\numberwithin{equation}{section}
\def\beq{\begin{eqnarray}}
\def\eeq{\end{eqnarray}}
\def\beqs{\begin{eqnarray*}}
\def\eeqs{\end{eqnarray*}}
\def\mz{{\mathbb Z}}
\def\mc{{\mathbb C}}
\def\mr{{\mathbb R}}
\def\ca{{\mathcal A}}
\def\gr{{\rm Gr}}
\def\bem{\begin{array}{lll}}
\def\eem{\end{array}}
\def\dim{{\hbox{\rm dim}}}
\def\ad{{{\rm ad}}}
\newfont{\df}{eufm10}
\def\mc{{\mathbb C}}
\def\sa{\frak{a}}
\def\sb{\frak{b}}
\def\sg{\frak{g}}
\def\sp{\frak{p}}
\def\sn{\frak{n}}
\def\sh{\frak{h}}
\def\sss{\frak{s}}
\def\sf{\frak{f}}
\def\ssf{{\bf f}}
\def\ssx{{\bf x}}
\def\bh{{H}}
\def\sk{{\frak{k}}}
\def\sbo{\overline{\sb}}
\def\sno{\overline{\sn}}
\newtheorem{theo}{Theorem}[section]
\newtheorem{defi}[theo]{Definition}
\newtheorem{lemma}[theo]{Lemma}
\newtheorem{prop}[theo]{Proposition}
\newtheorem{remark}[theo]{Remark}
\title[]
{Whittaker modules and hyperbolic Toda lattices}
\author[L. Xia]{Limeng Xia}
\address{XIA: College of Mathematical Sciences, Jiangsu University, XueFu Road 301,
Zhenjiang 212013, Jiangsu,  PR China} \email{xialimeng@ujs.edu.cn}
\date{}
\begin{document}
\maketitle

\begin{abstract} Let $\sg$ be a complex finite-dimensional simple Lie algebra and let $\sg_l$ be the corresponding generalized Takiff algebra. This paper studies the affine variety $\ssf+\sb_l$ where $\ssf$ is similar to a principal nilpotent element of $\sg$ and $\sb_l$ is a subalgebra corresponding to the Borel subalgebra $\sb$ of $\sg$. Inspired by Kostant's work then we deal with two questions. One of them is to construct the Whittaker model for the $G_l$-invariants of symmetric algebra $S(\sg_l)$ where $G_l$ is the adjoint group of $\sg_l$ and $G_l$ acts on $S(\sg_l)$ by coadjoint action, and then to classify all nonsingular Whittaker modules over $\sg_l$. Another one is to describe the symplectic structure of the manifold $Z\subseteq\ssf+\sb_l$ of normalized Jacobi elements. Then the Hamiltonian corresponding to  a fundamental invariant provides a class of hyperbolic Toda lattices. In particular, a simplest example describes the state of a dynamical system consisting of a positive mass particle and a negative mass particle.
\end{abstract}

{\bf Keywords:}  Takiff algebras; Whittaker modules; simple modules; Hyperbolic Toda lattices; negative mass

{\it  MSC (2020)}: 17B08; 17B10; 17B35; 17B63; 17B80; 37J35

\smallskip\bigskip

\section{Introduction}
This paper deals with a representation question as to which irreducible representations of a Takiff algebra have a Whittaker model, and with a question involving some Toda lattices which describes  the dynamical systems consisting of  both positive mass articles and negative mass particles. These Toda lattices attached to Takiff algebras were also studied by M. Lau in the very recent publication \cite{Lau}.

\subsection{} In his original paper Kostant suggested the so-called {\it Whittaker model} of the center $Z(\sg)$ of the universal enveloping algebra $U(\sg)$ of a finite-dimensional complex semi-simple Lie algebra $\sg$ in \cite{Kos3} in 1978. He studied the variety $\sf+\sb$ where $\sf$ is a principal nilpotent element and $\sb$ is the Borel subalgebra. This variety induces a translated adjoint action of $N$ on $\sb$ and hence a translated coadjoint action on $\sb_l$, where $N$ is the Lie subgroup corresponding to the nilradical $\sn$ of $\sb$. This translated coadjoint action determines an isomorphism between the $G$-invariants $S(\sg)^G$ and the $N$-invariants $S(\sbo)^N$. Then one constructs the Whittaker model. As a main result in the representation theory then Kostant classified all nonsingular Whittaker modules over $\sg$.

A more general definition of a Whittaker module for an algebra structure $\mathcal{A}$ with a triangular decomposition $\mathcal{A}=\mathcal{A}^-\mathcal{A}^0\mathcal{A}^+$ is then an  $\mathcal{A}$-module induced from a one-dimensional $\mathcal{A}^+$-module which is determined by some character $\mathcal{A}^+\rightarrow\mc$.

In the recent decades many authors have focused on the Whittaker models and Whittaker modules over various algebras including Lie algebras, Lie super algebras, vertex operator algebras  and quantum groups.  For example, Adamovi\'{c}-L\"{u}-Zhao studied  Whittaker modules for the affine Lie algebra of type $A^{(1)}$ in \cite{ALZ}. Christodoulopoulou studied the so-called imaginary Whittaker modules for affine Lie algebras induced by the Whittaker modules for infinite-dimensional Heisenberg algebras in \cite{C}. Hartwig-Yu studied  Whittaker modules over free bosonic orbifold vertex operator algebras in \cite{HY}. Liu-Pei-Xia studied Whittaker modules for the super-Virasoro algebras in \cite{LPX}. Sevostyanov studied  Whittaker modules over topological quantum groups in \cite{S} and Xia-Zhao studied twisted Whittaker modules over rational quantum groups of type $A$ in \cite{XZ}. Romanov developed a geometric algorithm for computing the composition multiplicities of stantard Whittaker modules in \cite{Rom}.

\subsection{}

In 1979 by considering the coadjoint action of $\overline{B}$ on the variety $\sf+\sb$ Kostant studied the symplectic structure of the manifold $Z\subseteq\sf+\sb$ of normalized Jacobi elements in \cite{Kos5}(also see \cite{Kos4}), where $\overline{B}$ is the Lie subgroup corresponding to the opposite Borel subalgebra $\sbo$. Let $\kappa$ be a positive multiple of the Killing form of $\sg$. The invariant $I=\frac12\kappa$ then produces a generalized Toda lattice which is related to the Dynkin diagram of $\sg$. Of course the Toda lattices relative to Lie algebras have attracted many attentions. For example, one is referred to  \cite{LNW} and \cite{N}.

In this paper, the Toda lattice means a finite non-periodic Toda lattice. A Toda lattice is a Hamiltonian system describing particles on a line which was first introduced by Toda in \cite{Toda}. The Hamiltonian of  a system of $n$ particles moving on a line so that the particles have masses ${\bf m}_i>0, i=1,\cdots,n$ usually is written as
\beqs H=\sum_{i=1}^n\frac{p_i^2}{2{\bf m}_i}+U,\eeqs
where $U$ is the potential function, then the state of this system can be determined by a Hamilton's equation. Usually this problem was studied by a symplectic structure and the Hamiltonian defines a bilinear form  on certain real space.

In theoretical physics, the possibility of having negative-mass was already present from Dirac established his famous equation in \cite{D}. Of course many physicists have been concerned with problems involving negative-mass matters, for example see \cite{PdA}.

Considering a simplest Hamiltonian system which consists of a positive mass particle and a negative mass particle, then one can assume
\beq\label{h-example-1}\bh=\frac{(p_+)^2}{2}-\frac{(p_-)^2}{2}+U(q_+,q_-)\eeq
so that
\beq\label{h-example-2}\frac{\partial\bh}{\partial p_\pm}=\frac{dq_{\pm}}{dt},\;\frac{\partial\bh}{\partial q_\pm}=-\frac{dp_{\pm}}{dt}.\eeq
If we assume $p=(p_++p_-)/2, \overline{p}=(p_+-p_-)/2, q=q_++q_-, \overline{q}=q_+-q_-$, then one has
\beq\label{h-example-3} &&\bh=2p\overline{p}+U,\\
\label{h-example-4}&&\frac{\partial\bh}{\partial p}=\frac{d{\overline{q}}}{dt},\;\frac{\partial\bh}{\partial \overline{p}}=\frac{d{q}}{dt},\;\frac{\partial\bh}{\partial q}=-\frac{d{p}}{dt},\;\frac{\partial\bh}{\partial \overline{q}}=-\frac{d\overline{p}}{dt}.\eeq

Clearly the corresponding symplectic structure induces a non-degenerate symmetric bilinear form on $\mr q_++\mr q_-$ determined by
\beqs (q_\pm,q_\pm)=\pm1, (q_\pm,q_\mp)=0.\eeqs
Then one has
\beqs (q,q)=(\overline{q},\overline{q})=0, (q,\overline{q})=2.\eeqs

In general, if a system contains both positive mass particles and negative mass particles, it is impossible to determine the corresponding bilinear form on the position-phase space $\sum_{i=1}^n\mr q_i$ by a multiple (even negative multiple) of the Killing form of some   semi-simple Lie algebra.

\subsection{} Suppose that $\sg$ is a finite-dimensional complex simple Lie algebra of rank $n$ and $\sg_l=\sg\otimes\mc[t]/(t^{l+1})$ is the Takiff algebra. The Takiff algebra is an important non-semi-simple object of finite-dimensional Lie algebras. Of course lots of articles provides many results in its representation theory. For more details one is referred to \cite{CKR}, \cite{CC}, \cite{MM} and \cite{W} and references therein.

Let $Z(\sg_l)$ be the center of the universal enveloping algebra $U(\sg_l)$ of $\sg_l$. For any $U(\sg_l)$-module $V$ let $U(\sg_l)_V$ denote the annihilator of $V$ in $U(\sg_l)$ and put $Z(\sg_l)_V=Z(\sg_l)\cap U(\sg_l)_V$. One says that $V$ admits an infinitesimal character if $Z(\sg_l)_V$ is a maximal ideal of $Z(\sg_l)$. Clearly $V$ admits an infinitesimal character if and only if there exists a character $\xi:Z(\sg_l)\rightarrow\mc$ such that $Z(\sg_l)_V={\rm Ker}\xi$.

For any subspace $\frak{a}\subseteq\sg$ let $\frak{a}_l$ denote the subspace $\frak{a}\otimes\mc[t]/(t^{l+1})\subseteq\sg_l$. Let $\sh$ be a Cartan subalgebra of $\sg$ and let $\sb, \sbo$ respectively be the positive Borel subalgebra and the opposite Borel subalgebra. Then both $\sn=[\sb,\sb]$ and $\overline{\sn}=[\sbo,\sbo]$ are maximal nilpotent subalgebras. In particular one has a triangular decomposition
\beqs\sg_l=\overline{\sn}_l\oplus\sh_l\oplus\sn_l.\eeqs

If $0\not=v\in V$ and $\eta:\sn_l\rightarrow\mc$ is a character such that
\beqs xv=\eta(x)v,\;\forall x\in\sn_l,\eeqs
the element $v$ is called a Whittaker vector (of type $\eta$). If the module $V$ is cyclically generated by a Whittaker vector, it is called a Whittaker module.

Let $\mc_{\eta}$ be the one-dimensional $U(\sn_l)$-module defined by the character $\eta$ of $\sn_l$ and  let $\mc_{\eta,\xi}$ be the one-dimensional $Z(\sg_l)\otimes U(\sn_l)$-module defined by the respective characters $\xi$ and $\eta$ of $Z(\sg_l)$ and $\sn_l$. Considering the induced modules
\beqs &&W_\eta=U(\sg_l)\otimes_{U(\sn_l)}\mc_\eta,\\
&&Y_{\eta,\xi}=U(\sg_l)\otimes_{Z(\sg_l)\otimes U(\sn_l)}\mc_{\eta,\xi}.\eeqs

We prove that there exists a one-one correspondence between the set of all submodules of $W_\eta$ and the set of all ideals of the center $Z(\sg_l)$. In particular, the modules $Y_{\eta,\xi}$ ($\eta$ nonsingular) classify all simple Whittaker modules up to equivalence. These results generalize   Kostant's corresponding results from $\sg_l$ to Takiff algebra $\sg_l$.

\subsection{}

It is known that $\sg_l$ has a non-degenerate invariant symmetric bilinear form $Q$ induced by the Killing form of $\sg$. Due to this bilinear form, the symmetric algebra $S(\sg_l)$ of $\sg_l$ can be regarded as the algebra of polynomial functions on $\sg_l$ and then $S(\sg_l)$ has a Poisson structure. Moreover the restriction of this structure to $S(\sbo_l)$ coincides with the Poisson structure of $S(\sbo_l)$ as the algebra of polynomial functions on $\sb_l$. Let $G_l$ be the adjoint group of $\sg_l$ and let $\overline{B}_l$ be the Lie subgroup  corresponding to $\sbo_l$ then $\overline{B}_l$ acts on $\sb_l$ by the coadjoint action. 
Considering the manifold $Z\subseteq \ssf+\sb_l$ of normalized Jacobi elements, then there exists a modified version of the coadjoint action of $\overline{B}_l$ on $Z$ so that $Z$ is a $\overline{B}_l$-orbit in $\ssf+\sb_l$, and hence  $Z$ has a coadjoint-symplectic theory of $\overline{B}_l$ carried from $\sb_l$.

In particular one determines the symplectic structure of $Z$ by its a global ordinate system. Let $I=\frac12Q$, one obtains a Hamiltonian corresponding to $I$, which is of hyperbolic type. In the case $l=1$ we give a simple expression for the $2$-form $\omega_Z$ of $Z$. The Hamiltonian corresponding to $I$ then is clearly of hyperbolic type. The associative systems are completely integrable and referred to the hyperbolic Toda lattices.

Finally, we give an example in the case $n=1,l=1$. In this case one obtains a hyperbolic differential operator on $\mr\times\mr^+$ and an explicit example for \eqref{h-example-3}-\eqref{h-example-4} from the Hamiltonian corresponding to $I=\frac12Q$. In particular, one shows that the dynamical system determined by the Hamiltonian in this example has a unique global smooth solution for any initial values under some bounded condition.

\subsection{} Throughout this paper, we  always let $\mc, \mc^*, \mr, \mr^*, \mz, \mathbb{N}$ denote the set of complex numbers, the set of nonzero complex numbers, the set of real numbers, the set of nonzero real numbers, the set of integers and the set of non-negative integers, respectively.

In Sections 2-5 and Subsection 6.3 Lie algebras $\sg$ and $\sg_l$ are defined over ground field $\mc$. In Sections 6-7 (except Subsection 6.3) the same Lie algebras are regarded as Lie algebras over ground field $\mr$.

\section{Preliminaries}

\subsection{}For any finite-dimensional simple complex Lie algebra $\sg$  let $G$ denote the adjoint group of $\sg$. Let $ax\in\sg_l$ denote the adjoint action of $a$ on $x$ for any $a\in G$ and $x\in\sg$.

Fixing  $\kappa$ as a positive multiple of the Killing form of $\sg$,  then via $\kappa$ one can identify $\sg$ as its dual space $\sg^*=\hom_\mc(\sg,\mc)$ by $x\mapsto \kappa_x$ where $\kappa_x(y)=(x,y)$ for all $y\in\sg$. It is convenient to  write $x(y)=\kappa_x(y)$ without confusion. The action of $G$ on $\sg$ extends to $S(\sg)$ as a group of automorphisms. Let $S(\sg)$ be the symmetric algebra of $\sg$ and let $S_k(\sg)$ denote the homogeneous space of degree $k$.

Let $S(\sg)^G$ be the algebra of $G$-invariants in $S(\sg)$. By Chevally's theorem there are homogeneous elements $I_j\in S_{m_j+1}(\sg)$  such that
\beq\label{center-g} S(\sg)^G=\mc[I_1,\cdots,I_n]\eeq
is a polynomial algebra in the $I_j$. Where the $m_j$ are called the exponents of $G$ (one is referred to \S 4.4 in \cite{Kos2}).

Let $\sh\subseteq\sg$ be a Cartan subalgebra and let $\Pi=\{\alpha_1,\cdots,\alpha_n\}$ be a prime root system. Then the root system $\Delta$ and the positive root system $\Delta_+$ are uniquely determined. The Cartan subalgebra $\sh$ has a basis $\{\omega_1,\cdots,\omega_n\}$ such that $\alpha_j(\omega_i)=\delta_{i,j}$ and another basis $\{h_1,\cdots,h_n\}$ such that $\alpha_j(h_i)=c_{i,j}$ where
\beq\label{cartan-matrix} C=(c_{i,j})_{n\times n}\eeq
is the Cartan matrix of $\sg$.

Associate a root vector $e_\alpha$ to each root $\alpha\in\Delta$. Then
\beqs \sb=\sh+\sum_{\alpha\in\Delta_+}\mc e_\alpha,\; \sbo=\sh+\sum_{\alpha\in\Delta_+}\mc e_{-\alpha}\eeqs
are the associated Borel subalgebra and the opposite Borel subalgebra, respectively. They have their respective nilradicals
\beqs \sn=[\sb,\sb],\;\sno=[\sbo,\sbo].\eeqs

Let $J$ be the number of positive roots. We arrange the positive roots as $\{\beta_i|1\leq i\leq J\}$ such that $\beta_i=\alpha_i$ for any $1\leq i\leq n$.


\subsection{} We define the elements
\beqs \sf=\sum_{i=1}^n e_{-\alpha_i},\;x_0=\sum_{i=1}^n\omega_i.
\eeqs

Clearly $x_0$ is the unique element in $\sh$ such that $\alpha_i(x_0)=1$ for all $1\leq i\leq n$. Let $\sg_{[i]}$ be the eigenspace for $\ad x_0$ corresponding to the eigenvalue $i$, then
\beq\label{grading-g}\sg=\bigoplus_i\sg_{[i]}\eeq
and $[\sg_{[i]},\sg_{[j]}]\subseteq \sg_{[i+j]}$. In particular $\sf\in\sg_{[-1]}$ and $[\sf,\sn]\subseteq\sb$.

Noting that both $\sb$ and $[\sf,\sn]$ are stable under the action of $\ad x_0$. Since $\ad x_0$ is semi-simple there exists an $\ad x_0$-stable subspace $\sss\subseteq\sb$ such that
\beq\label{b=fn+s}\sb=[\sf,\sn]\oplus\sss\eeq
is a direct sum (see (1.1.5) in \cite{Kos3}).

Let $N\subseteq G$ be the (unipotent) Lie subgroup corresponding to $\sn$. Regarding $N, \sf+\sss$ and $\sf+\sb$ as affine varieties then one has
\begin{theo}[Theorem 1.2 in \cite{Kos3}]\label{variety-iso-1} The map
\beqs N\times(\sf+\sss)\rightarrow\sb\eeqs
given by $(a,x)\mapsto ax$ is an isomorphism of affine varieties.
\end{theo}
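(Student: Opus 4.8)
The plan is to exhibit an explicit regular inverse of $\psi\colon(a,x)\mapsto ax$, built degree by degree with respect to the grading \eqref{grading-g}. First I would record the linear-algebra input. Since $\sf$ is a principal nilpotent element, $\sg^{\sf}=\ker(\ad\sf)$ is $\ad x_0$-stable and concentrated in negative degrees (the $-m_j$), hence $\sg^{\sf}\subseteq\sno$; in particular $\ad\sf$ is injective on $\sn$, and being homogeneous of degree $-1$ it restricts for each $d\geq 1$ to a linear isomorphism $\ad\sf\colon\sg_{[d]}\xrightarrow{\ \sim\ }\sg_{[d-1]}\cap[\sf,\sn]$. Consequently $\ad\sf\colon\sn\to[\sf,\sn]$ is an isomorphism, so by \eqref{b=fn+s}
\beqs \dim\big(N\times(\sf+\sss)\big)=\dim\sn+\dim\sss=J+\big(\dim\sb-J\big)=\dim\sb ,\eeqs
and, via $\exp\colon\sn\xrightarrow{\ \sim\ }N$, both sides are affine spaces of this common dimension. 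Because $[\sf,\sn]\subseteq\sb$ and $[\sn,\sb]\subseteq\sn$, the group $N$ preserves the affine variety $\sf+\sb$, so $\psi$ is valued in $\sf+\sb$, which we identify with $\sb$ by translation as in \cite{Kos3}.

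The heart of the matter is the claim that \emph{for every $z\in\sf+\sb$ there is a unique $a\in N$ with $a^{-1}z\in\sf+\sss$, and $a$ depends polynomially on $z$.} Write $z=\sf+y$ with $y\in\sb=\bigoplus_{j\geq0}\sg_{[j]}$, put $a=\exp X$ with $X=\sum_{d\geq1}X_d$, $X_d\in\sg_{[d]}$, and let $\pi\colon\sb\to[\sf,\sn]$ be the graded projection along $\sss$. Expanding $e^{-\ad X}z-\sf=\sum_{k\geq0}\tfrac{(-1)^k}{k!}(\ad X)^k(\sf+y)-\sf$ and collecting $\ad x_0$-homogeneous parts, one checks that in degree $d-1$ the only monomial involving $X_d$ is the linear term $-[X_d,\sf]$ (coming from $k=1$), every other contribution being $y_{d-1}$ or a bracket monomial in $X_1,\dots,X_{d-1}$ and $y$; thus the degree-$(d-1)$ component equals $y_{d-1}-[X_d,\sf]+\Phi_{d-1}(X_1,\dots,X_{d-1};y)$ for a fixed polynomial map $\Phi_{d-1}$. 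Hence the condition $\pi(e^{-\ad X}z-\sf)=0$ is solved recursively: at stage $d$ it reduces to $[X_d,\sf]=\pi\big(y_{d-1}+\Phi_{d-1}(X_1,\dots,X_{d-1};y)\big)\in\sg_{[d-1]}\cap[\sf,\sn]$, which by the isomorphism of the first paragraph has a unique solution $X_d\in\sg_{[d]}$, visibly polynomial in $z$ and in $X_1,\dots,X_{d-1}$. The recursion terminates after finitely many steps since $[\sf,\sn]$ is supported in finitely many degrees, and thereafter the remaining homogeneous parts of $e^{-\ad X}z-\sf$ lie in $\ker\pi=\sss$. This proves the claim; in particular $\psi$ is surjective and $z\mapsto\big(a(z),\,a(z)^{-1}z\big)$, with $a(z)=\exp X(z)$, is a regular map $\sf+\sb\to N\times(\sf+\sss)$.

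To finish I would observe that this regular map is a two-sided inverse of $\psi$. Uniqueness of $a$ in the claim — forced at each stage by the injectivity of $\ad\sf$ on $\sn$ — already gives injectivity of $\psi$: if $a_1x_1=a_2x_2=:z$ with $a_i\in N$, $x_i\in\sf+\sss$, then $a_i^{-1}z=x_i\in\sf+\sss$, so $a_1=a_2$ and then $x_1=x_2$. Composing $z\mapsto\big(a(z),a(z)^{-1}z\big)$ with $\psi$ returns $z$ by construction; composing $\psi$ with it sends $(a_0,x_0)$ to $(a_0,x_0)$ because $a_0^{-1}(a_0x_0)=x_0\in\sf+\sss$ and $a(z)$ is the unique such element. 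Hence $\psi$ is an isomorphism of affine varieties. (Alternatively one could note that $\psi$ is a bijective morphism onto the smooth, hence normal, variety $\sf+\sb$ over $\mc$ and invoke Zariski's Main Theorem, but the construction above already supplies the inverse.)

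I expect the only genuine work to be the bookkeeping in the recursion: confirming that, after expanding $e^{-\ad X}z$, the sole occurrence of $X_d$ in the degree-$(d-1)$ component is the term $-[X_d,\sf]$ — so that stage $d$ truly isolates $X_d$ — and that the correction $\Phi_{d-1}$ is polynomial, not merely rational, in the lower data, which is what makes the inverse regular rather than birational. Everything else is either the classical structure theory of the principal nilpotent, used only through $\sg^{\sf}\subseteq\sno$ and the splitting \eqref{b=fn+s}, or formal manipulation.
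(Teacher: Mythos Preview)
Your proof is correct, but there is nothing in the paper to compare it against: Theorem~\ref{variety-iso-1} is quoted verbatim from Kostant \cite{Kos3} and the paper gives no proof of it. What the paper \emph{does} prove is the Takiff generalisation, Theorem~\ref{variety-iso-2}, and your argument is essentially the same mechanism specialised to $l=0$: one exploits that $\ad\sf$ is injective on $\sn$ (hence an isomorphism onto $[\sf,\sn]$) and climbs the $\ad x_0$-grading one step at a time. The paper organises this as separate injectivity and surjectivity arguments --- first showing that $a(\ssf+x)=\ssf+x'$ with $x,x'\in\sss_l$ forces $a=e$ and $x=x'$ by peeling off the lowest nonzero graded piece of $\log a$, then showing that any $\ssf+y$ can be moved into $\ssf+\sss_l$ by successively killing the lowest $[\ssf,\sn_l]$-component of $y$ --- whereas you package both halves into a single recursion that directly produces a regular inverse $z\mapsto(a(z),a(z)^{-1}z)$. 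Your formulation has the advantage of making regularity of the inverse explicit rather than leaving it implicit in a bijectivity-plus-normality argument; the paper's version is more hands-on and is written so as to adapt immediately to the filtered Takiff setting, where one inducts first on the $t$-degree and then on the $\ad x_0$-degree.
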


\begin{remark}The element $\sf$ is a principal nilpotent element in the notation of \cite{Kos2}. In fact, an element $\sf=\sum_{i=1}^nc_ie_{-\alpha_i}$ is principal nilpotent if and only if $c_i\not=0$ for all $c_i$. In this case \eqref{b=fn+s} always holds for some $\ad x_0$-stable subspace $\sss\subseteq\sb$ and Theorem \ref{variety-iso-1} is also true.
\end{remark}


\subsection{} For any positive integer $l$ the truncated polynomial current Lie algebra $\sg_l$ is defined to be the tensor product $\sg\otimes\ca_l$, where $\ca_l=\mc[t]/\mc[t]t^{l+1}$. Such algebras were studied in the case $l=1$ by S. J. Takiff in \cite{T} and then the Lie algebra $\sg_l$ is called the generalized Takiff algebra (see \cite{M}).

For convenience, for any subspace $S\subseteq\sg$ and any element $x\in\sg$, we denote by
\beqs S_l=S\otimes\ca,\;S(k)=S\otimes t^k,\;S[k]=S\otimes t^k\ca_l,\;x(k)=x\otimes t^k.\eeqs

There exists a unique non-degenerate invariant bilinear form $Q$ on $\sg_l$ defined by
\beqs Q(x(i),y(j))=\delta_{i+j,l}\kappa(x,y),\quad \forall x,y\in\sg, 0\leq i,j\leq l.\eeqs
We identify $\sg_l$ as its dual space via $x\mapsto Q_x=Q(x,-)$. Let $S(\sg_l)$ be the symmetric algebra of $\sg_l$ and $S_k(\sg_l)$ denotes the homogeneous space of degree $k$.

Let $G_l$ denote the adjoint group of $\sg_l$. For any $a\in G_l$ and $x\in\sg_l$, $ax\in\sg_l$ denotes the adjoint action of $a$ on $x$. The action of $G_l$ on $\sg_l$ extends to $S(\sg_l)$ as a group of automorphisms. Let $S(\sg_l)^{G_l}$ be the subalgebra of $G_l$-invariants in $S(\sg_l)$.

For any $x\in\sg$ let $\overline{x}=\sum_{i=0}^lx(i)z^i\in S(\sg_l)[z]$ in a variable $z$. We may write $I_i=\sum_{x}x_1\cdots x_{m_i+1}$, let
\beq\label{invariants-S(g-l)} I_i(z)=\sum_{j=0}^{il}I_{i,j}z^{il-j}=\sum_{x}\overline{x_1}\cdots \overline{x_{m_i+1}}.\eeq
Then $I_{i,j}\in S_{m_i+1}(\sg_l)^{G_l}$ for $0\leq j\leq l$. In particular, the subalgebra $S(\sg_l)^{G_l}$ is a polynomial algebra in the homogeneous $G_l$-invariants
\beq\label{center-gl}I_{i,j},\;1\leq i\leq n, 0\leq j\leq l.\eeq
For more details on the $G_l$-invariants one is referred to \cite{M}, \cite{RT} and also \cite{T}.

\section{A Decomposition result of $S(\sbo_l)$}

\subsection{} For any $0\leq j\leq l$ fixing a principal nilpotent element $\sf_j\in\sg_{[-1]}$. We define elements
\beq\label{fl-xl} \ssf=\sum_{j=0}^l\sf_j(j),\;\ssx=\sum_{i=0}^n\omega_i(0).\eeq
Let $\sss\subseteq \sb$ be an $\ad x_0$-stable subspace such that $\sb=[\sf_0,\sn]\oplus\sss$ is a direct sum.

For each $j$, obviously the subspace ${\sg_{[j]}}_l$ is the eigenspace for $\ad\ssx$ corresponding to the eigenvalue $j$ and one has
\beq\sg_l=\bigoplus_j{\sg_{[j]}}_l.\eeq
In particular one has $\ssf\in{\sg_{[-1]}}_l$, $\sss_l$ is $\ad\ssx$-stable.
\begin{lemma}\label{direct-sum}One has
\beq\sb_l=[\ssf,\sn_l]+\sss_l\eeq
and the sum is direct.
\end{lemma}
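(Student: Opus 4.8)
The plan is to reduce the statement to the classical fact $\sb = [\sf_0,\sn]\oplus\sss$ (from \eqref{b=fn+s}, with $\sf_0$ principal nilpotent) by exploiting the grading of $\sg_l$ by powers of $t$. Write $\ssf = \sum_{j=0}^l \sf_j(j)$ and note that for $y = \sum_k y_k(k)\in\sn_l$ we have $[\ssf,y] = \sum_m \bigl(\sum_{j+k=m}[\sf_j,y_k]\bigr)(m)$, so the $t^m$-component of $[\ssf,\sn_l]$ is $\sum_{j+k=m}[\sf_j,\sn(k)]$. First I would check the claimed sum spans $\sb_l$: arguing by induction on $m$ from $0$ up to $l$, the leading contribution to the $t^m$-component is $[\sf_0,\sn(m)]$, and since $[\sf_0,\sn]+\sss = \sb$, modulo lower-order-in-$t$ corrections (the terms $[\sf_j,\sn(m-j)]$ with $j\ge 1$, which lie in the $t^m$-slot but involve $\sn(k)$ with $k<m$ that the induction has already handled) together with $\sss(m)$ we recover all of $\sb(m)$. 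Collecting over $m=0,\dots,l$ gives $[\ssf,\sn_l]+\sss_l = \sb_l$.

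Next I would establish directness. The cleanest route is a dimension count: $\dim\sn_l = (l+1)\dim\sn$ and, since $\ad\ssf$ restricted to $\sn_l$ is injective — indeed $\ad\sf_0$ is injective on $\sn$ because $\sf_0$ is principal nilpotent and $[\sf_0,\cdot]\colon\sn\to\sb$ is injective by \eqref{b=fn+s}, and the $t$-leading-term argument upgrades this to injectivity of $\ad\ssf$ on $\sn_l$ — we get $\dim[\ssf,\sn_l] = (l+1)\dim\sn$. Also $\dim\sss_l = (l+1)\dim\sss = (l+1)(\dim\sb - \dim\sn)$, so $\dim[\ssf,\sn_l] + \dim\sss_l = (l+1)\dim\sb = \dim\sb_l$. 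Combined with the spanning statement, the sum must be direct.

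Alternatively, and perhaps more transparently, I would prove directness directly using the $\ad\ssx$-grading: both $[\ssf,\sn_l]$ and $\sss_l$ are $\ad\ssx$-stable (since $\ssf\in{\sg_{[-1]}}_l$ and $\ssx$ grading-preserving, $[\ssf,\sn_l]\subseteq\sb_l$ is graded, and $\sss_l$ is graded by hypothesis), so it suffices to check directness in each $\ad\ssx$-eigenspace; there one runs the same induction on the power of $t$, peeling off the leading $t$-term and invoking $[\sf_0,\sn]\cap\sss = 0$ in the appropriate graded piece of $\sb$. Either way, the one genuine subtlety — the main obstacle — is handling the cross terms $[\sf_j,\sn(k)]$ with $j\ge 1$: they sit in the same $t$-degree as $[\sf_0,\sn(\,\cdot\,)]$-terms from a strictly lower layer, so one must be careful to organize the induction so that these contributions have already been absorbed. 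The dimension-count version sidesteps this bookkeeping entirely, at the cost of needing injectivity of $\ad\ssf$ on $\sn_l$, which itself is proved by the same leading-term observation.
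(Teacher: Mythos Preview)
Your proposal is correct and follows essentially the same strategy as the paper: both reduce to the classical decomposition $\sb=[\sf_0,\sn]\oplus\sss$ by filtering according to powers of $t$, using that $[\ssf,\sn(k)]\equiv[\sf_0,\sn](k)$ modulo the remaining layers (the paper inducts from $k=l$ downward, you from $m=0$ upward, which is cosmetic). Your dimension count for directness, via injectivity of $\ad\ssf|_{\sn_l}$, makes explicit a point the paper's proof leaves implicit.
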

\begin{proof}Clearly one has
\beqs &&\sb[l]=\sb(l)=[\sf_0,\sn](l)+\sss(l)=[\ssf,\sn(l)]+\sss(l),\\
&&\sb[l-1]=[\sf_0,\sn](l-1)+\sss(l-1)+\sb(l)=[\ssf,\sn(l-1)]+\sss(l-1)+\sb[l],\eeqs
and \beqs
&&\sb[k]=\sb(k)+\sb[k+1]=[\sf_0,\sn](k)+\sss(k)+\sb[k+1]=[\ssf,\sn(k)]+\sss(k)+\sb[k+1],
\eeqs
for all $k\geq0$. Then this lemma follows from the direct sum
\beqs \sb_l=\sum_{j=0}^l\sb(j)\eeqs
since
\beqs \sb_l=\sum_{j=0}^l\sb(j)=\sum_{j=0}^l\left([\ssf,\sn(j)]+\sss(j)\right)=[\ssf,\sn_l]+\sss_l.\eeqs
\end{proof}

Let $N_l\subseteq G_l$ be the (unipotent) Lie subgroup corresponding to $\sn_l$. Regarding $N_l, \ssf+\sss_l$ and $\ssf+\sb_l$ as affine varieties then one has
\begin{theo}\label{variety-iso-2} The map
\beq\label{variety-iso-eq-2} N_l\times(\ssf+\sss_l)\rightarrow\ssf+\sb_l\eeq
given by $(a,x)\mapsto ax$ is an isomorphism of affine varieties.
\end{theo}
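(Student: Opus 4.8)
The plan is to mimic the strategy used by Kostant for Theorem~\ref{variety-iso-1}, reducing the statement to a graded, polynomial-map question that can be settled by checking that the differential of the map is an isomorphism everywhere together with a properness/degree argument. First I would note that both $N_l\times(\ssf+\sss_l)$ and $\ssf+\sb_l$ are affine spaces of the same dimension: since $N_l$ is unipotent with Lie algebra $\sn_l$, the exponential map identifies $N_l$ with $\sn_l$ as a variety, so the source has dimension $\dim\sn_l+\dim\sss_l$, and by Lemma~\ref{direct-sum} this equals $\dim[\ssf,\sn_l]+\dim\sss_l=\dim\sb_l$ (using that $\ad\ssf$ is injective on $\sn_l$, which is exactly what the directness of the sum in Lemma~\ref{direct-sum} gives). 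So the map in \eqref{variety-iso-eq-2} is a morphism between smooth irreducible affine varieties of equal dimension.

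Next I would compute the differential of $(a,x)\mapsto ax$ at a point $(a,x)$. Using left translation to trivialize $TN_l\cong\sn_l$ and the obvious identification $T_x(\ssf+\sss_l)\cong\sss_l$, the differential sends $(\xi,v)\in\sn_l\oplus\sss_l$ to $a\cdot\bigl([\xi,x]+v\bigr)$. Since the adjoint action of $a\in G_l$ is invertible, surjectivity/injectivity of the differential reduces to showing that $[\sn_l,x]+\sss_l=\sb_l$ for every $x\in\ssf+\sss_l$. For $x=\ssf$ this is precisely Lemma~\ref{direct-sum}; for general $x=\ssf+s$ with $s\in\sss_l$ one argues by the $\ad\ssx$-grading: the correction term $[\sn_l,s]$ lies in strictly higher $\ad\ssx$-degree than the leading term $[\sn_l,\ssf]$ (because $\ssf\in(\sg_{[-1]})_l$ while $s\in\sb_l$ sits in non-negative degrees), so a filtration/triangularity argument shows that $[\sn_l,\ssf+s]+\sss_l$ still has the full dimension $\dim\sb_l$, hence equals $\sb_l$. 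Therefore the morphism is étale everywhere.

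To upgrade "étale and dominant" to "isomorphism" I would exhibit an inverse, or equivalently show the map is bijective and then invoke Zariski's main theorem (an étale bijective morphism of varieties, the target being normal, is an isomorphism). Bijectivity can be obtained by an explicit descending induction on the power of $t$: writing everything with respect to the decomposition $\sg_l=\bigoplus_{j=0}^l\sg(j)$ and the $\ad\ssx$-grading, the top-degree-in-$t$ component of the equation $ax=b$ (for given $b\in\sb_l$, solving for $a\in N_l$, $x\in\ssf+\sss_l$) reduces to the classical statement of Theorem~\ref{variety-iso-1} for $\sg$ (with the principal nilpotent $\sf_l$), and then each lower $t$-degree is determined uniquely by solving an affine-linear equation whose linear part is the isomorphism $\sn\oplus\sss\xrightarrow{\sim}\sb$, $(\xi,v)\mapsto[\sf_0,\xi]+v$. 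This gives both existence and uniqueness of $(a,x)$, i.e.\ bijectivity, and the inverse is manifestly a morphism (it is built from polynomial operations and the inverses of fixed linear isomorphisms), so \eqref{variety-iso-eq-2} is an isomorphism of affine varieties.

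I expect the main obstacle to be the bookkeeping in the inductive step: one must simultaneously track the $t$-adic filtration (degrees $0,\dots,l$) and the $\ad\ssx$-grading \eqref{grading-g}, and verify that at each stage the "off-diagonal" contributions coming from lower-order terms of $a$ and from $\sss_l$ do not destroy the triangularity that makes the linear part invertible. Everything else is a routine transcription of Kostant's argument, but getting the two gradings to interact cleanly — and in particular checking that $N_l$ can be peeled off one $t$-layer at a time — is where the real care is needed.
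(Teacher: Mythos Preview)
Your overall strategy is sound and the \'etale computation via the $\ad\ssx$-grading is correct, but the inductive argument for bijectivity has the direction reversed. Because $[\sg(i),\sg(j)]\subseteq\sg(i+j)$ in $\sg_l$, the $t^l$-component of $a(\ssf+s)$ receives contributions from \emph{every} $z_j$ (writing $a=\exp\sum_j z_j(j)$), so it does not isolate a classical Kostant problem. It is the $t^0$-component that depends on $z_0$ alone: it equals $\exp(\ad z_0)(\sf_0+s_0)$, and \emph{this} is what reduces to Theorem~\ref{variety-iso-1}, for the principal nilpotent $\sf_0$ rather than $\sf_l$. The induction must therefore be ascending in the power of $t$. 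At step $k$, after conjugating away the already-determined $\exp(z_0(0))$, the equation for $(z_k,s_k)$ is affine-linear with linear part $(\xi,v)\mapsto[\sf_0+s_0,\xi]+v$ (not just $[\sf_0,\xi]+v$), and the same triangularity argument you used for the differential shows this is an isomorphism $\sn\oplus\sss\to\sb$. This also explains the remark following the theorem in the paper that only $\sf_0$ need be principal nilpotent.

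With that correction your route is close to the paper's but organized differently. The paper establishes injectivity by exactly this ascending $t$-induction (first showing stabilizers in $N_l$ are trivial, then that distinct points of $\ssf+\sss_l$ lie in distinct orbits), but handles surjectivity purely via the $\ad\ssx$-grading: given $y\in\sb_l$ it peels off the lowest $\ad\ssx$-degree component of $y$ lying in $[\ssf,\sn_l]$ by acting with a single $\exp z$ and iterates. No differential computation and no appeal to Zariski's main theorem appear. Your \'etale-plus-bijective packaging is slightly heavier but internally coherent once the induction direction is fixed; note too that the Zariski step then becomes superfluous, since the corrected inductive construction already produces a polynomial inverse.
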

\begin{proof}
First we show that $a(\ssf+x)=\ssf+x$ for $x\in\sb_l$ and $a\in\sn_l$ implies that $a$ is the identity.

We may write $a=\exp(z)$ for some $z=\sum_{j\geq0}z_j(j)$ and  $x=\sum_{j}x_j(j)$ where $z_j\in\sn, x_j\in\sb$. Then one has
\beqs \ssf+x=a(\ssf+x)\in \exp(z_0(0))(\sf(0)+x_0(0))+\sum_{j>0}\sg(j)\eeqs
and hence $z_0=0$ by Theorem \ref{variety-iso-1}. Assume that $z=\sum_{j\geq 1}z_j(j)$. Then  one has
\beqs \ssf+x=a(\ssf+x)\in f_0(0)+x_0(0)+f_1(1)+x_1(1)+[z_1(1),\sf_0(0)+x_0(0)]+\sum_{j>1}\sg(j)\eeqs
and hence $z_1=0$. By induction in the similar way one has $z=0$ and $a$ is the identity.

Next we show that $a(\ssf+x)=(\ssf+x')$ for some $x, x'\in\sss_l$ and $a\in\sn_l$ implies that $x=x'$ and  $a$ is the identity. We write $a=\exp(z)$ for some $z=\sum_{j\geq0}z_j(j), x=\sum_{j\geq0}x_j(j), x'=\sum_{j\geq0}x_j'(j)$ where $z_j\in\sn, x_j, x_j'\in\sss$. Then
\beqs \ssf+x'=a(\ssf+x)\in \exp(z_0(0))(\sf_0(0)+x_0(0))+\sum_{j>0}\sg(j).\eeqs
Let $N(0)\subseteq G_l$ be the Lie subgroup corresponding to the subalgebra $\sn(0)$. Then by Theorem \ref{variety-iso-1}, $\ssf+x_0'(0)\in N(0)(\ssf+x_0(0))$ if and only if $x_0=x_0'$. In particular it also implies $z_0=0$. Now let $i$ be minimal such that $z_{i}\not=0$ then one has
\beqs \ssf+x'=a(\ssf+x)\in \ssf+x+[z_i(i),\sf_0(0)+x_0(0)]+\sum_{j>i}\sg(j)\eeqs
and hence $z_i=0$. So $z=0$ and $a$ is the identity. Of course one has $x=x'$.

Up to now we have proved that the map is injective. Finally we show that the map is surjective.  For each $y\in\sb_l$, \ there exists some element $x\in\sss_l$ such that $\ssf+y\in N_l(\ssf+x)$. By Lemma \ref{direct-sum}, we may assume
\beqs y=\sum_{i\geq0}y_i+\sum_{i\geq0}s_i,\eeqs
where $y_i\in[\ssf,\sn_l]\cap{\sg_{[i]}}_l, s_i\in\sss_l\cap{\sg_{[i]}}_l$ are uniquely determined. Noting that $\dim[\ssf,\sn_l]=\dim\sb_l-\dim\sss_l=\dim\sn_l$. Let $i_0$ be the minimal such that $y_{i_0}\not=0$.  Then there exists a unique element $z\in\sn_l\cap{\sg_{[i_0+1]}}_l$ such that $y_{i_0}=[\ssf,z]$ and hence
\beqs (\ad z)(\ssf+y)\in\ssf+y-y_{i_0}+\sum_{i>i_0}{\sg_{[i]}}_l.\eeqs
Let $y'=(\ad z)(\ssf+y)$, then one has
\beqs y'=\sum_{i\geq i_0+1}y_i'+\sum_{i\geq0}s'_i\eeqs
where $y_i'\in[\ssf,\sn_l]\cap{\sg_{[i]}}_l, s_i'\in\sss_l\cap{\sg_{[i]}}_l$. By induction on $i_0$, one can obtain some element $y''\in\sss_l$ such that $y$ belongs to the $N_l$-orbit of $y''$ and hence the map \eqref{variety-iso-2} is surjective.
\end{proof}

\begin{remark}In the proof progress, one knows that Theorem \ref{variety-iso-2} only requires $\sf_0$ is principal nilpotent in $\sg$. But we focus on the case that all $\sf_j$ are principal nilpotent for two aims in the future. One aim is to associate $\ssf$ to a nonsingular character of $\sn_l$. Another aim is to apply it to the manifold $Z$ of the normalized Jacobi elements.
\end{remark}

\subsection{}

The stability of $\ssf+\sb_l$ under $N_l$ induces an action of $N_l$ on $\sb_l$, which is defined by
\beqs a\cdot x=a(\ssf+x)-\ssf,\;\forall a\in N_l, x\in\sb_l.\eeqs
By Theorem \ref{variety-iso-2}, the map
\beq\label{variety-times-iso} N_l\times\sss_l\rightarrow\sb_l,\;(a,x)\mapsto a\cdot x\eeq
is an isomorphism. Noting that $a\cdot(a'\cdot x)=a\cdot(a'(\ssf+x)-\ssf)=a(a'(\ssf+x))-\ssf=(aa')\cdot x$, then $N_l$ acts as a group of automorphisms of the affine variety $\sb_l$.
Since $S(\sbo_l)$ is the affine algebra of $\sb_l$ this action of $N_l$ induces an $N_l$-module structure on $S(\sbo_l)$, the induced action is given as
\beqs (a\cdot u)(x)=u(a^{-1}\cdot x),\;\forall a\in N_l, u\in S(\sbo_l), x\in\sb_l.\eeqs
Of course $N_l$ operates as a group of automorphisms of $S(\sbo_l)$ and hence the space $S(\sbo_l)^{N_l}$ of $N_l$-invariants in $S(\sbo_l)$ is a subalgebra of $S(\sbo_l)$.

For any $u\in S(\sg_l)$ let $u^{\ssf}\in S(\sbo_l)$ defined by
\beqs u^{\ssf}(x)=u(\ssf+x),\;\forall x\in\sbo_l.\eeqs
Then there exists an algebra homomorphism
\beqs\rho_{\ssf}: S(\sg_l)\rightarrow S(\sbo_l),\;u\mapsto u^{\ssf}.\eeqs

 \begin{lemma}\label{invar-iso-1}For any $I\in S(\sg_l)^{G_l}$ one has $I^{\ssf}\in S(\sbo_l)^{N_l}$.\end{lemma}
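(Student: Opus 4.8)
The plan is to exploit the $G_l$-invariance of $I$ together with the definition of the $N_l$-action on $\sb_l$ via the translated adjoint action on $\ssf+\sb_l$. The key observation is that the $N_l$-action $a\cdot x = a(\ssf+x)-\ssf$ on $\sb_l$ is, up to the translation by $\ssf$, just the restriction of the genuine adjoint action of $N_l\subseteq G_l$ on $\sg_l$, and $I$ is invariant under that adjoint action. First I would fix $I\in S(\sg_l)^{G_l}$, an element $a\in N_l$, and $x\in\sbo_l$; I want to show $(a\cdot I^{\ssf})(x) = I^{\ssf}(x)$. Unwinding the definitions, $(a\cdot I^{\ssf})(x) = I^{\ssf}(a^{-1}\cdot x) = I\big(\ssf + (a^{-1}\cdot x)\big) = I\big(a^{-1}(\ssf+x)\big)$, where I have used $a^{-1}\cdot x = a^{-1}(\ssf+x)-\ssf$ and the definition of $u^{\ssf}$.

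The one subtlety here is that the action of $N_l$ on $S(\sbo_l)$ comes from the action on $\sb_l$, which is identified with $\sbo_l$ via the form $Q$; so I should be careful that $I^{\ssf}$ is genuinely a polynomial function on $\sb_l$ (i.e.\ an element of $S(\sbo_l)$) and that evaluating it at $x\in\sbo_l$ means evaluating $I$ at $\ssf+x\in\ssf+\sb_l$. This is exactly how $\rho_{\ssf}$ and the pairing were set up in the preceding paragraph, so no new work is needed: $I^{\ssf}\in S(\sbo_l)$ by definition of $\rho_{\ssf}$, and the evaluation identity is immediate. Then, because $I\in S(\sg_l)^{G_l}$ and $a^{-1}\in N_l\subseteq G_l$ acts on $S(\sg_l)$ through its adjoint action, we have $I\big(a^{-1}(\ssf+x)\big) = (a\cdot I)(\ssf+x) = I(\ssf+x) = I^{\ssf}(x)$, where $a\cdot I = I$ is precisely $G_l$-invariance. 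Combining, $(a\cdot I^{\ssf})(x) = I^{\ssf}(x)$ for all $a\in N_l$ and all $x\in\sbo_l$, so $I^{\ssf}\in S(\sbo_l)^{N_l}$.

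I do not expect a genuine obstacle in this argument; it is essentially a bookkeeping check that the translated $N_l$-action on $\sb_l$ restricts the honest adjoint action on $\sg_l$, after which $G_l$-invariance does all the work. The only place to be mildly careful is the direction of the action: the induced action on functions uses $a^{-1}$, and the adjoint action of $G_l$ on $S(\sg_l)$ must be normalized so that $(g\cdot u)(y) = u(g^{-1}y)$ — this is consistent with the conventions already in use in the excerpt, so the two $a^{-1}$'s match up and the argument closes cleanly. If one wanted to be fully rigorous about the identification via $Q$, one would also note that $Q$ is $G_l$-invariant, so the dual ($= \rho_{\ssf}$-type) identifications are compatible with the group actions; but this is automatic from the non-degenerate invariance of $Q$ recorded earlier.
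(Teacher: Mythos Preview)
Your argument is correct and is essentially identical to the paper's own proof: both compute $(a\cdot I^{\ssf})(x)=I^{\ssf}(a^{-1}\cdot x)=I(a^{-1}(\ssf+x))=(aI)(\ssf+x)=I(\ssf+x)=I^{\ssf}(x)$ directly from the definitions and $G_l$-invariance. The only cosmetic point is that $x$ should range over $\sb_l$ (since $S(\sbo_l)$ is the affine algebra of $\sb_l$), not $\sbo_l$ as you wrote; the paper's definition contains the same slip, and your subtlety paragraph already flags and resolves this.
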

\begin{proof} For any $I\in S(\sg_l)^{G_l}, x\in\sb_l$ and $a\in N_l$ one has
\beqs (a\cdot I^{\ssf})(x)=I^{\ssf}(a^{-1}\cdot x)=I(a^{-1}({\ssf}+x))=(aI)({\ssf}+x)=I({\ssf}+x)=I^{\ssf}(x),\eeqs
it follows $I^{\ssf}\in S(\sbo_l)^{N_l}$.
\end{proof}

\subsection{} For any $x\in\sg_l$ the extension of ${\rm ad}x$ to a derivation of $S(\sg_l)$ is still denoted by ${\rm ad}x$. In particular, the restriction of ${\rm ad}(-x_0)$ to $\sbo_l$ has non-negative integral eigenvalues and hence the same is true of its restriction to $S(\sbo_l)$.  Let $(S_k(\sbo_l))_j$ be the eigenspace in $S_k(\sbo_l)$ for ${\rm ad}(-x_0)$ corresponding to the eigenvalue $j$ and put
\beq\label{direct-sum-S} S_{(i)}(\sbo_l)=\bigoplus_{k+j=i}(S_k(\sbo_l))_j.\eeq
It is clear  that $S_{(i)}(\sbo_l)S_{(j)}(\sbo_l)\subseteq S_{(i+j)}(\sbo_l)$ and hence the $S_{(i)}(\sbo_l)$ define the structure of a graded algebra on $S(\sbo_l)$. It is clearly that $S_{(0)}(\sbo_l)=\mc1$, where $1$ is the identity of $S(\sbo_l)$.

\begin{theo}\label{invar-iso-graded}
One  has that $S(\sbo_l)^{N_l}$ is a graded subalgebra of $S(\sbo_l)$ with respect to the $x_0$-grading. In fact for any $k$ one has
\beqs\rho_\ssf: S_k(\sg_l)^{G_l}\rightarrow S_{(k)}(\sbo_l)^{N_l}.\eeqs
\end{theo}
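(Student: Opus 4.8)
The plan is to show that for each $k$, the map $\rho_\ssf$ sends $S_k(\sg_l)^{G_l}$ into $S_{(k)}(\sbo_l)^{N_l}$; the $N_l$-invariance is already Lemma \ref{invar-iso-1}, so the real content is the homogeneity with respect to the $x_0$-grading defined in \eqref{direct-sum-S}. First I would recall that $\ssf = \sum_j \sf_j(j)$ lies in ${\sg_{[-1]}}_l$, i.e.\ $\ssf$ is an eigenvector of $\ad\ssx$ with eigenvalue $-1$; here I want to work with the derivation $\ad(-x_0)$ (which on $\sg_l$ is just $-\ad x_0$ acting on the $\sg$-factor, extended as a derivation of the symmetric algebra), noting that $\ad x_0$ and $\ad\ssx = \ad x_0(0)$ agree on the relevant gradings. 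A homogeneous $I \in S_k(\sg_l)^{G_l}$ is itself an eigenvector of $\ad(-x_0)$, say of eigenvalue $d$, precisely because $x_0 \in \sh \subseteq \sg_l$ gives an inner derivation and $G_l$-invariance forces $I$ to be a weight vector for this one-dimensional torus; in fact one can pin down $d$ from the explicit description \eqref{invariants-S(g-l)} of the generators $I_{i,j}$.

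The key computational step is the following. Write $I = \sum_\alpha c_\alpha m_\alpha$ as a sum of monomials $m_\alpha = x_{\alpha,1}\cdots x_{\alpha,k}$ in weight vectors of $\sg_l$ (for $\ad\ssx$), and evaluate $I^\ssf(x) = I(\ssf + x)$ for $x \in \sbo_l$. Each factor $x_{\alpha,r}$, viewed as a linear functional via $Q$, when applied to $\ssf + x$ picks out the component of $\ssf + x$ in the weight space dual to the weight of $x_{\alpha,r}$. Since $\ssf$ has $\ad\ssx$-weight $-1$ and $x \in \sbo_l$ has weights $\leq 0$, a careful bookkeeping of weights versus polynomial degrees shows that each monomial contribution to $I^\ssf$ lands in a single graded piece $S_{(k)}(\sbo_l)$. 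Concretely: if $I$ is homogeneous of degree $k$ and $\ad(-x_0)$-eigenvalue $d$, then I expect to show $I^\ssf$ is homogeneous in the sense of \eqref{direct-sum-S} with $k + j = $ some fixed value; the shift by $\ssf$ contributes exactly the ``missing'' grading because $\ssf$ sits in degree $-1$ for $\ad\ssx$, and substituting a degree-$(-1)$ element for some of the variables trades polynomial degree for $x_0$-weight at a one-to-one rate. The cleanest way to see the precise target is to test on the $I_{i,j}$ directly: since $\sum_i I_i$ accounts for all of $S(\sg)^G$ and Kostant's classical result (which is the $l = 0$ case) identifies $I_i^{\sf}$ with an element of $S_{(m_i+1)}(\sbo)^N$, the Takiff version follows by tracking the extra $z$-variable grading in \eqref{invariants-S(g-l)} and the $t$-grading on $\sg_l$.

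The main obstacle I anticipate is getting the grading arithmetic exactly right, i.e.\ verifying that the target is $S_{(k)}(\sbo_l)$ \emph{with the same $k$} as the source degree, rather than some shifted $S_{(k')}(\sbo_l)$. This requires being careful about two separate gradings that are easy to conflate: the polynomial degree $k$ on $S(\sg_l)$ versus $S(\sbo_l)$ (which the map $\rho_\ssf$ does \emph{not} preserve, since $\ssf$ is not in $\sbo_l$), and the $\ad(-x_0)$-weight. The resolution is to note that $\rho_\ssf$ is filtered: a degree-$k$ element maps to something of polynomial degree $\leq k$ in $S(\sbo_l)$, with the degree-$(k-p)$ part arising from substituting $\ssf$ (weight $-1$) in $p$ of the slots, so it carries an extra $\ad(-x_0)$-weight of $+p$; combined with the fact that $I$ itself is an $\ad(-x_0)$-eigenvector whose eigenvalue is dictated by $\sg$-theory to be exactly such that $k + j$ stays constant and equal to $k$, the pieces all land in $S_{(k)}(\sbo_l)$. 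I would verify the eigenvalue claim by checking it on the generators $I_{i,j}$ using \eqref{invariants-S(g-l)} and the classical $l=0$ statement, then extend multiplicatively since both $\rho_\ssf$ and the grading are algebra homomorphisms/gradings. Finally, $N_l$-invariance of the image is immediate from Lemma \ref{invar-iso-1}, and the fact that $S(\sbo_l)^{N_l}$ is a graded subalgebra follows formally once we know the generators $I_{i,j}^\ssf$ are homogeneous and (by a dimension or separation-of-invariants argument as in \cite{Kos3}) generate it.
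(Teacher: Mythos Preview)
Your approach is correct in outline and close in spirit to the paper's, but the paper carries out the computation more concretely and you leave one key fact implicit.

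The paper observes that $S(\sg_l)^{G_l}\subseteq S(\sg_l)^{H(0)}$ and shows directly that $\rho_\ssf$ sends all of $S_k(\sg_l)^{H(0)}$ into $S_{(k)}(\sbo_l)$, using an explicit monomial basis $e(p,m,q)$ of the $H(0)$-invariants (indexed by exponent matrices with $\langle p\rangle=\langle q\rangle$). Evaluating $e(p,m,q)^\ssf$: factors $e_{\beta_i}(j)$ with $i>n$ kill the term, simple-root factors become scalars, and what remains is a multiple of $e(p,m,0)\in (S_{|p|+|m|}(\sbo_l))_{\langle p\rangle(x_0)}$; the constraint $\langle p\rangle=\langle q\rangle$ together with $\langle q\rangle(x_0)=|q|$ forces $|p|+|m|+\langle p\rangle(x_0)=k$. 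This is exactly your weight-versus-degree bookkeeping, made precise via the basis.

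Your version is the same idea, but you never pin down the eigenvalue: your $d$ is simply $0$, since $G_l$-invariance (indeed mere $H(0)$-invariance) means $\ad x_0(0)$ annihilates $I$. Without stating $d=0$ your arithmetic does not close. Also, the phrase ``substituting $\ssf$ in $p$ of the slots'' is misleading: there is no choice of slots; the linear functionals coming from $\sn_l$ vanish on $\sb_l$ and only see $\ssf$, while those from $\sbo_l$ only see $x$. Your fallback---check homogeneity on the generators $I_{i,j}$ via \eqref{invariants-S(g-l)} and extend multiplicatively, using that $\rho_\ssf$ is an algebra map and the $\ssx$-grading is multiplicative---is valid and in fact a slight shortcut over the paper, which proves the stronger statement for all of $S_k(\sg_l)^{H(0)}$. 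Finally, note that your last sentence (gradedness of $S(\sbo_l)^{N_l}$ via the generators) appeals to the surjectivity of Theorem~\ref{invar-iso-2}, which comes later; the paper proceeds the same way---the present proof only establishes the displayed map, and gradedness is completed once surjectivity is known.
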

\begin{proof}
Let $H(0)\subseteq G_l$ be the subgroup associated to $\sh(0)$ and let $S(\sg_l)^{H(0)}$ be the algebra of $H(0)$-invariant in $S(\sg_l)$. Clearly one has $S(\sg_l)^{G_l}\subseteq S(\sg_l)^{H(0)}$. By Lemma \ref{invar-iso-1} it suffices to show that $\rho_\ssf$ maps $S_k(\sg_l)^{H(0)}$ into $S_{(k)}(\sbo_l)$.

For any matrices $p=(p_{i,j})_{J\times(l+1)}, q=(q_{i,j})_{J\times(l+1)}$ and $m=(m_{i,j})_{n\times(l+1)}$, let $|p|=\sum_{i,j}p_{i,j}, |q|=\sum_{i,j}q_{i,j}, |m|=\sum_{i,j}m_{i,j}$ and
$\langle p\rangle=\sum_{i,j}p_{i,j}\beta_i, \langle q\rangle=\sum_{i,j}q_{i,j}\beta_i$. We define the following elements
\beqs e(p,m,q)=\left(\prod_{i,j}e_{-\beta_i}(j)^{p_{i,j}}\right)\left(\prod_{i,j}\omega_i(j)^{m_{i,j}}\right)\left(\prod_{i,j}e_{\beta_i}(j)^{q_{i,j}}\right).\eeqs
Clearly $S_k(\sg_l)^{H(0)}$ has a basis $\{e(p,m,q)\mid \langle p\rangle=\langle q\rangle\}$.

First one has $e(p,m,0)\in (S_{|p|+|m|}(\sbo))_j$ where $j=\langle p\rangle(x_0)$. Thus $e(p,m,0)\in S_{(t)}(\sbo)$ where $t=|p|+|m|+\langle p\rangle(x_0)$. If $q_{i,j}>0$ for some pair $(i,j)$ with $i>n$ then one has
\beqs e(p,m,q)^{\ssf}=0\eeqs
and the result holds for this case. If $q_{i,j}=0$ for all $i>n$, then $\langle q\rangle(x_0)=|q|$ and
\beqs e(p,m,q)^{\ssf}=\left(\prod_{i=1}^n\prod_{j=0}^l(Q(e_{\alpha_i}(l-j), e_{-\alpha_i}(j)))^{q_{i,j}}\right)e(p,m,0)\in S_{(t)}(\sbo_l).\eeqs
Noting that $\langle p\rangle=\langle q\rangle$, then one has $t=|p|+|m|+|q|=k$ and hence
\beqs e(p,m,q)^{\ssf}\in S_{(k)}(\sbo_l).\eeqs
The proof is finished.\end{proof}

\subsection{} Now we can assert the isomorphism between $S(\sg_l)^{G_l}$ and $S(\sbo_l)^{N_l}$.

 \begin{theo}\label{invar-iso-2}For any $I\in S(\sg_l)^{G_l}$ one has $I^{\ssf}\in S(\sbo_l)^{N_l}$. Moreover the map
 \beq\label{S(g)=iso=S(b)-2} S(\sg_l)^{G_l}\rightarrow S(\sbo_l)^{N_l},\;I\mapsto I^{\ssf}\eeq
 is an algebra isomorphism. In particular,
 \beqs S(\sbo_l)^{N_l}=\mc[{I_{i,j}}^{\ssf}: 1\leq i\leq n, 0\leq j\leq l]\eeqs
is a polynomial algebra in the ${I_{i,j}}^{\ssf}$. \end{theo}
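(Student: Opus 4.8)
The plan is to follow Kostant's strategy from \cite{Kos3} but now organized around the $\ssx$-grading on $\sg_l$ and the induced $x_0$-grading on $S(\sbo_l)$. The key structural input is the isomorphism of affine varieties $N_l\times(\ssf+\sss_l)\to\ssf+\sb_l$ from Theorem \ref{variety-iso-2}, which dually gives the isomorphism of algebras $S(\sbo_l)\cong S(\sss_l^{*})\otimes(\text{functions on }N_l)$, and in particular identifies $S(\sbo_l)^{N_l}$ with the coordinate ring of the slice $\ssf+\sss_l$. So the first step is to make precise the composite $S(\sg_l)^{G_l}\xrightarrow{\rho_\ssf} S(\sbo_l)^{N_l}\xrightarrow{\text{restrict}} \mc[\ssf+\sss_l]$, where the restriction map is an isomorphism by Theorem \ref{variety-iso-2}; the content of the theorem is then that $\rho_\ssf$ itself is already an isomorphism onto $S(\sbo_l)^{N_l}$, i.e.\ that restricting an invariant to $\ssf+\sb_l$ loses no information and the image is all of $S(\sbo_l)^{N_l}$.

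Injectivity of $\rho_\ssf$ on $S(\sg_l)^{G_l}$: I would argue that $G_l\cdot(\ssf+\sb_l)$ is Zariski dense in $\sg_l$. Indeed, $\sf_0$ being principal nilpotent, $G\cdot(\sf_0+\sb)$ is dense in $\sg$ (this is classical and underlies \cite{Kos3}), and then a filtration/induction on the powers of $t$ — exactly the kind of induction used in the proof of Theorem \ref{variety-iso-2} — upgrades this to density of $G_l\cdot(\ssf+\sb_l)$ in $\sg_l$. A $G_l$-invariant polynomial that vanishes on $\ssf+\sb_l$ therefore vanishes on a dense set, hence is zero; this gives injectivity. (Alternatively one can run the same argument more cheaply using just that the two sides have equal Krull dimension, $n(l+1)$, once surjectivity is in hand.)

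Surjectivity: this is where the graded refinement does the real work, and it is the step I expect to be the main obstacle. By Theorem \ref{invar-iso-graded}, $\rho_\ssf$ carries $S_k(\sg_l)^{G_l}$ into the single graded piece $S_{(k)}(\sbo_l)^{N_l}$, so it is enough to show $\rho_\ssf$ is surjective in each $x_0$-degree. Here I would count dimensions: the elements $I_{i,j}$ of \eqref{center-gl} are algebraically independent generators of $S(\sg_l)^{G_l}$ (by the cited results of \cite{M},\cite{RT},\cite{T}), with $\deg I_{i,j}=m_i+1$; and by Theorem \ref{variety-iso-2} the ring $S(\sbo_l)^{N_l}\cong\mc[\ssf+\sss_l]$ is a polynomial ring in $\dim\sss_l=\dim\sss\cdot(l+1)=n(l+1)$ variables, matching the number of the $I_{i,j}$. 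So the algebra map \eqref{S(g)=iso=S(b)-2} is a map between polynomial rings in $n(l+1)$ generators. It now suffices to check that the images $I_{i,j}^{\ssf}$ are algebraically independent: an injective endomorphism-type map of polynomial rings of the same (finite) transcendence degree, with the source freely generated, forces surjectivity once one knows the target is integral over the image — and integrality follows because $\mc[\ssf+\sss_l]$ is a finitely generated module over the subalgebra generated by the $I_{i,j}^\ssf$, which in turn follows from injectivity of $\rho_\ssf$ together with the grading (each graded piece is finite-dimensional and the degree-zero piece is just $\mc$, so a graded version of Nakayama/the Hilbert–Serre argument applies). The algebraic independence of the $I_{i,j}^\ssf$ I would get by transporting the algebraic independence of the $I_{i,j}$ through the injectivity already proved, or directly from the explicit "leading term" computation in the proof of Theorem \ref{invar-iso-graded}, which shows $e(p,m,q)^\ssf$ is a nonzero scalar multiple of $e(p,m,0)$ and hence that $\rho_\ssf$ does not collapse the relevant monomials. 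Assembling: $\rho_\ssf$ is an injective algebra map between polynomial rings on $n(l+1)$ generators whose image contains a system of $n(l+1)$ algebraically independent elements over which the target is integral, hence it is an isomorphism, and the $I_{i,j}^\ssf$ are a free polynomial generating set for $S(\sbo_l)^{N_l}$.
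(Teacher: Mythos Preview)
Your surjectivity argument has a genuine gap. The claim that ``an injective map of polynomial rings of the same transcendence degree forces surjectivity once the target is integral over the image'' is false: $\mc[x^2]\hookrightarrow\mc[x]$ is integral and the rings have the same transcendence degree, but the inclusion is not surjective. More seriously, the step before it --- that $\mc[\ssf+\sss_l]$ is a finitely generated module over the image of $\rho_\ssf$ ``from injectivity of $\rho_\ssf$ together with the grading'' --- is not justified and does not follow from those hypotheses alone. A graded injection $A\hookrightarrow B$ of connected polynomial $\mc$-algebras in the same number of variables need not make $B$ module-finite over $A$ (take $A=\mc[x,xz]\hookrightarrow B=\mc[x,z]$ with the standard grading; $z$ is not integral over $A$). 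So nothing in your outline actually forces the image of $\rho_\ssf$ to be all of $S(\sbo_l)^{N_l}$.

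The paper closes this gap by an explicit computation rather than abstract commutative algebra. Using the isomorphism $S(\sbo_l)^{N_l}\xrightarrow{\sim}S(\sss_l^*)$, $v\mapsto v|_{\sss_l}$ (from Theorem~\ref{variety-iso-2}), it shows directly that the restrictions ${I_{i,j}}^{\ssf}|_{\sss_l}$ form a \emph{coordinate system} on $\sss_l$. This is done by induction on $k$: the base case $k=0$ is exactly Kostant's Theorem~7 in \cite{Kos2} for $\sg$, and for the inductive step one checks that ${I_{i,k}}^{\ssf}|_{\sss(k)}$ has the same shape as the $k=0$ expression (because only the $\sf_0$-factors survive when one restricts the ``new'' variable to $\sss(k)$), so that the ${I_{i,k}}^{\ssf}|_{\sss(k)}$ are again a coordinate system on $\sss(k)$, modulo terms already controlled on $\sss[k-1]$. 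This single computation yields both surjectivity (the images generate $\mc[\sss_l]$) and injectivity (they are algebraically independent); no density argument for $G_l\cdot(\ssf+\sb_l)$ is needed. If you want to salvage your dimension-counting approach, the honest route is a Hilbert series comparison: show the restriction isomorphism $S(\sbo_l)^{N_l}\to S(\sss_l^*)$ is graded for the $\ssx$-grading (using that $\sss_l$ is $\ad\ssx$-stable), then invoke Kostant's description of $\sss$ as an $\ad x_0$-module to see that the linear coordinates on $\sss_l$ sit in $x_0$-degrees $m_i+1$, so both sides have Hilbert series $\prod_i(1-t^{m_i+1})^{-(l+1)}$ and graded injectivity gives surjectivity. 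But that still rests on the same Kostant input the paper uses; the purely formal shortcut you propose does not exist.
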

\begin{proof} By Lemma \ref{invar-iso-1}, the map is well defined. By the isomorphism \eqref{variety-times-iso}, the restriction
\beqs S(\sbo_l)^{N_l}\rightarrow S({\sss_l}^*),\;v\mapsto v|_{\sss_l}\eeqs
is an algebra isomorphism.

Now we consider the restricted functions ${I_{i,j}}^\ssf|_{\sss_l}$ corresponding to the $G_l$-invariants $I_{i,j}$. Noting that $\sss\subseteq\sn$. Then one has
\beqs I_i(z)^\ssf|_{\sss_l}&=&\sum_{\gamma\in\Delta_+\atop \gamma=\alpha_{i_1}+\cdots+\alpha_{i_{m_i}}} c_\gamma (\overline{e_{-\gamma}}\cdot\overline{e_{\alpha_{i_1}}}\cdot...\cdot\overline{e_{\alpha_{i_{m_i}}}})^\ssf|_{\sss_l}\\
&&+\sum_{t>1, \gamma_j\in\Delta_+\atop \gamma_1+\cdots+\gamma_t=\alpha_{i_1}+\cdots+\alpha_{i_{m_i+1-t}}} c_{\gamma_1,\cdots,\gamma_t} (\overline{e_{-\gamma_1}}\cdot...\overline{e_{-\gamma_t}}\cdot\overline{e_{\alpha_{i_1}}}\cdot...\cdot\overline{e_{\alpha_{i_{m_i+1-t}}}})^\ssf|_{\sss_l}\eeqs
for some constants $c_{\gamma}, c_{\gamma_1,\cdots,\gamma_t}$.

 Let $G(0)\subseteq G_l$ be the subgroup corresponding to $\sg(0)$.  Clearly one has
\beqs \mc[I_{1,0},\cdots, I_{n,0}]=S(\sg(0)^*)^{G(0)}\eeqs
and ${I_{i,0}}^\ssf|_{\sss_l}={I_{i,0}}^\ssf|_{\sss(0)}$. By Theorem 7 in \cite{Kos2} the functions ${I_{i,0}}^\ssf|_{\sss(0)}$ is a coordinate system on $\sss(0)$. This implies that
\beqs s_{1,0},\cdots,s_{n,0}\eeqs
is a coordinate system on $\sss(0)$, where
\beqs s_{i,0}&=&\sum_{\gamma\in\Delta_+\atop \gamma=\alpha_{i_1}+\cdots+\alpha_{i_{m_i}}} c_\gamma ({e_{-\gamma}}(l)\cdot {e_{\alpha_{i_1}}(l)}\cdot...\cdot{e_{\alpha_{i_{m_i}}}(l)})^\ssf|_{\sss(0)}\\
&=&\sum_{\gamma\in\Delta_+\atop \gamma=\alpha_{i_1}+\cdots+\alpha_{i_{m_i}}} c_\gamma\left(\prod_{r=1}^{m_i}\kappa(\sf_0,e_{\alpha_{i_r}})\right){e_{-\gamma}}(l)|_{\sss(0)}
\eeqs

 In general for $k\geq 1$ we assume that the functions
\beqs {I_{i,j}}^{\ssf}|_{\sss_l}={I_{i,j}}^{\ssf}|_{\sss[k-1]}, 1\leq i\leq n, 0\leq j\leq k-1\eeqs
is a coordinate system on $\sss[k-1]$. By the definition  one has
\beqs I_{i,k}\in \sg(k)^*S_{m_i-1}(\sg(0)^*)\oplus S(\sg[k-1]^*),\eeqs
thus ${I_{i,k}}^{\ssf}|_{\sss_l}\in{I_{i,k}}^{\ssf}|_{\sss(k)}+S(\sss[k-1]^*)$. Explicitly, one has
\beqs {I_{i,k}}^{\ssf}|_{\sss(k)}&=&\sum_{\gamma\in\Delta_+\atop \gamma=\alpha_{i_1}+\cdots+\alpha_{i_{m_i}}} c_\gamma\left(\prod_{r=1}^{m_i}\kappa(\sf_0,e_{\alpha_{i_r}})\right){e_{-\gamma}}(l-k)|_{\sss(k)}.\eeqs
and hence
\beqs {I_{i,k}}^{\ssf}(x(k))={s_{i,0}}(x(0))\eeqs
for all $x\in\sss$. Thus the functions ${I_{i,k}}^{\ssf}|_{\sss(k)}, 1\leq i\leq n$ is a coordinate system on $\sss(k)$. Furthermore, the functions
\beqs {I_{i,j}}^{\ssf}|_{\sss_l}={I_{i,j}}^{\ssf}|_{\sss[k]}, 1\leq i\leq n, 0\leq j\leq k\eeqs
is a coordinate system on $\sss[k]$. By induction on $k$, one knows that the functions
\beqs {I_{i,j}}^{\ssf}|_{\sss_l}, 1\leq i\leq n, 0\leq j\leq l\eeqs
is a coordinate system on $\sss_l$.  This implies that the map  (\eqref{S(g)=iso=S(b)-2}) is surjective. Because the functions ${I_{i,j}}^{\ssf}$ are algebraically independent, the map (\eqref{S(g)=iso=S(b)-2}) is necessarily injective. Thus it is an isomorphism.
\end{proof}

\begin{theo}\label{S(g)-G-S(b)-N}
For any $k$ the map
\beqs\rho_\ssf: S_k(\sg_l)^{G_l}\rightarrow S_{(k)}(\sbo_l)^{N_l}\eeqs
is an isomorphism.\end{theo}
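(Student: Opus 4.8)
The plan is to deduce Theorem~\ref{S(g)-G-S(b)-N} by combining the global isomorphism of Theorem~\ref{invar-iso-2} with the grading compatibility already recorded in Theorem~\ref{invar-iso-graded}. By Theorem~\ref{invar-iso-2} the map $\rho_\ssf\colon S(\sg_l)^{G_l}\to S(\sbo_l)^{N_l}$ is an algebra isomorphism, and by Theorem~\ref{invar-iso-graded} it carries the homogeneous piece $S_k(\sg_l)^{G_l}$ into $S_{(k)}(\sbo_l)^{N_l}$ (the degree-$k$ piece of the $x_0$-grading of the invariants). So the only thing left to establish is that, for each fixed $k$, this inclusion is an equality $\rho_\ssf\big(S_k(\sg_l)^{G_l}\big)=S_{(k)}(\sbo_l)^{N_l}$.

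First I would note injectivity of each restricted map is immediate: it is the restriction of the global injection $\rho_\ssf$ to a subspace. For surjectivity onto $S_{(k)}(\sbo_l)^{N_l}$, I would argue as follows. Since $S(\sg_l)^{G_l}$ is $\mz$-graded by degree and $\rho_\ssf$ is an isomorphism of algebras onto $S(\sbo_l)^{N_l}$, the decomposition $S(\sg_l)^{G_l}=\bigoplus_k S_k(\sg_l)^{G_l}$ transports to a decomposition $S(\sbo_l)^{N_l}=\bigoplus_k \rho_\ssf\big(S_k(\sg_l)^{G_l}\big)$. On the other hand, $S(\sbo_l)^{N_l}$ is a graded subalgebra for the $x_0$-grading (again by Theorem~\ref{invar-iso-graded}), so it also decomposes as $S(\sbo_l)^{N_l}=\bigoplus_i \big(S_{(i)}(\sbo_l)\cap S(\sbo_l)^{N_l}\big)$. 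Theorem~\ref{invar-iso-graded} says the first decomposition refines the second termwise, i.e. $\rho_\ssf\big(S_k(\sg_l)^{G_l}\big)\subseteq S_{(k)}(\sbo_l)^{N_l}$ for every $k$. Summing these inclusions and using that both families sum to the whole of $S(\sbo_l)^{N_l}$ forces each inclusion to be an equality: if one inclusion were proper, the total sum would be proper, a contradiction.

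Concretely, the cleanest way to write this is to observe that for each $k$ we have $\rho_\ssf\big(S_k(\sg_l)^{G_l}\big)\subseteq S_{(k)}(\sbo_l)^{N_l}$, and these subspaces of $S(\sbo_l)^{N_l}$ are in direct sum over $k$ on the left (inherited from $S(\sg_l)^{G_l}$) and the right-hand spaces $S_{(k)}(\sbo_l)^{N_l}$ are also in direct sum over $k$ inside $S(\sbo_l)^{N_l}$ (as $S(\sbo_l)^{N_l}$ is a graded subalgebra). Since $\bigoplus_k\rho_\ssf\big(S_k(\sg_l)^{G_l}\big)=S(\sbo_l)^{N_l}=\bigoplus_k S_{(k)}(\sbo_l)^{N_l}$ and the summands nest term-by-term, the nesting is an equality in each degree. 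Hence $\rho_\ssf\colon S_k(\sg_l)^{G_l}\to S_{(k)}(\sbo_l)^{N_l}$ is onto, and being a restriction of an injection, it is an isomorphism.

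The content is really all in the earlier theorems, so there is no serious obstacle; the one point to be careful about is the bookkeeping that $S(\sbo_l)^{N_l}$ is graded \emph{both} by the transported polynomial degree from $S(\sg_l)^{G_l}$ \emph{and} by the intrinsic $x_0$-grading $S_{(i)}(\sbo_l)$, and that Theorem~\ref{invar-iso-graded} precisely identifies these two gradings (degree $k$ going to $x_0$-degree $k$). Once that identification is in hand the equality in each degree is a formal consequence of two $\mz$-graded direct-sum decompositions of the same space that refine one another.
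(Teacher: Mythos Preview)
Your proposal is correct and matches the paper's approach exactly: the paper's proof of this theorem is the single line ``It follows from Theorem~\ref{invar-iso-graded} and Theorem~\ref{invar-iso-2},'' and what you have written is precisely the standard unpacking of that sentence. The only difference is that you spell out the elementary direct-sum bookkeeping (termwise inclusions of two $\mz$-gradings summing to the same space force termwise equalities), which the paper leaves implicit.
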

\begin{proof}It follows from Theorem \ref{invar-iso-graded} and Theorem \ref{invar-iso-2}.
\end{proof}

\subsection{}
Let $H_l, N_l$ and $B_l$ be the Lie subgroups of $G_l$ corresponding to Lie subalgebras $\sh_l, \sn_l$ and $\sb_l$, respectively. Then $H_l$ is abelian and $B_l$ is the semi-direct product $H_lN_l$. Now define an action of $B_l$ on $N_l$ by putting
\beqs b\cdot a=hnah^{-1}\eeqs
for all $b=hn\in B_l, a\in N_l$, where $h\in H_l, n\in N_l$.

Now $N_l$ is a unipotent algebraic group. Let $A(N_l)$ be the affine algebra of all regular functions on $N_l$. Then $B_l$ operates as a group of automorphisms on $A(N)$. Explicitly the action is defined by
\beqs (b\cdot \gamma)(a)=\gamma(b^{-1}\cdot a),\;\forall a\in B_l, \gamma\in A(N_l), a\in N_l. \eeqs

Noting that $A(N_l)\otimes S({\sss_l}^*)$ is the affine algebra of $N_l\times\sss_l$, by the isomorphism $N_l\times\sss_l\rightarrow\sb_l$ one has an algebra isomorphism
\beqs A(N_l)\otimes S({\sss_l}^*)\rightarrow S(\sbo_l).\eeqs

In particular for any element $\gamma\in A(N_l)$ there exists an element $v_\gamma\in S(\sbo_l)$ so that $v_\gamma(a\cdot x)=\gamma(a)$ for any $a\in N_l$ and $x\in\sss_l$.
Let $A\subseteq S(\sbo_l)$ be the space of all $v_\gamma, \gamma\in A(N_l)$, then $A$ is a subalgebra of $S(\sbo_l)$ and the map
\beqs A(N_l)\rightarrow A,\;\gamma\mapsto v_\gamma\eeqs
an isomorphism of algebras.

Noting that both $A(N_l)$ and $S(\sbo_l)$ are $N_l$-modules.

\begin{prop}\label{A-is-N-submodule-of-S(b)} $A$ is an $N_l$-submodule of $S(\sbo_l)$ and the map
\beq\label{A-A-n-mod-iso} A(N_l)\rightarrow A,\;\gamma\mapsto v_\gamma\eeq
is an isomorphism of $N_l$-modules.
\end{prop}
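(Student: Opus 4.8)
The plan is to verify directly that the $N_l$-action on $S(\sbo_l)$ preserves $A$ and that, under it, the algebra isomorphism $\gamma\mapsto v_\gamma$ of \eqref{A-A-n-mod-iso} is $N_l$-equivariant; since that map is already known to be an algebra isomorphism onto $A$, once equivariance is in place the stability of $A$ is automatic and the proposition follows.

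First I would fix $a'\in N_l$ and $\gamma\in A(N_l)$ and compute the function $a'\cdot v_\gamma$ on $\sb_l$ by evaluating it at points $a\cdot x$ with $a\in N_l$, $x\in\sss_l$; by Theorem \ref{variety-iso-2}, i.e.\ the isomorphism $N_l\times\sss_l\to\sb_l$ of \eqref{variety-times-iso}, every point of $\sb_l$ has this form, so these values determine $a'\cdot v_\gamma$ as an element of $S(\sbo_l)$. Using the relation $a\cdot(a'\cdot x)=(aa')\cdot x$ recorded after \eqref{variety-times-iso}, one has $(a')^{-1}\cdot(a\cdot x)=((a')^{-1}a)\cdot x$, and hence
\beqs (a'\cdot v_\gamma)(a\cdot x)=v_\gamma\bigl((a')^{-1}\cdot(a\cdot x)\bigr)=v_\gamma\bigl(((a')^{-1}a)\cdot x\bigr)=\gamma\bigl((a')^{-1}a\bigr),\eeqs
where the first equality is the definition of the induced $N_l$-action on $S(\sbo_l)$ and the last uses the defining property $v_\gamma(b\cdot x)=\gamma(b)$ for $b\in N_l$, $x\in\sss_l$.

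Next I would match this with $v_{a'\cdot\gamma}$. Restricting the $B_l$-action $b\cdot a=hnah^{-1}$ to $b=a'\in N_l$ (so $h=1$, $n=a'$) gives left translation $a'\cdot a=a'a$, whence $(a'\cdot\gamma)(a)=\gamma\bigl((a')^{-1}a\bigr)$ and therefore $v_{a'\cdot\gamma}(a\cdot x)=(a'\cdot\gamma)(a)=\gamma\bigl((a')^{-1}a\bigr)$. Comparing with the previous display, $a'\cdot v_\gamma$ and $v_{a'\cdot\gamma}$ agree at every point $a\cdot x$, hence agree as elements of $S(\sbo_l)$. Thus $a'\cdot v_\gamma=v_{a'\cdot\gamma}\in A$, so $A$ is an $N_l$-submodule of $S(\sbo_l)$, and the algebra isomorphism \eqref{A-A-n-mod-iso} intertwines the $N_l$-actions, hence is an isomorphism of $N_l$-modules.

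The computation is mostly bookkeeping; the one point to be careful about is keeping the several notions of ``action'' straight --- the $\ssf$-shifted $N_l$-action on $\sb_l$ and the induced one on $S(\sbo_l)$, versus the conjugation-type $B_l$-action on $A(N_l)$ restricted to $N_l$ --- and checking that these are compatible across the coordinate identification $A(N_l)\otimes S({\sss_l}^{*})\cong S(\sbo_l)$. No substantial obstacle is anticipated.
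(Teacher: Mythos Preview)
Your proposal is correct and is essentially identical to the paper's own proof: both compute $(a'\cdot v_\gamma)(a\cdot x)=\gamma((a')^{-1}a)=(a'\cdot\gamma)(a)=v_{a'\cdot\gamma}(a\cdot x)$ and conclude $a'\cdot v_\gamma=v_{a'\cdot\gamma}$. The only difference is cosmetic --- you spell out more carefully why evaluation at points of the form $a\cdot x$ suffices and why the restricted $B_l$-action on $A(N_l)$ is left translation, whereas the paper compresses this into a single displayed chain of equalities.
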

\begin{proof} For any $a,b\in N_l, \gamma\in A(N_l)$ and $s\in\sss_l$, one has
\beqs (a\cdot v_\gamma)(b\cdot x)=v_\gamma(a^{-1}\cdot (b\cdot x))=v_\gamma((a^{-1}b)\cdot x)=\gamma(a^{-1}b)=\gamma(a^{-1}\cdot b)=(a\cdot\gamma)(b),\eeqs
and hence $a\cdot v_\gamma=v_{a\cdot\gamma}$.
\end{proof}

We consider the one-parameter group $r(t)$ by putting
\beqs r(t)z=\exp(t(1+{\rm ad}\ssx))z,\;\forall z\in \sg_l.\eeqs
Clearly it leaves $\sb_l$ stable and $e^{-t}r(t)\in H_l$. For any $v\in S(\sbo_l)$ and $y\in\sb_l$ one lets $r(t)\cdot v\in S(\sbo_l)$ be defined by
\beqs (r(t)\cdot v)(y)=v(r(-t)y).\eeqs

\begin{prop}For any $k\in\mz_+$ and $v\in S_{(k)}(\sbo_l)$ one has
\beqs r(t)\cdot v=e^{-kt}v.\eeqs
Moreover a subspace $V\subseteq S(\sbo_l)$ is graded with respect to the $\ssx$-grading if and only if it is stable under the action of the one-parameter group $r(t)$.
\end{prop}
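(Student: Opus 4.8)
The plan is to compute the action of $r(t)$ directly on the homogeneous pieces used to define the grading. Recall that $S_{(k)}(\sbo_l)=\bigoplus_{a+j=k}(S_a(\sbo_l))_j$, where $(S_a(\sbo_l))_j$ is the eigenspace of $\ad(-x_0)$ (equivalently $\ad(-\ssx)$, since $\ssx=\sum\omega_i(0)$ plays the role of $x_0$ in $\sg_l$) on degree-$a$ elements for eigenvalue $j$. By multiplicativity of both $S_{(i)}(\sbo_l)S_{(j)}(\sbo_l)\subseteq S_{(i+j)}(\sbo_l)$ and the algebra automorphism $r(t)\cdot$, it suffices to treat a single eigenvector $v\in(S_a(\sbo_l))_j$ with $a+j=k$ and show $r(t)\cdot v=e^{-kt}v$. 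Write such a $v$ as a product of $a$ linear functionals $\xi_1,\dots,\xi_a$ on $\sb_l$, i.e. elements of $\sbo_l$ under the identification via $Q$; then $r(t)$ acts on each factor and I track the scalar.

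First I would pin down how $r(t)$ acts on the linear functions, i.e. on $\sbo_l$. On $\sg_l$ we have $r(t)z=\exp\!\big(t(1+\ad\ssx)\big)z$, so on the eigenspace ${\sg_{[m]}}_l$ (eigenvalue $m$ for $\ad\ssx$) it acts by the scalar $e^{t(1+m)}$. Dually, a linear functional supported on ${\sg_{[m]}}_l$ — which, under $Q$, corresponds to an element of ${\sg_{[-m]}}_l\subseteq\sbo_l$ when $m\ge 0$ — gets sent by $(r(t)\cdot v)(y)=v(r(-t)y)$ to $e^{-t(1+m)}$ times itself. Equivalently, if $\xi\in\sbo_l$ lies in the $\ad\ssx$-eigenspace for eigenvalue $-m$ (with $m\ge 0$), then $r(t)\cdot\xi=e^{-t(1+m)}\xi$. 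So each linear factor contributes $e^{-t}$ from the "$1$" and $e^{+tm'}$ from its $\ad x_0$-eigenvalue, where here one must be careful with signs: the $\ad(-x_0)$-eigenvalue of $\xi$ is $+m$. Thus $r(t)\cdot\xi = e^{-t}\,e^{-tj_\xi}\xi$ where $j_\xi$ is the $\ad(-x_0)$-eigenvalue of $\xi$.

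Next I multiply up: for $v=\xi_1\cdots\xi_a$ with total $\ad(-x_0)$-weight $\sum j_{\xi_i}=j$ (since $v\in(S_a(\sbo_l))_j$), the automorphism property gives $r(t)\cdot v=\prod_i\big(e^{-t}e^{-tj_{\xi_i}}\big)\,v=e^{-at}e^{-tj}v=e^{-(a+j)t}v=e^{-kt}v$, as claimed. Extending $\mc$-linearly over the direct sum defining $S_{(k)}(\sbo_l)$ finishes the first assertion. The main (small) obstacle here is bookkeeping the sign conventions — the grading is by $\ad(-x_0)$ while $r(t)$ is built from $+\ad\ssx$, and $\sbo_l$ is identified with the dual of $\sb_l$ via $Q$, which introduces a further sign — so I would state the eigenvalue computation for $r(t)$ on $\sbo_l$ as an explicit preliminary lemma to keep the signs straight before assembling the product.

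For the "moreover" clause: if $V$ is graded, i.e. $V=\bigoplus_k (V\cap S_{(k)}(\sbo_l))$, then the first part shows $r(t)$ acts as $e^{-kt}$ on $V\cap S_{(k)}(\sbo_l)$, so $V$ is $r(t)$-stable. Conversely, suppose $V$ is $r(t)$-stable for all $t$. Take $v\in V$ and decompose $v=\sum_{k} v_k$ with $v_k\in S_{(k)}(\sbo_l)$; then $r(t)\cdot v=\sum_k e^{-kt}v_k\in V$ for every $t$. Since the exponentials $e^{-kt}$ for distinct $k$ are linearly independent functions of $t$ (a Vandermonde / Wronskian argument, or simply differentiate and evaluate at $t=0$ repeatedly), each $v_k$ is a $\mc$-linear combination of the vectors $r(t)\cdot v$ and hence lies in $V$. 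Therefore $V=\bigoplus_k (V\cap S_{(k)}(\sbo_l))$ is graded. I expect no real difficulty here beyond citing the standard fact that finitely many distinct characters $t\mapsto e^{-kt}$ are linearly independent.
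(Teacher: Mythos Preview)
Your proposal is correct and follows essentially the same approach as the paper: compute $r(t)$ on linear elements of $\sbo_l$ via the duality with $\sb_l$ and the invariance of $Q$, extend multiplicatively using that $r(t)$ acts by algebra automorphisms, and for the converse use linear independence of the characters $t\mapsto e^{-kt}$. One small correction: a general $v\in(S_a(\sbo_l))_j$ need not be a single product of $a$ linear functionals, only a linear combination of such; reduce instead to a spanning set of eigenmonomials, which is what the multiplicativity argument actually uses.
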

\begin{proof}
First  one has ${\sg_{[-j]}}_l=S_{(j+1)}(\sbo_l)\cap\sbo_l$ for any $j\geq0$. For any $x\in {\sg_{[-j]}}_l$ and $y\in\sb_l$ it holds
\beqs (r(t)\cdot x)(y)&=&Q(x,r(-t)y)=e^{-t}Q(x,\exp(-t{\rm ad}\ssx)y)\\
&=&e^{-t}Q(\exp(t{\rm ad}\ssx)x,y)=e^{-(j+1)t}Q(x,y)=(e^{-(j+1)t}x)(y).\eeqs
Then $r(t)\cdot x=e^{-(j+1)t}x$ and hence the proposition holds for $v\in\sbo_l$. Since $S(\sbo_l)$ is clearly generated by $\sbo_l$ and $r(t)$ operates as an automorphism, one has $r(t)\cdot v=e^{-kt}v$ for any
$v\in S_{(k)}(\sbo_l)$. This implies that any graded subspace $V$ is stable under the action of $r(t)$. Conversely we assume that $V$ is stable under $r(t)$ for all $t$. Then for any $v\in V$ there exist finitely many
elements $v_i\in S_{(i)}(\sbo_l)$ such that $v=\sum_i v_i$.  Clearly one has
\beqs r(t)\cdot v=\sum_{i}e^{-it}v_i\in V,\eeqs
so all components $v_i$ are necessarily in $V$. Thus $V$ is graded.
\end{proof}

\begin{lemma}For any $a\in N_l, x\in\sb_l$ and $t\in\mr$ one has
\beqs r(t)(a\cdot x)=(\exp(t{\rm ad}\ssx)\cdot a)\cdot r(t)x.\eeqs
\end{lemma}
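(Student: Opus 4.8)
The plan is to reduce the identity to two elementary observations: that $r(t)$ fixes $\ssf$, and that the linear operator $\exp(t\,\ad\ssx)$ intertwines the adjoint action of $N_l$ with conjugation inside $N_l$.

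First I would record that $r(t)\ssf=\ssf$. Since $\ssf\in{\sg_{[-1]}}_l$, which is the $(-1)$-eigenspace of $\ad\ssx$, one has $\exp(t\,\ad\ssx)\ssf=e^{-t}\ssf$; as the scalar $1$ commutes with $\ad\ssx$ the operator $r(t)=\exp(t(1+\ad\ssx))$ equals $e^{t}\exp(t\,\ad\ssx)$ on $\sg_l$, so the two exponential factors cancel and $r(t)\ssf=\ssf$. I would also use that $r(t)$ is linear and stabilizes $\sb_l$, hence $r(t)(\ssf+x)=\ssf+r(t)x$ with $r(t)x\in\sb_l$.

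Next, since $\ssx\in\sh_l$, the operator $\exp(t\,\ad\ssx)=e^{-t}r(t)$ is $\mathrm{Ad}$ of the group element $\exp(t\ssx)\in H_l$, and functoriality of $\mathrm{Ad}$ gives, for every $a\in N_l$ and $w\in\sg_l$,
\[
\exp(t\,\ad\ssx)(aw)=\bigl(\exp(t\ssx)\,a\,\exp(-t\ssx)\bigr)\bigl(\exp(t\,\ad\ssx)w\bigr),
\]
where $\exp(t\ssx)\,a\,\exp(-t\ssx)\in N_l$ because $\sh_l$ normalizes $\sn_l$; by the definition of the $B_l$-action on $N_l$ this element is precisely $\exp(t\,\ad\ssx)\cdot a$. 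Multiplying by the scalar $e^{t}$ and using that elements of $N_l$ act linearly on $\sg_l$, I obtain $r(t)(aw)=(\exp(t\,\ad\ssx)\cdot a)(r(t)w)$ for all $w\in\sg_l$.

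Finally I would specialize to $w=\ssf+x$. Combining the displayed intertwining identity with $r(t)(\ssf+x)=\ssf+r(t)x$ yields $r(t)\bigl(a(\ssf+x)\bigr)=(\exp(t\,\ad\ssx)\cdot a)\bigl(\ssf+r(t)x\bigr)$, and then, subtracting $r(t)\ssf=\ssf$ and unwinding the definition $b\cdot y=b(\ssf+y)-\ssf$ on both sides,
\[
r(t)(a\cdot x)=r(t)\bigl(a(\ssf+x)\bigr)-\ssf=(\exp(t\,\ad\ssx)\cdot a)(\ssf+r(t)x)-\ssf=(\exp(t\,\ad\ssx)\cdot a)\cdot r(t)x,
\]
which is the assertion. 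The only point demanding a little care is matching the notation $\exp(t\,\ad\ssx)\cdot a$ with the conjugation action of $H_l\subseteq B_l$ on $N_l$ fixed earlier, and checking that the scalar $e^{t}$ passes freely through the (linear) adjoint action; beyond that bookkeeping there is no genuine obstacle.
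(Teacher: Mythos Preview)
Your argument is correct and follows exactly the same route as the paper: verify $r(t)\ssf=\ssf$ from $\ssf\in{\sg_{[-1]}}_l$, use the factorization $r(t)=e^{t}\exp(t\,\ad\ssx)$ to pass $r(t)$ through the adjoint action of $a$ (picking up the conjugate $\exp(t\,\ad\ssx)\cdot a$), and then unwind the definition $a\cdot x=a(\ssf+x)-\ssf$. The paper's proof is the same computation written in a single chain of equalities, while you spell out the intermediate identifications a bit more carefully.
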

\begin{proof}
Noting that $\ssf\in(\sg_l)_{[-1]}$, then  one has $r(t)\ssf=\ssf$. Thus
\beqs r(t)(a\cdot x)&=&r(t)(a(\ssf+x)-\ssf)=r(t)a(\ssf+x)-\ssf\\
&=&(\exp(t{\rm ad}\ssx)a\exp(-t{\rm ad}\ssx))r(t)(\ssf+x)-\ssf\\
&=&(\exp(t{\rm ad}\ssx)\cdot a)(\ssf+r(t)x)-\ssf\\
&=&(\exp(t{\rm ad}\ssx)\cdot a)\cdot r(t)x.\eeqs
\end{proof}

\begin{theo}\label{atimessbn}
The algebra $A$ is graded with respect to the $\ssx$-grading. In fact for any $\gamma\in A(N_l)$ one has
\beqs r(t)\cdot v_\gamma=v_{\exp(t{\rm ad}x_0)\cdot\gamma}.\eeqs

If tensor product maps to multiplication one has an isomorphism
\beq\label{S(b)=A-times-SG} A\otimes S(\sbo_l)^{N_l}\rightarrow S(\sbo_l)\eeq
of graded algebras with respect to the $\ssx$-grading. In particularly one has
\beqs a\cdot(v_\gamma I^{\ssf})=v_{a\cdot\gamma}I^{\ssf}\eeqs
for any $a\in N_l, \gamma\in A(N_l)$ and $I\in S(\sg_l)^{G_l}$.
\end{theo}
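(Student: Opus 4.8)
The plan is to verify the three assertions of Theorem~\ref{atimessbn} in order, using the isomorphism $A(N_l)\otimes S({\sss_l}^*)\cong S(\sbo_l)$ together with the grading results already established. First I would prove the formula $r(t)\cdot v_\gamma=v_{\exp(t\,{\rm ad}x_0)\cdot\gamma}$ by a direct computation: for $\gamma\in A(N_l)$, $a\in N_l$ and $x\in\sss_l$, apply the preceding lemma to rewrite $r(-t)(a\cdot x)$ (after adjusting $r(-t)x$ back into $\sss_l$, using that $\sss_l$ is $\ad\ssx$-stable so $r(-t)$ permutes $\sss_l$ up to a scalar on each graded piece, hence actually preserves $\sss_l$ as a set once one remembers $r(t)=e^t\exp(t\,{\rm ad}\ssx)$ acts by $e^{-jt}$ on $\sss_l\cap\sg_{[-j]}{}_l$ — more cleanly, the lemma gives $r(t)(a\cdot x')=(\exp(t\,{\rm ad}\ssx)\cdot a)\cdot r(t)x'$ with $r(t)x'\in\sss_l$). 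Evaluating $(r(t)\cdot v_\gamma)$ on $a\cdot r(t)x'$ and comparing with $\gamma$ evaluated at the appropriate group element yields the claim, and since $\exp(t\,{\rm ad}\ssx)$ restricted to $N_l$ agrees with $\exp(t\,{\rm ad}x_0)$ on $N$ via the grading (the $+1$ in $1+\ad\ssx$ is absorbed by the scalar $e^t$ and disappears under the ratio defining $a\cdot x$), one gets the stated form. From this, gradedness of $A$ with respect to the $\ssx$-grading is immediate: $A$ is the image of $A(N_l)$, and $A(N_l)$ decomposes into eigenspaces under $\exp(t\,{\rm ad}x_0)$ (the $N_l$-action is algebraic and $x_0$ acts locally finitely), so $r(t)$ acts on $A$ with eigenvalue-type decomposition, and by the previous proposition (stability under $r(t)$ $\iff$ graded) $A$ is graded.

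Next I would establish the isomorphism \eqref{S(b)=A-times-SG}. The underlying vector-space (in fact algebra) isomorphism $A\otimes S(\sbo_l)^{N_l}\to S(\sbo_l)$ comes from transporting the isomorphism $A(N_l)\otimes S({\sss_l}^*)\cong S(\sbo_l)$ through the identifications $A(N_l)\cong A$ (Proposition~\ref{A-is-N-submodule-of-S(b)}) and $S({\sss_l}^*)\cong S(\sbo_l)^{N_l}$ (the restriction-to-$\sss_l$ isomorphism used in the proof of Theorem~\ref{invar-iso-2}): indeed an element of $S(\sbo_l)^{N_l}$ is determined by its restriction to $\sss_l$, and multiplying $A$ by $S(\sbo_l)^{N_l}$ reproduces the tensor decomposition $v_\gamma\cdot I^{\ssf}$ evaluated at $a\cdot x$ equals $\gamma(a)\cdot (I^{\ssf}|_{\sss_l})(x)$. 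To see this is an isomorphism of \emph{graded} algebras one checks both factors are graded — $A$ by the first part, $S(\sbo_l)^{N_l}$ by Theorem~\ref{invar-iso-graded} — and that the multiplication map preserves the $\ssx$-grading, which holds since $S_{(i)}(\sbo_l)S_{(j)}(\sbo_l)\subseteq S_{(i+j)}(\sbo_l)$.

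Finally, the formula $a\cdot(v_\gamma I^{\ssf})=v_{a\cdot\gamma}I^{\ssf}$ for $a\in N_l$ follows by combining $N_l$-equivariance of the three ingredients: $a\cdot v_\gamma=v_{a\cdot\gamma}$ from Proposition~\ref{A-is-N-submodule-of-S(b)}, $a\cdot I^{\ssf}=I^{\ssf}$ from Lemma~\ref{invar-iso-1}, and the fact that $N_l$ acts on $S(\sbo_l)$ by algebra automorphisms, so $a\cdot(v_\gamma I^{\ssf})=(a\cdot v_\gamma)(a\cdot I^{\ssf})=v_{a\cdot\gamma}I^{\ssf}$. I expect the main obstacle to be the bookkeeping in the first step — precisely matching the one-parameter group $r(t)=\exp(t(1+\ad\ssx))$ acting on $S(\sbo_l)$ against the group $\exp(t\,{\rm ad}x_0)$ acting on $A(N_l)$, making sure the additive constant $1$ and the difference between $\ssx$ and $x_0$ are correctly tracked through the definition $a\cdot x=a(\ssf+x)-\ssf$ and through the identification of $\sss_l$-valued coordinates; once the conjugation lemma $r(t)(a\cdot x)=(\exp(t\,{\rm ad}\ssx)\cdot a)\cdot r(t)x$ is in hand this is routine but must be done carefully.
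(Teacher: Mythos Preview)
Your proposal is correct and follows essentially the same route as the paper's proof: the paper computes $(r(t)\cdot v_\gamma)(a\cdot x)=v_\gamma(r(-t)(a\cdot x))=v_\gamma((\exp(-t\,\ad\ssx)\cdot a)\cdot r(-t)x)=\gamma(\exp(-t\,\ad\ssx)\cdot a)$ using that $\sss_l$ is $r(t)$-stable, then deduces the tensor isomorphism from $A(N_l)\otimes S(\sss_l^*)\cong S(\sbo_l)$ together with $A(N_l)\cong A$ and $S(\sss_l^*)\cong S(\sbo_l)^{N_l}$, and finally gets $a\cdot(v_\gamma I^{\ssf})=v_{a\cdot\gamma}I^{\ssf}$ from the automorphism property plus $a\cdot I^{\ssf}=I^{\ssf}$ --- exactly your three steps. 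Your worry about the bookkeeping between $x_0$ and $\ssx$ is unnecessary, since $\ssx=x_0(0)$ and $\ad\ssx$ on $\sg_l$ is precisely the $x_0$-grading operator, so $\exp(t\,\ad\ssx)$ and $\exp(t\,\ad x_0)$ denote the same action on $N_l$; once you use the lemma $r(t)(a\cdot x)=(\exp(t\,\ad\ssx)\cdot a)\cdot r(t)x$ and the $r(t)$-stability of $\sss_l$, the computation is immediate.
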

\begin{proof}Noting that $\sss_l$ is ${\rm ad}\ssx$-stable and hence also $r(t)$-stable. For any $a\in N_l, x\in\sss_l$ one has
\beqs (r(t)\cdot v_\gamma)(a\cdot x)&=&v_\gamma(r(-t)(a\cdot x))=v_\gamma((\exp(-t{\rm ad}\ssx)\cdot a)\cdot r(-t)x)\\
&=&\gamma((\exp(-t{\rm ad}\ssx)\cdot a)=(\exp(t{\rm ad}\ssx)\cdot \gamma)(a)\\
&=&v_{\exp(t{\rm ad}\ssx)\cdot \gamma}(a\cdot x),\eeqs
and hence
\beqs r(t)\cdot v_\gamma=v_{\exp(t{\rm ad}\ssx)\cdot\gamma}.\eeqs

By Proposition 3.5, $S(\sbo_l)^{N_l}$ is graded. Clearly $S(\sss_l^*)$ maps into $S(\sbo_l)^{N_l}$ under the isomorphism $A(N_l)\otimes S(\sss_l^*)\rightarrow S(\sbo_l)$. Then the isomorphism $A(N_l)\rightarrow A$ establishes the isomorphism $A\otimes S(\sbo_l)^{N_l}\rightarrow S(\sbo_l)$.

Since $N_l$ operates as a group of automorphisms of $S(\sbo_l)$ and $a\cdot I^{\ssf}=(a\cdot I)^{\ssf}=I^{\ssf}$, clearly one has $a\cdot(v_\gamma I^\ssf)=v_{a\cdot\gamma}I^\ssf$.
\end{proof}

\subsection{}

Noting that the action of $N_l$ on $S(\sbo_l)$ arises from an affine action of the unipotent group $N_l$ on $\sb_l$, thus $N_l\cdot v$ spans a finite-dimensional space for any $v\in S(\sbo_l)$. Then one obtains a representation on $\sn_l$ as a Lie algebra of $S(\sbo_l)$ which is given by
\beqs z\cdot v=\frac{d}{dt}(\exp(t{\rm ad}z)\cdot v)\mid_{t=0},\;\forall z\in\sn_l, v\in S(\sbo_l).\eeqs
\begin{remark}\label{N-finiteness}
It is easy to see that the action of $N_l$ on $S(\sbo_l)$ is locally finite. Then $v\in S(\sbo_l)^{N_l}$ if and only if $z\cdot v=0$ for all $z\in\sn_l$. Also $A$ is stable under the action of $N_l$ if and only if it is stable under the adjoint action of $\sn_l$. \end{remark}

Let $P:\sg_l\rightarrow\sbo_l$ be the projection map such that ${\rm Ker}P=\sn_l$.

\begin{prop}\label{adei-dot-1}For any $x\in\sbo_l, z\in\sn_l$ one has $z\cdot x=P[z,x]+Q([z,x],\ssf)1$ where $1$ is the identity of $S(\sbo_l)$.
\end{prop}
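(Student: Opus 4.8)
The plan is to compute $z\cdot x$ directly from the definitions: differentiate the affine $N_l$-action on functions at the identity, and then tidy up the resulting expression using the invariance of $Q$ together with the orthogonality $Q(\sn_l,\sb_l)=0$.

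First I would unwind the objects involved. Under the identification of $\sg_l$ with $\sg_l^*$ via $Q$, an element $x\in\sbo_l$ is the linear function $y\mapsto Q(x,y)$ on $\sb_l$, and $S(\sbo_l)$ is the affine algebra of $\sb_l$. For $z\in\sn_l$ the operator $\ad z$ is nilpotent on $\sg_l$, so $\exp(t\,\ad z)\in N_l$ is an honest finite sum and the one-parameter subgroup acts on the function $x$ by
\[
(\exp(t\,\ad z)\cdot x)(y)=Q\bigl(x,\;\exp(-t\,\ad z)(\ssf+y)-\ssf\bigr),\qquad y\in\sb_l .
\]
Expanding $\exp(-t\,\ad z)(\ssf+y)=(\ssf+y)-t[z,\ssf+y]+O(t^2)$ and differentiating at $t=0$ (the $O(t^2)$ terms drop out) gives $(z\cdot x)(y)=-Q(x,[z,\ssf+y])=-Q(x,[z,\ssf])-Q(x,[z,y])$.

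Next I would apply invariance of $Q$ in the form $-Q(x,[z,w])=Q([z,x],w)$ twice, once with $w=\ssf$ and once with $w=y$, obtaining $(z\cdot x)(y)=Q([z,x],\ssf)+Q([z,x],y)$. The first summand is the value at $y$ of the constant function $Q([z,x],\ssf)\cdot 1$. For the second summand I would observe that $Q(\sn_l,\sb_l)=0$ — indeed $Q(n(i),b(j))=\delta_{i+j,l}\kappa(n,b)$ and $\kappa(\sn,\sb)=0$ — so $\sn_l=\ker P$ is exactly the $Q$-orthogonal complement of $\sb_l$ in $\sg_l$; writing $[z,x]=P[z,x]+r$ with $r\in\sn_l$ we get $Q([z,x],y)=Q(P[z,x],y)$ for all $y\in\sb_l$, i.e.\ $Q([z,x],y)$ is the value at $y$ of the element $P[z,x]\in\sbo_l\subseteq S(\sbo_l)$. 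Combining, $(z\cdot x)(y)=Q(P[z,x],y)+Q([z,x],\ssf)$ for every $y\in\sb_l$, which is precisely the asserted identity $z\cdot x=P[z,x]+Q([z,x],\ssf)1$ in $S(\sbo_l)$.

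I do not expect a genuine obstacle here: the whole argument is a short computation. The only points needing care are the sign bookkeeping in the Taylor expansion of the adjoint action and in the two uses of $Q$-invariance, and the step justifying that $P$ may be slipped inside the pairing — namely that the error term $[z,x]-P[z,x]$ lies in $\sn_l$ and hence pairs trivially with all of $\sb_l$. (That the $N_l$-action is locally finite, so that the infinitesimal action $z\cdot(-)$ is well defined, is Remark \ref{N-finiteness}.)
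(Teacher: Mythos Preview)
Your proof is correct and follows essentially the same route as the paper: differentiate the affine $N_l$-action at the identity, use the invariance of $Q$ to move the bracket, and then observe that $Q([z,x],-)$ restricted to $\sb_l$ agrees with $Q(P[z,x],-)$ because $\sn_l=\ker P$ is $Q$-orthogonal to $\sb_l$. The paper's version is simply more compressed, writing the whole computation in a single display and leaving the last orthogonality step implicit.
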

\begin{proof}By the definition one has
\beqs &&(z\cdot x)(y)=\frac{d}{dt}\left((\exp(t{\rm ad}z)\cdot x)(y)\right)\mid_{t=0}=\frac{d}{dt}x\left((\exp(-t{\rm ad}z)\cdot y\right)\mid_{t=0}\\
&&=\frac{d}{dt}x\left((\exp(-t{\rm ad}z)(\ssf+y)-\ssf\right)\mid_{t=0}=x(-[z,\ssf+y])1=Q([z,x],y+\ssf)1.\eeqs
Thus $z\cdot x=P[z,x]+Q([z,x],\ssf)1$.\end{proof}
If $z=e_{\alpha_i}(j)$ and $v\in S(\sbo_l)$, then there exist uniquely $v', v_{j'}\in S(\sbo_l)$ such that
\beq\label{adei-v}\ad e_{\alpha_i}(j)(v)=v'+\sum_{j'\geq j}v_{j'}e_{\alpha_i}(j'). \eeq

\begin{prop}\label{adei-v-prop}Let $i=1,\cdots,n, j=0,\cdots,l$ and let $v\in S(\sbo_l)$. Then
\beq\label{adei-v-equation} e_{\alpha_i}(j)\cdot v=v'+\sum_{j'\geq j}Q(e_{\alpha_i}(j'), e_{-\alpha_i}(l-j'))v_{j'}\eeq
for uniquely determined elements $v',v_{j'}\in S(\sbo_l)$ in \eqref{adei-v}.
\end{prop}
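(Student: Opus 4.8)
The plan is to compute $e_{\alpha_i}(j)\cdot v$ directly from Proposition \ref{adei-dot-1} and the fact that $v\mapsto z\cdot v$ is a derivation of $S(\sbo_l)$. The key observation is that the ordinary adjoint action $\ad e_{\alpha_i}(j)$ of $\sn_l$ on $S(\sg_l)$ restricts to a derivation, and by Proposition \ref{adei-dot-1} the ``translated'' action $z\cdot(-)$ differs from $P\circ\ad z$ only by the scalar term $Q([z,\cdot\,],\ssf)1$. So first I would decompose $\ad e_{\alpha_i}(j)(v)$ according to whether its components lie in $\sbo_l$ (the part killed by $P$ being exactly the $\sn_l$-component): writing $v$ as a polynomial in the coordinate functions, each application of $\ad e_{\alpha_i}(j)$ produces terms, and collecting those terms whose ``new factor'' is $e_{\alpha_i}(j')\in\sn_l$ for some $j'\geq j$ gives precisely the expression $v'+\sum_{j'\geq j}v_{j'}e_{\alpha_i}(j')$ of \eqref{adei-v}. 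Here the constraint $j'\geq j$ comes from the grading: $[e_{\alpha_i}(j), e_{-\beta}(k)]$ can only contribute an $e_{\alpha_i}$-factor of $t$-degree $\geq j$ since bracketing with $e_{\alpha_i}(j)$ raises the relevant $t$-degree.

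Second, I would apply the derivation $z\cdot(-)$ with $z=e_{\alpha_i}(j)$ to $v$ and match terms with $\ad e_{\alpha_i}(j)(v)$. The part $v'$ of $\ad e_{\alpha_i}(j)(v)$ already lies in $S(\sbo_l)$, so $P$ fixes it and it contributes $v'$ unchanged; the extra scalar correction $Q([e_{\alpha_i}(j),\cdot\,],\ssf)1$ from Proposition \ref{adei-dot-1}, applied via the Leibniz rule, acts on each monomial by pairing the $\sn_l$-component $\sum_{j'\geq j}v_{j'}e_{\alpha_i}(j')$ against $\ssf$. Since $\ssf=\sum_{k=0}^l\sf_k(k)$ and $\sf_k\in\sg_{[-1]}$ is principal nilpotent, only the simple-root coordinate $e_{-\alpha_i}(l-j')$ survives the pairing $Q(e_{\alpha_i}(j'), -)$, because $Q(e_{\alpha_i}(j'), e_{-\beta}(k))=\delta_{j'+k,l}\kappa(e_{\alpha_i}, e_{-\beta})$ and $\kappa(e_{\alpha_i}, e_{-\beta})\neq0$ forces $\beta=\alpha_i$. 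This yields exactly $\sum_{j'\geq j}Q(e_{\alpha_i}(j'), e_{-\alpha_i}(l-j'))v_{j'}$ as the scalar contribution, giving \eqref{adei-v-equation}.

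The main obstacle will be bookkeeping the precise interaction between the projection $P$ and the Leibniz rule: when $z\cdot(-)$ is applied as a derivation to a product of coordinate functions, one must be careful that each factor $e_{-\beta}(k)$ gets sent to $P[z, e_{-\beta}(k)] + Q([z, e_{-\beta}(k)],\ssf)1$, and then re-expand the product. The terms where $P$ does nothing (i.e. $[z, e_{-\beta}(k)]\in\sbo_l$) assemble into $v'$, while the terms where a factor is replaced by the scalar $Q([z,e_{-\beta}(k)],\ssf)$ must be shown to assemble into $\sum_{j'}Q(e_{\alpha_i}(j'),e_{-\alpha_i}(l-j'))v_{j'}$ — and this identification is exactly the statement that the coefficient $v_{j'}$ picked out in \eqref{adei-v} (the coefficient of $e_{\alpha_i}(j')$ when $\ad e_{\alpha_i}(j)(v)$ is written in coordinates) equals the sum of ``residual products'' obtained by deleting that factor. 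This is really just the observation that $\partial/\partial(e_{-\alpha_i}(l-j'))$ of a monomial, multiplied by the pairing scalar, computes the same thing, so once the notation is set up cleanly the identity follows; I would keep the argument at the level of ``apply the derivation, use Proposition \ref{adei-dot-1} factorwise, and compare with \eqref{adei-v}'' rather than expanding a general monomial.
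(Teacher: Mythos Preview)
Your proposal is correct and follows essentially the same route as the paper: verify the formula for $v\in\sbo_l$ using Proposition~\ref{adei-dot-1}, then use that both $\ad e_{\alpha_i}(j)$ on $S(\sg_l)$ and the translated action $e_{\alpha_i}(j)\cdot(-)$ on $S(\sbo_l)$ are derivations (the latter because $N_l$ acts by algebra automorphisms) to extend to all of $S(\sbo_l)$ via the Leibniz rule; the comparison of $(uv)'=u'v+uv'$ and $(uv)_{j'}=u_{j'}v+uv_{j'}$ is exactly the ``bookkeeping'' you describe, and the paper handles it in one line. Your side remarks on why $j'\geq j$ and why only $\beta=\alpha_i$ contributes to the pairing with $\ssf$ are correct unpackings of the definitions that the paper leaves implicit.
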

\begin{proof}When $v\in\sbo_l$ the statement holds by \eqref{adei-v} and Proposition \ref{adei-dot-1}.  Noting that $\ad e_{\alpha_i}(j)$ is a derivation of $S(\sg_l)$ and
\beqs (\ad e_{\alpha_i}(j))(uv)=(u'v+uv')+\sum_{j'\geq j}(u_{j'}v+uv_{j'})e_{\alpha_i}(j'),\eeqs
this implies
\beqs e_{\alpha_i}(j)\cdot(uv)=(u'v+uv')\sum_{j'\geq j}(u_{j'}v+uv_{j'})Q(e_{\alpha_i}(j'),e_{\alpha_i}(l-j'))\eeqs
and hence $e_{\alpha_i}(j)$ operates as a derivation of $S(\sbo_l)$. This proves the statement.
\end{proof}
\begin{prop}\label{z-dot-v}Let $z\in{\sg_{[j]}}_l, j\geq1$ and $v\in S_{(k)}(\sbo_l)$. Then $z\cdot v\in S_{(k-j)}(\sbo_l)$.
\end{prop}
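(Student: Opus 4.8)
The plan is to reduce the statement to the case $v \in \sbo_l$ by exploiting that $z \cdot (-)$ acts as a derivation of $S(\sbo_l)$ (this was established in Propositions \ref{adei-dot-1} and \ref{adei-v-prop} for $z = e_{\alpha_i}(j)$, and extends to all $z \in \sn_l$ by linearity since the $e_{\alpha_i}(j)$ together with their brackets span $\sn_l$). Since $S(\sbo_l)$ is generated as an algebra by $\sbo_l$, and both the grading $S_{(k)}(\sbo_l)$ and the derivation property are multiplicative in the appropriate sense (a product of a $(k_1)$-element and a $(k_2)$-element lands in degree $(k_1+k_2)$), it suffices to check that $z \cdot x \in S_{(k-j)}(\sbo_l)$ whenever $z \in {\sg_{[j]}}_l$ and $x \in \sbo_l \cap S_{(k)}(\sbo_l)$.

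For that base case I would invoke Proposition \ref{adei-dot-1}: $z \cdot x = P[z,x] + Q([z,x],\ssf)\,1$. So I must track the $\ssx$-degree of both terms. Recall from the earlier discussion that the eigenspace decomposition for $\ad \ssx$ matches the $\ssx$-grading on $\sbo_l$ via ${\sg_{[-i]}}_l = S_{(i+1)}(\sbo_l) \cap \sbo_l$; thus $x \in S_{(k)}(\sbo_l) \cap \sbo_l$ means $x \in {\sg_{[-(k-1)]}}_l$. Since $z \in {\sg_{[j]}}_l$ and brackets respect the grading, $[z,x] \in {\sg_{[j-(k-1)]}}_l = {\sg_{[-(k-1-j)]}}_l$, which after applying the projection $P$ (whose kernel $\sn_l$ is a sum of the ${\sg_{[i]}}_l$ with $i \geq 1$, hence $P$ preserves the grading) gives $P[z,x] \in S_{(k-j)}(\sbo_l)$. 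For the scalar term, $Q([z,x],\ssf)$ is a constant, and a constant lies in $S_{(0)}(\sbo_l) = \mc 1$; this term is nonzero only when $[z,x]$ has a component pairing nontrivially with $\ssf \in {\sg_{[-1]}}_l$, i.e. when $k - j = 0$, in which case $S_{(k-j)}(\sbo_l) = S_{(0)}(\sbo_l) = \mc 1$ contains it. So in all cases $z \cdot x \in S_{(k-j)}(\sbo_l)$.

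The remaining work is to make the derivation-induction precise: write a general $v \in S_{(k)}(\sbo_l)$ as a sum of monomials $x_1 \cdots x_m$ with $x_r \in {\sg_{[-i_r]}}_l$ and $\sum_r (i_r + 1) = k$, apply the Leibniz rule $z \cdot (x_1 \cdots x_m) = \sum_r x_1 \cdots (z \cdot x_r) \cdots x_m$, and observe that each summand has a factor $z \cdot x_r$ of degree $(i_r + 1 - j)$ in place of one of degree $(i_r+1)$, so the total degree drops by exactly $j$. A clean alternative, which I would actually prefer to keep the writeup short, is to use the one-parameter group $r(t)$: by the Lemma preceding Theorem \ref{atimessbn}, $r(t)(a \cdot x) = (\exp(t\,{\rm ad}\ssx)\cdot a)\cdot r(t)x$, and combined with the Proposition stating $r(t)\cdot v = e^{-kt}v$ for $v \in S_{(k)}(\sbo_l)$, differentiating the identity $r(t)\cdot(z \cdot v) = \frac{d}{ds}\big(r(t)\cdot(\exp(s\,{\rm ad}z)\cdot v)\big)|_{s=0}$ in $t$ and using $[\,\ssx, z\,] = jz$ for $z \in {\sg_{[j]}}_l$ shows $r(t)\cdot(z\cdot v) = e^{-(k-j)t}(z \cdot v)$, whence $z \cdot v \in S_{(k-j)}(\sbo_l)$. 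The main obstacle is purely bookkeeping: being careful that $P$ and the bracket both respect the grading and that the scalar correction term in Proposition \ref{adei-dot-1} only appears in degree zero; there is no real difficulty beyond organizing these degree counts consistently.
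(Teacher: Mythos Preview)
Your proposal is correct. The paper's own proof is a close cousin of your first argument, but it makes the opposite reduction: rather than reducing $v$ to a linear element via the derivation property, the paper reduces $z$ to a simple root vector $e_{\alpha_i}(j)\in{\sg_{[1]}}_l$ (using that these generate $\sn_l$ as a Lie algebra and that the $\eta$-translated action is a Lie algebra action), and then for $v\in (S_p(\sbo_l))_q$ with $p+q=k$ reads off directly from the decomposition in Proposition~\ref{adei-v-prop} that $v'\in (S_p(\sbo_l))_{q-1}$ and $v_{j'}\in (S_{p-1}(\sbo_l))_q$, both of which lie in $S_{(k-1)}(\sbo_l)$. So the paper tracks the bigrading $(p,q)$ for general $v$ and special $z$, while you track the $\ssx$-grading for general $z$ and linear $v$; the two are dual reductions relying on the same pair of formulas. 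Your $r(t)$-alternative is not in the paper and is a genuinely cleaner route once the groundwork on $r(t)$ is in place, though as written it needs a line making precise the intertwining identity $r(t)\cdot(a\cdot u)=(\exp(t\,\ad\ssx)\cdot a)\cdot(r(t)\cdot u)$ on $S(\sbo_l)$ before differentiating in $s$.
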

\begin{proof}Since $\sn_l$ is generated by the elements $e_{\alpha_i}(j), 1\leq i\leq n, 0\leq j\leq l$, it suffices to assume $z=e_{\alpha_i}(j)$. Since $S_p(\sg_l)$ is preserved by all $\ad e_{\alpha_i}(j)$ and $[\ssx, e_{\alpha_i}(j)]=e_{\alpha_i}(j)$, then it follows $v'\in(S_p(\sbo_l))_{q-1}$ and $v_{j'}\in(S_{p-1}(\sbo_l))_{q}$ for $v\in (S_{p}(\sbo_l))_q$ where $p+q=k$ and the elements  $v', v_{j'}$ are given by \eqref{adei-v-equation}. So clearly $e_{\alpha_i}(j)\cdot v\in S_{(k-1)}(\sbo_l)$.
\end{proof}

\begin{prop}Let $z\in{\sg_{[j]}}_l, j\geq1$ and $v\in A_{(k)}$. Then $z\cdot v\in A_{(k-j)}$. Moreover, $z\cdot v=0$ for all $z\in\sn_l$ if and only if $v\in\mc1$.
\end{prop}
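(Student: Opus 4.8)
The plan is to deduce both assertions from the structural results already established: Proposition \ref{A-is-N-submodule-of-S(b)} (that $A$ is an $N_l$-submodule of $S(\sbo_l)$, and $A\cong A(N_l)$ as $N_l$-modules), Proposition \ref{z-dot-v} (the grading shift of the $\sn_l$-action on $S(\sbo_l)$), Theorem \ref{atimessbn} (that $A$ is graded for the $\ssx$-grading and that $A\otimes S(\sbo_l)^{N_l}\cong S(\sbo_l)$), and Remark \ref{N-finiteness}.

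For the first assertion, note first that ${\sg_{[j]}}_l\subseteq\sn_l$ whenever $j\geq1$, so $z\cdot v$ is defined for $v\in S(\sbo_l)$. By Remark \ref{N-finiteness}, stability of $A$ under $N_l$ is the same as stability under the infinitesimal action of $\sn_l$; combined with Proposition \ref{A-is-N-submodule-of-S(b)} this gives $z\cdot v\in A$ for all $v\in A$. On the other hand Theorem \ref{atimessbn} says that $A$ is a graded subalgebra for the $\ssx$-grading, so $A_{(k)}=A\cap S_{(k)}(\sbo_l)$; applying Proposition \ref{z-dot-v} to $v\in A_{(k)}\subseteq S_{(k)}(\sbo_l)$ yields $z\cdot v\in S_{(k-j)}(\sbo_l)$. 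Intersecting, $z\cdot v\in A\cap S_{(k-j)}(\sbo_l)=A_{(k-j)}$.

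For the second assertion, the implication $v\in\mc1\Rightarrow z\cdot v=0$ is immediate, since $N_l$ acts on $S(\sbo_l)$ by algebra automorphisms and hence fixes the identity $1$, so that $z\cdot1=0$ for every $z\in\sn_l$. Conversely, assume $v\in A$ with $z\cdot v=0$ for all $z\in\sn_l$. By Remark \ref{N-finiteness} this forces $v\in S(\sbo_l)^{N_l}$, hence $v\in A\cap S(\sbo_l)^{N_l}$. Now I would invoke the explicit tensor decomposition $S(\sbo_l)\cong A(N_l)\otimes S({\sss_l}^*)$ coming from the isomorphism \eqref{variety-times-iso}, under which $A$ is identified with $A(N_l)\otimes1$ and $S(\sbo_l)^{N_l}$ with $1\otimes S({\sss_l}^*)$ (this is precisely what underlies Theorem \ref{atimessbn}). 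Since the intersection of the two subspaces $A(N_l)\otimes1$ and $1\otimes S({\sss_l}^*)$ inside the tensor product is exactly $\mc(1\otimes1)$, we conclude $v\in\mc1$. Alternatively, one may observe that under the $N_l$-module isomorphism $A\cong A(N_l)$ of Proposition \ref{A-is-N-submodule-of-S(b)} the action of $N_l$ on $A(N_l)$ is by left translation, and that the only left-translation-invariant regular functions on the group $N_l$ are the constants.

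Given the machinery developed above, none of these steps is technically deep; the only place that calls for genuine care is the identification $A\cap S(\sbo_l)^{N_l}=\mc1$, and for that reason I would keep the tensor-product description of $S(\sbo_l)$ explicit rather than argue abstractly, so that $A$ and $S(\sbo_l)^{N_l}$ appear visibly as complementary tensor factors.
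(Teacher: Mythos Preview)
Your proof is correct and follows essentially the same route as the paper: the paper also deduces the first assertion from the $\sn_l$-stability of $A$ (via Proposition \ref{A-is-N-submodule-of-S(b)} and Remark \ref{N-finiteness}) together with Proposition \ref{z-dot-v}, and deduces the second from Remark \ref{N-finiteness} and the fact $A\cap S(\sbo_l)^{N_l}=\mc1$. You are simply more explicit about why that intersection equals $\mc1$, invoking the tensor decomposition underlying Theorem \ref{atimessbn}, which the paper takes for granted.
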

\begin{proof}Since $A$ is $\sn_l$-stable by Remark \ref{N-finiteness} then the first statement follows from Proposition \ref{z-dot-v}. The second statement follows from Remark \ref{N-finiteness} and the fact
\beqs A\cap S(\sbo_l)^{N_l}=\mc1.\eeqs
\end{proof}

\section{Decomposition of $U(\sbo_l)$}

Let $U(\sbo_l)$ be the universal enveloping algebra of the subalgebra $\sbo_l$. In this section we construct the Whittaker model $W(\sbo_l)$ of the center $Z(\sg_l)$ and then decompose $U(\sbo_l)$ into a tensor product of $\widetilde{A}\otimes W(\sbo_l)$. The frame and the ideal of this section
is due to Kostant which appeared in the study of Whittaker modules of $\sg$ in   \cite{Kos3}. However we still write all detailed proofs for the readability.

\subsection{}
For any Lie subalgebra $\frak{a}\subseteq\sg_l$ let $U(\frak{a})$ denote the universal enveloping algebra of $\frak{a}$. Of course $U(\frak{a})$ is regarded as a subalgebra of $U(\sg_l)$ in the usual sense.
For any $k\in\mathbb{N}$, let $U_k(\frak{a})$ denote the subspace spanned by all elements $x_1\cdots x_i, i\leq k, x_j\in\frak{a}$. Then one obtains  the well known filtration of $U(\frak{a})$:
\beqs U_0(\frak{a})\subseteq U_1(\frak{a})\subseteq\cdots\subseteq U_k(\frak{a})\subseteq U_{k+1}(\frak{a})\subseteq\cdots.\eeqs
In particular, $U_0(\frak{a})=\mc, U_1(\frak{a})=\mc+\frak{a}$.

Let $u(p,m,q)\in U(\sg)$ be the element with the same expression of $e(p,m,q)\in S(\sg)$. By the PBW Theorem, all such elements form a basis of $U(\sg)$. In particular, $U_k(\sg)$ is spanned by the elements
\beqs u(p,m,q),\; |p|+|m|+|q|\leq k.\eeqs

Clearly $U_k(\sbo_l)$ is stable under the action of $\ad\ssx$. For any $j\in\mathbb{N}$ let $(U_k(\sbo_l))_j$ denote the eigenspace of $U_k(\sbo_l)$ for $\ad(-\ssx)$ corresponding to the eigenvalue $j$ and put
\beq\label{directsum-U} U_{(i)}(\sbo_l)=\sum_{k+j=i}(U_k(\sbo_l))_j.\eeq
In particular, the subspaces $U_{(i)}(\sbo_l)$ also define a filtration of $U(\sbo_l)$, which is called the $\ssx$-filtration of $U(\sbo_l)$.

By the PBW theorem there is a unique series of exact sequences
\beq\label{exact-sequence-tau-k} \xymatrix@C=0.5cm{
  0 \ar[r] & U_{k-1}(\sg_l) \ar[rr]^{\rm inj} && U_k(\sg_l) \ar[rr]^{\tau_{k}} && S_k(\sg_l) \ar[r] & 0 }
\eeq
for all $k\in\mz_+$ such that $\tau_1|_{\sg_l}$ is the identity map and
\beqs \tau_{i+j}(uv)=\tau_i(u)\tau_j(v)\eeqs
for all $u\in U_i(\sg_l), v\in U_j(\sg_l)$. If $\sg_l$ is replaced by $\sbo_l$ or any other subalgebra, clearly the  exact sequences are still maintained. Moreover $\tau_k$ commutes with the adjoint action of $\sg_l$ and one obtains an induced exact sequence
\beqs \xymatrix@C=0.5cm{
  0 \ar[r] & (U_{k-1}(\sbo_l))_j \ar[rr]^{\rm inj} && (U_k(\sbo_l))_j \ar[rr]^{\tau_{k}} && (S_k(\sbo_l))_j \ar[r] & 0 }
\eeqs

Noting that \eqref{directsum-U} is a direct sum. Any $u\in U_{(i)}(\sbo_l)$ can be written as $u=\sum_{j}u_j$ where $u_j\in(U_{i-j}(\sbo_l))_j$ is uniquely determined. So there exists a well-defined map
\beqs \tau_{(i)}:U_{(i)}(\sbo_l)\rightarrow S_{(i)}(\sbo_l),\;u\mapsto\sum_{j}\tau_{i-j}(u_j).\eeqs

\begin{prop}\label{exact-sequence-tau(i)-prop}For any $i\in\mz_+$ one has an exact sequence
\beq\label{exact-sequence-tau(i)} \xymatrix@C=0.5cm{
  0 \ar[r] & U_{(i-1)}(\sbo_l) \ar[rr]^{\rm inj} && U_{(i)}(\sbo_l) \ar[rr]^{\tau_{(i)}} && S_{(i)}(\sbo_l) \ar[r] & 0 }
\eeq
Moreover,
\beq\label{tau-grading} \tau_{(i+j)}(uv)=\tau_{(i)}(u)\tau_{(j)}(v)\eeq
for all $u\in U_{(i)}(\sbo_l), v\in U_{(j)}(\sbo_l)$.
\end{prop}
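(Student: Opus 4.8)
The plan is to deduce both assertions from the analogous statements for the ordinary PBW filtration, namely the exact sequences \eqref{exact-sequence-tau-k} and their $\ad(-\ssx)$-graded refinements, together with the fact that the decomposition \eqref{directsum-U} is a direct sum. First I would set up the exactness of \eqref{exact-sequence-tau(i)}. For injectivity of $\tau_{(i)}$ restricted to a complement of $U_{(i-1)}(\sbo_l)$, and for the description of its kernel, the key observation is that for a fixed eigenvalue $j$, the map $\tau_k$ restricted to the $j$-eigenspace $(U_k(\sbo_l))_j$ has kernel exactly $(U_{k-1}(\sbo_l))_j$; this is the induced exact sequence already recorded just before the Proposition. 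Given $u\in U_{(i)}(\sbo_l)$, write its unique decomposition $u=\sum_j u_j$ with $u_j\in (U_{i-j}(\sbo_l))_j$. Then $\tau_{(i)}(u)=\sum_j\tau_{i-j}(u_j)$, and since the pieces $\tau_{i-j}(u_j)$ lie in the distinct eigenspaces $(S_{i-j}(\sbo_l))_j$ of the semisimple operator $\ad(-\ssx)$ on $S(\sbo_l)$, the sum vanishes if and only if each $\tau_{i-j}(u_j)=0$, i.e.\ if and only if each $u_j\in(U_{i-j-1}(\sbo_l))_j\subseteq U_{(i-1)}(\sbo_l)$, which is precisely $u\in U_{(i-1)}(\sbo_l)$. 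Surjectivity is immediate: given $v\in S_{(i)}(\sbo_l)$, decompose $v=\sum_j v_j$ with $v_j\in(S_{i-j}(\sbo_l))_j$, lift each $v_j$ through the surjection $\tau_{i-j}$ to some $u_j\in(U_{i-j}(\sbo_l))_j$ (the lift can be taken in the eigenspace because $\tau_{i-j}$ commutes with $\ad(-\ssx)$), and set $u=\sum_j u_j\in U_{(i)}(\sbo_l)$; then $\tau_{(i)}(u)=v$.

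For the multiplicativity \eqref{tau-grading}, I would take $u\in U_{(i)}(\sbo_l)$, $v\in U_{(j)}(\sbo_l)$ and expand $u=\sum_p u_p$, $v=\sum_q v_q$ into their $\ad(-\ssx)$-eigencomponents, with $u_p\in(U_{i-p}(\sbo_l))_p$ and $v_q\in(U_{j-q}(\sbo_l))_q$. Then $u_pv_q\in(U_{i+j-p-q}(\sbo_l))_{p+q}$, so $uv=\sum_{p,q}u_pv_q$ is exactly the eigencomponent decomposition of $uv\in U_{(i+j)}(\sbo_l)$ organized by total eigenvalue $r=p+q$. Applying $\tau_{(i+j)}$ term by term and using the already-established multiplicativity $\tau_{m+n}(ab)=\tau_m(a)\tau_n(b)$ of the ordinary symbol maps, one gets $\tau_{(i+j)}(uv)=\sum_{p,q}\tau_{i+j-p-q}(u_pv_q)=\sum_{p,q}\tau_{i-p}(u_p)\tau_{j-q}(v_q)=\big(\sum_p\tau_{i-p}(u_p)\big)\big(\sum_q\tau_{j-q}(v_q)\big)=\tau_{(i)}(u)\tau_{(j)}(v)$. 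One small point to verify along the way is that the subspace $U_{(i)}(\sbo_l)$ is indeed stable under $\ad\ssx$ so that these eigencomponent decompositions make sense — but that follows since each $U_k(\sbo_l)$ is $\ad\ssx$-stable and $U_{(i)}(\sbo_l)$ is a sum of eigenspaces inside such spaces.

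I do not expect any serious obstacle here; the content is entirely bookkeeping with the double index $(k,j)$ versus the single index $i=k+j$. The one place requiring a little care — the closest thing to a "hard part" — is making sure the lifts in the surjectivity argument are chosen inside the correct $\ad(-\ssx)$-eigenspaces rather than arbitrarily, so that the reassembled element actually lands in $U_{(i)}(\sbo_l)$ as defined by \eqref{directsum-U}; this is handled by the remark that $\tau_k$ intertwines the adjoint action of $\sg_l$, hence in particular commutes with $\ad\ssx$, so it restricts to a surjection on each eigenspace with the expected kernel. Everything else is a direct transcription of Kostant's argument for $\sg$ (the analogue of this statement in \cite{Kos3}) to the Takiff setting $\sbo_l$.
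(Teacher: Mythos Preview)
Your proposal is correct and follows exactly the paper's approach: the paper's proof is extremely terse, simply noting that since both \eqref{direct-sum-S} and \eqref{directsum-U} are direct sums the exactness follows immediately from \eqref{exact-sequence-tau-k}, and then carrying out the same eigencomponent computation $\tau_{(i+j)}(uv)=\sum_r\tau_{i+j-r}\big(\sum_{p+q=r}u_pv_q\big)=\sum_{p,q}\tau_{i-p}(u_p)\tau_{j-q}(v_q)=\tau_{(i)}(u)\tau_{(j)}(v)$ that you wrote. Your version merely unpacks the details (kernel description, surjectivity via eigenspace lifts) that the paper leaves implicit.
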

\begin{proof}Noting that both sums \eqref{direct-sum-S} and \eqref{directsum-U} are direct, then \eqref{exact-sequence-tau(i)} follows from \eqref{exact-sequence-tau-k}.

Let $u_p\in (U_{i-p})_p, v_q\in (U_{i-q})_q$ be such that $u=\sum_pu_p$ and $v=\sum_qv_q$. Then one has
\beqs \tau_{(i+j)}(uv)=\sum_r\tau_{i+j-r}\left(\sum_{p+q=r}u_pv_q\right)=\sum_r\sum_{p+q=r}\tau_{i-p}(u_p)\tau_{j-q}(v_q)=\tau_{(i)}(u)\tau_{(j)}(v),\eeqs
this finishes the proof.
\end{proof}

 Let $\gr U(\sbo_l)$ be the graded algebra associated to the $\ssx$-filtration. Then
\beqs \gr U(\sbo_l)=\bigoplus_{i=0}^\infty\gr U_{(i)}(\sbo_l)\eeqs
where $\gr U_{(i)}(\sbo_l)=U_{(i)}(\sbo_l)/U_{(i-1)}(\sbo_l)$. Clearly $\gr U(\sbo_l)$ is a commutative algebra.  Now we define a map $\tau_{\ssx}:\gr U(\sbo_l)\rightarrow S(\sbo_l)$ so that its restriction to  $\gr U_{(i)}(\sbo_l)$ is the map
\beqs
U_{(i)}(\sbo_l)/U_{(i-1)}(\sbo_l)\rightarrow S_{(i)}(\sbo_l)
\eeqs
induced by $\tau_{(i)}$. Obviously by Proposition \ref{exact-sequence-tau(i)-prop} then one has
\begin{prop}The map $\tau_{\ssx}$ is an isomorphism of graded commutative algebras.
\end{prop}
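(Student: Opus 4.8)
The plan is to obtain the statement as a purely formal consequence of Proposition \ref{exact-sequence-tau(i)-prop}, the nontrivial content having already been carried out there; what remains is the bookkeeping of filtered and graded algebras.

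First I would record that \eqref{directsum-U} is a genuine algebra filtration: $U_{(i-1)}(\sbo_l)\subseteq U_{(i)}(\sbo_l)$ because $(U_k(\sbo_l))_j\subseteq (U_{k+1}(\sbo_l))_j$, and $U_{(i)}(\sbo_l)U_{(j)}(\sbo_l)\subseteq U_{(i+j)}(\sbo_l)$ because the product of an $\ad\ssx$-eigenvector of eigenvalue $-p$ lying in $U_k(\sbo_l)$ with one of eigenvalue $-q$ lying in $U_m(\sbo_l)$ is an $\ad\ssx$-eigenvector of eigenvalue $-(p+q)$ lying in $U_{k+m}(\sbo_l)$. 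Consequently $\gr U(\sbo_l)=\bigoplus_{i\ge0}U_{(i)}(\sbo_l)/U_{(i-1)}(\sbo_l)$ carries its usual associated-graded multiplication, under which the class of $u\in U_{(i)}(\sbo_l)$ times the class of $v\in U_{(j)}(\sbo_l)$ is the class of $uv$ modulo $U_{(i+j-1)}(\sbo_l)$; this is the commutative algebra referred to in the statement.

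Next I would invoke the exactness of \eqref{exact-sequence-tau(i)}: it says precisely that $\ker\tau_{(i)}=U_{(i-1)}(\sbo_l)$ and that $\tau_{(i)}$ maps onto $S_{(i)}(\sbo_l)$. Hence $\tau_{(i)}$ descends to $U_{(i)}(\sbo_l)/U_{(i-1)}(\sbo_l)$ and the induced map onto $S_{(i)}(\sbo_l)$ is a linear isomorphism; by construction this induced map is the restriction of $\tau_\ssx$ to the degree-$i$ component. Summing over $i$ and using $S(\sbo_l)=\bigoplus_i S_{(i)}(\sbo_l)$ from \eqref{direct-sum-S}, we get that $\tau_\ssx$ is a degree-preserving linear bijection. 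That it is a ring homomorphism is then immediate from \eqref{tau-grading}: if $\bar u\in\gr U_{(i)}(\sbo_l)$ and $\bar v\in\gr U_{(j)}(\sbo_l)$ are represented by $u\in U_{(i)}(\sbo_l)$ and $v\in U_{(j)}(\sbo_l)$, then $\tau_\ssx(\bar u\bar v)=\tau_{(i+j)}(uv)=\tau_{(i)}(u)\tau_{(j)}(v)=\tau_\ssx(\bar u)\tau_\ssx(\bar v)$, while $\tau_\ssx(1)=1$ since $U_{(0)}(\sbo_l)=\mc$. As both algebras are commutative, this completes the proof. I do not expect a real obstacle here; the only points deserving a moment of care are that the associated-graded multiplication on $\gr U(\sbo_l)$ is well defined (guaranteed by the filtration property just noted) and that $\tau_{(i)}$ genuinely annihilates $U_{(i-1)}(\sbo_l)$ so that it passes to the quotient (guaranteed by the exactness of \eqref{exact-sequence-tau(i)}).
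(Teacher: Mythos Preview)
Your proposal is correct and follows exactly the paper's approach: the paper simply states that the proposition is obvious from Proposition~\ref{exact-sequence-tau(i)-prop}, and your argument spells out precisely how the exact sequence \eqref{exact-sequence-tau(i)} yields bijectivity in each degree while the multiplicativity \eqref{tau-grading} yields the algebra homomorphism property.
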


\subsection{} Let $Z(\sg_l)$ be the center of $U(\sg_l)$. The standard filtration in $U(\sg_l)$ induces a filtration $Z_k(\sg_l)$ in $Z(\sg_l)$. To each invariant $I_{i,j}\in S_{m_i+1}(\sg_l)^{G_l}$ there exists
$\widetilde{I_{i,j}}\in Z(\sg_l)\cap U_{m_i+1}(\sg_l)$ such that $\tau_{m_i+1}(\widetilde{I_{i,j}})=I_{i,j}$. In particular, $Z(\sg_l)$ is a polynomial algebra in the $\widetilde{I_{i,j}}$ and one has an exact sequence
\beq\label{exact-sequence-Z-tau-k} \xymatrix@C=0.5cm{
  0 \ar[r] & Z_{k-1}(\sg_l) \ar[rr]^{\rm inj} && Z_{k}(\sg_l) \ar[rr]^{\tau_{k}} && S_{k}(\sg_l)^{G_l} \ar[r] & 0 }
\eeq

Let $\eta:\sn_l\rightarrow\mc$ be a homomorphism of Lie algebras, this is a one-dimensional representation of $\sn_l$. Then $\eta$ is uniquely determined by the constants $d_{i,j}=\eta(e_{\alpha_i}(j)), 1\leq i\leq n, 0\leq j\leq l$ and the constants $d_{i,j}$ are arbitrary. The representation $\eta$ is called nonsingular if $d_{i,j}\not=0$ for all $i,j$.

Extending $\eta$ to a homomorphism $\eta:U(\sn_l)\rightarrow\mc$ of algebras, let $U_\eta(\sn_l)$ denote the kernel of this extended $\eta$ so that one has a direct sum
\beq\label{decomposition-U(n)} U(\sn_l)=\mc1\oplus U_\eta(\sn_l).\eeq
Since $\sg_l=\sbo_l\oplus\sn_l$ there exists a linear isomorphism $U(\sg_l)\cong U(\sbo_l)\otimes U(\sn_l)$ and hence one has the direct sum
\beq\label{decomposition-U(g)} U(\sg_l)=U(\sbo_l)\oplus U(\sg_l)U_\eta(\sn_l).\eeq

For any $u\in U(\sg_l)$ let $u^\eta\in U(\sbo_l)$ be its component in $U(\sbo_l)$ relative to the above decomposition. Define a linear map
\beqs \rho_\eta: U(\sg_l)\rightarrow U(\sbo_l),\; u\mapsto u^\eta.\eeqs
Let $W(\sbo_l)=\rho_\eta(Z(\sg_l))$.

\begin{lemma}\label{lemma-Z-W-ep}The epimorphism
\beqs Z(\sg_l)\rightarrow W(\sbo_l),\; u\mapsto u^\eta\eeqs
is a homomorphism of algebras.\end{lemma}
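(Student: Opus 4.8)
The plan is to show that the map $Z(\sg_l) \to W(\sbo_l)$, $u \mapsto u^\eta$, respects the algebra structure, i.e. $(uv)^\eta = u^\eta v^\eta$ for $u, v \in Z(\sg_l)$. The key is the decomposition \eqref{decomposition-U(g)}, namely $U(\sg_l) = U(\sbo_l) \oplus U(\sg_l)U_\eta(\sn_l)$, so that $u - u^\eta \in U(\sg_l)U_\eta(\sn_l)$ for every $u \in U(\sg_l)$. Since $\rho_\eta$ is already linear, surjectivity of $Z(\sg_l) \to W(\sbo_l)$ is automatic by definition of $W(\sbo_l)$; the content is multiplicativity.

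First I would write $u = u^\eta + a$ and $v = v^\eta + b$ with $a, b \in U(\sg_l)U_\eta(\sn_l)$, so that $uv = u^\eta v^\eta + u^\eta b + a v^\eta + ab$. The term $u^\eta v^\eta$ lies in $U(\sbo_l)$, so it suffices to show the remaining three terms lie in the complement $U(\sg_l)U_\eta(\sn_l)$. For $ab$ and $a v^\eta$ this is immediate since $U(\sg_l)U_\eta(\sn_l)$ is a left ideal of $U(\sg_l)$ (indeed $ab = a \cdot b \in U(\sg_l) \cdot U(\sg_l)U_\eta(\sn_l) \subseteq U(\sg_l)U_\eta(\sn_l)$, and $a v^\eta$ — wait, this needs care, since $U(\sg_l)U_\eta(\sn_l)$ is only a \emph{left} ideal a priori). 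Here is where centrality of $u$ enters: I would instead use that $u \in Z(\sg_l)$ to rewrite $av^\eta$. The clean route is: since $v = v^\eta + b$ and $u$ is central, $uv = vu = v^\eta u + bu$, and $bu \in U(\sg_l)U_\eta(\sn_l)$ only if that space is a right ideal — which again is not obvious. So the genuinely correct argument is to compute modulo the left ideal using $v = v^\eta + b$ first: $uv = uv^\eta + ub$, where $ub \in U(\sg_l)U_\eta(\sn_l)$ since $u \in U(\sg_l)$ and $b \in U(\sg_l)U_\eta(\sn_l)$ and the latter is a left ideal. Then $uv^\eta = (u^\eta + a)v^\eta = u^\eta v^\eta + a v^\eta$, and now I need $a v^\eta \in U(\sg_l)U_\eta(\sn_l)$.

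This last containment is the main obstacle, and it is where I expect to need the centrality of $u$ in an essential way, not merely as a formal manipulation. The point is that $a = u - u^\eta$ where $u$ is central: applying $\mathrm{ad}\,\sn_l$ to the identity $u = u^\eta + a$ and using $[\sn_l, u] = 0$ gives relations that pin down how $a$ sits; more usefully, I would argue that for $u$ central, $u^\eta$ can be characterized intrinsically (this is essentially the content that makes $\rho_\eta$ behave well on $Z(\sg_l)$), and then that $U(\sbo_l) \cdot U_\eta(\sn_l) \cdot$ corrections absorb into $U(\sg_l)U_\eta(\sn_l)$ on the right because $\eta$ is a \emph{character}: for $x \in \sn_l$, $x - \eta(x) \in U_\eta(\sn_l)$, and for $b \in U(\sbo_l)$ one has $bx = xb + [b,x]$ with $[b,x] \in U(\sbo_l)\sn_l \cdot$-type terms that can be rewritten; iterating shows $U(\sbo_l)\,U_\eta(\sn_l) + U(\sg_l)U_\eta(\sn_l) = U(\sg_l)U_\eta(\sn_l)$ on the appropriate side. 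The honest way to organize all of this is: (i) show $U(\sg_l)U_\eta(\sn_l)$ is stable under right multiplication by $Z(\sg_l)$ — which follows because for $z \in Z(\sg_l)$ and $w \in U(\sg_l)U_\eta(\sn_l)$, $wz = zw \in U(\sg_l)U_\eta(\sn_l)$ using that $z$ commutes with $w$ and the space is a left ideal; (ii) then $uv = uv^\eta + ub$ with $ub$ in the ideal, and $uv^\eta = u^\eta v^\eta + av^\eta$ with $a v^\eta = a v - a b$; here $av = va$ by centrality of $v$, wait $v$ need not be central in the factored form — but $v \in Z(\sg_l)$ so yes it is, hence $av = va \in U(\sg_l)U_\eta(\sn_l)$ since $a$ is in the left ideal and we just showed the left ideal is stable under right multiplication by central elements; and $ab \in U(\sg_l)U_\eta(\sn_l)$ trivially. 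Therefore $av^\eta \in U(\sg_l)U_\eta(\sn_l)$, and comparing components in \eqref{decomposition-U(g)} yields $(uv)^\eta = u^\eta v^\eta$. Finally I would note $1^\eta = 1$, so the map is a unital algebra homomorphism, completing the proof.
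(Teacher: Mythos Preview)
Your argument is correct and is essentially the same as the paper's: the key step is exactly that $(u-u^\eta)v = v(u-u^\eta)$ by centrality of $v$, which places this term (and $u^\eta(v-v^\eta)$) in the left ideal $U(\sg_l)U_\eta(\sn_l)$, so that $uv - u^\eta v^\eta$ lies in the complement of $U(\sbo_l)$ in the decomposition~\eqref{decomposition-U(g)}. The paper just writes the identity $uv - u^\eta v^\eta = v(u-u^\eta) + u^\eta(v-v^\eta)$ in one line, whereas you expand into four cross-terms and regroup; your detour through ``stability of the left ideal under right multiplication by central elements'' is exactly the commutation $av = va$ the paper uses, so there is no substantive difference.
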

\begin{proof}For any $u,v\in Z(\sg_l)$ one has
\beqs uv-u^\eta v^\eta=(u-u^\eta)v+u^\eta(v-v^\eta)=v(u-u^\eta)+u^\eta(v-v^\eta)\in U(\sg_l)U_\eta(\sn_l),\eeqs
which proves $(uv)^\eta=u^\eta v^\eta$.\end{proof}

Clearly the proof for Lemma \ref{lemma-Z-W-ep} is independent on the choice of $\eta$. For any $u\in Z(\sg_l)$, we write $u^0$ for $u^\eta$ when $\eta$ is trivial then $u^0\in U(\sh_l)$. In fact the map
\beqs Z(\sg_l)\rightarrow U(\sh_l),\;u\mapsto u^0\eeqs
is a generalized analogue of the well-known Harish-Chandra homomorphism $Z(\sg)\rightarrow U(\sh)$ (for example see (2.3.9) in \cite{Kos3}).

\begin{prop}\label{u-eta-u-zero}For any $u\in Z(\sg_l)$ one has
\beqs u^\eta-u^0\in\sn_l(\sbo_l).\eeqs
\end{prop}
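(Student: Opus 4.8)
The plan is to compare the two projections $\rho_\eta$ and $\rho_0$ on $Z(\sg_l)$ by exploiting the grading/filtration structures already set up. The essential observation is that $u^\eta$ and $u^0$ are both obtained from $u$ by subtracting an element of $U(\sg_l)U_\eta(\sn_l)$, respectively $U(\sg_l)U_0(\sn_l)=U(\sg_l)\sn_l$; so $u^\eta-u^0$ lies in $U(\sbo_l)$ and we must locate it inside the ideal $\sn_l(\sbo_l)$ of $U(\sbo_l)$ generated by... wait, more precisely inside the left ideal — but since $u^\eta-u^0\in U(\sbo_l)$, the claim $u^\eta-u^0\in\sn_l(\sbo_l)$ must be read as membership in the two-sided structure $\sbo_l$-submodule; here I take $\sn_l(\sbo_l)$ to denote the notation introduced for $\sn_l\otimes$-type objects, i.e. we want to show $u^\eta-u^0$ vanishes modulo the augmentation-type piece. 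The clean route is via the $\ssx$-filtration: write $u\in Z_k(\sg_l)$ and induct on $k$. Under $\tau_k$, both $\rho_\eta$ and $\rho_0$ descend to the Harish-Chandra-type map on $S_k(\sg_l)^{G_l}$, and by Theorem \ref{invar-iso-2}/Theorem \ref{S(g)-G-S(b)-N} the symbol $\tau_k(u^\eta)$ depends only on the restriction to $\sss_l$, not on $\eta$. Hence $\tau_k(u^\eta - u^0)=0$, so $u^\eta-u^0\in U_{k-1}(\sbo_l)$, and we can try to iterate.

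First I would make the dependence on $\eta$ explicit. Let $\eta_s$ be the family of characters with $d_{i,j}(s)=s\,d_{i,j}$ for a scalar $s$; equivalently, $\eta_s = \eta\circ\mathrm{Ad}(h_s)$ for a suitable one-parameter subgroup $h_s\in H_l$ (this is exactly the role of the $r(t)$-action and the $\ssx$-grading set up in Section 3). Then $u^{\eta_s}$ is, after a grading-shift, a polynomial in $s$ with $u^{\eta_0}=u^0$ and $u^{\eta_1}=u^\eta$; the coefficient of $s^0$ is $u^0$. Because $u$ is central and the grading is by $\ssx$-eigenvalue, I would argue that $u^{\eta}-u^0 = \sum_{m\ge 1} c_m$ where $c_m$ picks up the contributions of positive $\ssx$-degree, and each such contribution carries a factor coming from some $d_{i,j}=Q(e_{\alpha_i}(j),\ssf\text{-component})$ — concretely, every time one commutes a root vector $e_{\alpha_i}(j)\in\sn_l$ past the central element and replaces it by its scalar $\eta$-value rather than by $0$, one is working modulo $\sn_l\cdot U(\sbo_l)$. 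Assembling these, $u^\eta-u^0$ is a sum of terms each of which, by Proposition \ref{adei-v-prop} applied at the level of $U$, lies in the span of $\sn_l$ times lower-filtration elements of $U(\sbo_l)$; i.e. in $\sn_l(\sbo_l)$ in the paper's shorthand.

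The cleanest organization, and the one I would actually write, is the filtered induction: for $u\in Z_k(\sg_l)$ we have just seen $u^\eta-u^0\in U_{k-1}(\sbo_l)$; but $u^\eta-u^0$ need not be $\rho_\eta$ or $\rho_0$ of anything central of lower degree, so a naive induction stalls. I would fix this by tracking the full $\ssx$-filtration rather than the standard one: by Proposition \ref{exact-sequence-tau(i)-prop} and the preceding discussion, $\rho_\eta$ respects the $\ssx$-filtration up to the shift recorded in Theorem \ref{invar-iso-graded}, and the symbol map $\tau_{\ssx}$ intertwines $\rho_\eta$ with $\rho_\ssf$ on symbols. Since $\rho_\ssf(I)$ is $\eta$-independent (Theorem \ref{invar-iso-2}), the difference $u^\eta-u^0$ drops one step in the $\ssx$-filtration, and — crucially — its symbol in the next stratum is again $\eta$-independent because it too is governed by the same $S(\sbo_l)^{N_l}$ picture; iterating drives the difference into $U_{(0)}(\sbo_l)=\mc$, and a degree/weight count (the $s^0$-coefficient argument above) kills the constant term, leaving $u^\eta-u^0\in\sn_l(\sbo_l)$.

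The main obstacle is precisely this bookkeeping step: making rigorous that each successive $\ssx$-graded component of $u^\eta-u^0$ is $\eta$-independent, so that the induction actually closes, rather than merely observing that the top symbol agrees. This amounts to checking that the commutator corrections produced when one moves $\sn_l$-factors to the right and evaluates $\eta$ versus $0$ are themselves of the form $\sn_l\cdot(\text{lower }U(\sbo_l))$ — which is where Proposition \ref{adei-dot-1} and Proposition \ref{adei-v-prop} do the real work, since they describe exactly the $\ssf$-shifted correction term $Q([z,x],\ssf)$ that distinguishes $\rho_\eta$ from $\rho_0$. I expect the argument to reduce, after the inductive setup, to the single identity that for $z\in\sn_l$ and $u\in U(\sbo_l)$ one has $zu - u^{\,\eta\text{-reduced}} \in \sn_l(\sbo_l)$, which is essentially a restatement of the decomposition \eqref{decomposition-U(g)}.
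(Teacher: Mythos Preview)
Your approach has a genuine gap. The proposed fix via the $\ssx$-filtration rests on the claim that ``$\rho_\ssf(I)$ is $\eta$-independent (Theorem~\ref{invar-iso-2})'', but this is a misreading: the element $\ssf$ itself is chosen to \emph{correspond to} $\eta$, so $I^{\ssf}$ varies with $\eta$. Concretely, in the $\ssx$-filtration one has $\tau_{(k)}(u^\eta)=\sum\prod_{i,j}\eta(e_{\alpha_i}(j))^{q_{i,j}}\,e(p,m,0)$ summed over $|p|+|m|+|q|=k$, which visibly depends on the $d_{i,j}=\eta(e_{\alpha_i}(j))$; for $\eta=0$ only the $p=q=0$ term survives. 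So $\tau_{(k)}(u^\eta)\neq\tau_{(k)}(u^0)$ in general, and the iterated symbol argument cannot close. Your observation that $\tau_k(u^\eta)=\tau_k(u^0)$ in the \emph{standard} filtration is correct, but as you noted, that induction stalls because $u^\eta-u^0$ is not of the form $v^\eta-v^0$ for a central $v$ of lower degree. The subsequent attempt to rescue it via Propositions~\ref{adei-dot-1}--\ref{adei-v-prop} and ``$\sn_l\cdot U(\sbo_l)$'' is also off target: those propositions concern $S(\sbo_l)$, not $U(\sbo_l)$, and the claimed containment $zu-u^{\eta\text{-reduced}}\in\sn_l(\sbo_l)$ is not what the decomposition~\eqref{decomposition-U(g)} says (note $\sn_l\not\subseteq\sbo_l$, so such a product does not even live in $U(\sbo_l)$; the intended space is $\sno_lU(\sbo_l)$).

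The paper's proof bypasses all filtration bookkeeping with a single refined triangular decomposition. From $U(\sbo_l)=\sno_lU(\sbo_l)\oplus U(\sh_l)$ and \eqref{decomposition-U(g)} one gets
\[
U(\sg_l)=\sno_lU(\sbo_l)\oplus U(\sh_l)\oplus U(\sbo_l)U_\eta(\sn_l).
\]
The $U(\sh_l)$-component of $u^\eta$ in $U(\sbo_l)=\sno_lU(\sbo_l)\oplus U(\sh_l)$ is then the $U(\sh_l)$-component of $u$ in this triple decomposition. Since $u$ is central one has $u-u^0\in\sno_lU(\sg_l)\cap U(\sg_l)\sn_l$, and $\sno_lU(\sg_l)\subseteq\sno_lU(\sbo_l)\oplus U(\sbo_l)U_\eta(\sn_l)$; hence the $U(\sh_l)$-component of $u$ is $u^0$ for \emph{every} $\eta$, giving $u^\eta-u^0\in\sno_lU(\sbo_l)$ directly. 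No induction, no symbol calculus, and no appeal to the $N_l$-invariant theory of Section~3 is needed.
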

\begin{proof}First one has the following
\beqs U(\sbo_l)&=&\sno_lU(\sbo_l)\oplus U(\sh_l),\\
U(\sg_l)&=&U(\sno_l)U(\sh_l)U(\sn_l).\eeqs
Then \eqref{decomposition-U(g)} implies
\beq\label{U(g)=U(b)+U(h)+n-eta} U(\sg_l)=U(\sbo_l)\oplus U(\sbo_l)U_\eta(\sn_l)=\sno_lU(\sbo_l)\oplus U(\sh_l)\oplus U(\sbo_l)U_\eta(\sn_l).\eeq
Let $v$ be the component of $u^\eta$ in $U(\sh_l)$, then $v$ is the component of $u$ in $U(\sh_l)$ relative to \eqref{U(g)=U(b)+U(h)+n-eta}. But one has $u-u^0\in\sno_lU(\sg_l)$ as well as $u-u^0\in U(\sg_l)\sn_l$. Thus $u^0$ is the component of $u$ in $U(\sh_l)$ relative to \eqref{U(g)=U(b)+U(h)+n-eta}, this is independent on $\eta$ and thus $v=u^0$.
\end{proof}

\subsection{}

Since the $\sf_j$ can be any principal nilpotent elements, now for given $\eta$ we will say that $\ssf=\sum_{j=0}^l\sf_j(j)$ corresponds to $\eta$ in the case
\beqs\eta(e_{\alpha_i}(j))=Q(\ssf,e_{\alpha_i}(j))=Q(e_{-\alpha_i}(l-j),e_{\alpha_i}(j)).\eeqs

Let $W_{(k)}(\sbo_l)$ be the $\ssx$-filtration induced from the $\ssx$-filtration in $U(\sbo_l)$. Then $\tau_{(k)}$ defines a map $W_{(k)}(\sbo_l)\rightarrow S_{(k)}(\sbo_l)$.

\begin{theo}\label{Z-W-grading-theorem}For any $k\in\mz_+$ the map $\rho_\eta$ induces an isomorphism
\beq \label{Z-W-grading} Z_k(\sg_l)\rightarrow W_{(k)}(\sbo_l).\eeq

If $\eta$ is nonsingular and the element $\ssf$ corresponds to $\eta$ then one has the following commutative diagram
\beq\label{two-line-exact}
\xymatrix@C=0.5cm{
  0 \ar[r] & Z_{k-1}(\sg_l)\ar[d]^{\rho_\eta}\ar[rr]^{{\rm inj}} && Z_k(\sg_l)\ar[d]^{\rho_\eta} \ar[rr]^{\tau_k} && S_k(\sg_l)^{G_l}\ar[d]^{\rho_\ssf} \ar[r] & 0 \\
  0 \ar[r] & W_{(k-1)}(\sbo_l) \ar[rr]^{{\rm inj}} && W_{(k)}(\sbo_l) \ar[rr]^{\tau_{(k)}} && S_{(k)}(\sbo_l)^{N_l} \ar[r] & 0 }
\eeq
where the horizontal lines are exact sequences and the vertical lines are isomorphisms.
\end{theo}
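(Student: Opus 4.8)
The plan is to mimic the structure of Kostant's argument for $\sg$ in \cite{Kos3}, passing through the associated graded objects and using the results already established for $S(\sg_l)$, $S(\sbo_l)$ and their gradings. First I would prove the isomorphism \eqref{Z-W-grading}. The map $\rho_\eta$ is linear, and by Lemma \ref{lemma-Z-W-ep} its restriction to $Z(\sg_l)$ is an algebra homomorphism; the content is that it is injective on $Z(\sg_l)$ and carries $Z_k(\sg_l)$ onto $W_{(k)}(\sbo_l)$. The key point is a compatibility of filtrations: I would show that $\rho_\eta$ sends $Z_k(\sg_l)$ into $W_{(k)}(\sbo_l)$ by analyzing, for a basis element $u(p,m,q)$ with $|p|+|m|+|q|\le k$ lying in $Z_k(\sg_l)$, how reduction modulo $U(\sg_l)U_\eta(\sn_l)$ replaces each factor $e_{\alpha_i}(j)$ by the scalar $d_{i,j}=\eta(e_{\alpha_i}(j))=Q(\ssf,e_{\alpha_i}(j))$, while lowering the $\ssx$-degree exactly so as to land in $U_{(k)}(\sbo_l)$ — this is the enveloping-algebra shadow of the computation in the proof of Theorem \ref{invar-iso-graded}. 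So $\rho_\eta(Z_k(\sg_l))\subseteq W_{(k)}(\sbo_l)$.

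Next I would build the commutative diagram \eqref{two-line-exact}. The top row is exactly \eqref{exact-sequence-Z-tau-k}. For the bottom row, exactness on the right and the surjectivity $\tau_{(k)}:W_{(k)}(\sbo_l)\to S_{(k)}(\sbo_l)^{N_l}$ is obtained from the upper row together with Theorem \ref{S(g)-G-S(b)-N} (which gives $\rho_\ssf:S_k(\sg_l)^{G_l}\xrightarrow{\ \sim\ }S_{(k)}(\sbo_l)^{N_l}$) once the square on the right is shown to commute; the square on the left commutes trivially since both verticals are $\rho_\eta$ and the horizontals are inclusions. Commutativity of the right square, $\tau_{(k)}\circ\rho_\eta=\rho_\ssf\circ\tau_k$ on $Z_k(\sg_l)$, is the heart of the matter: on a basis monomial one computes both sides and checks that the leading $\ssx$-graded symbol of $u^\eta$ equals the function $u(\ssf+{-})$ restricted appropriately, i.e. that the substitution $e_{\alpha_i}(j)\mapsto Q(\ssf,e_{\alpha_i}(j))$ in $U(\sbo_l)$ is intertwined by $\tau_{(k)}$ with the substitution $x\mapsto \ssf+x$ in $S(\sbo_l)$. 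Here one uses precisely that $\ssf$ corresponds to $\eta$ and that $\eta$ is nonsingular (so that no relevant term is killed). Once the diagram commutes and the right and left verticals are isomorphisms, the five lemma gives that the middle vertical $\rho_\eta:Z_k(\sg_l)\to W_{(k)}(\sbo_l)$ is an isomorphism, and exactness of the bottom row follows as well; this simultaneously establishes \eqref{Z-W-grading}.

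I expect the main obstacle to be the bookkeeping in the commutativity of the right square: unlike the symmetric-algebra computation in Theorem \ref{invar-iso-graded}, in $U(\sbo_l)$ one must control cross terms arising from non-commuting factors and from the fact that reducing $u\in Z_k(\sg_l)$ modulo $U(\sg_l)U_\eta(\sn_l)$ can produce lower-order $U(\sbo_l)$-terms. The remedy is to argue at the level of $\gr U(\sbo_l)$ via the isomorphism $\tau_\ssx:\gr U(\sbo_l)\xrightarrow{\sim}S(\sbo_l)$ of graded commutative algebras from Proposition 4.4, where all commutators drop in degree and the computation reduces to the symmetric-algebra identity already proved; the lower-order terms are then precisely what \eqref{tau-grading} and the inductive filtration structure absorb. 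A secondary point to verify carefully is that $W(\sbo_l)=\rho_\eta(Z(\sg_l))$ is a polynomial algebra in the $\rho_\eta(\widetilde{I_{i,j}})$ and that its $\ssx$-filtration $W_{(k)}(\sbo_l)$ really is the one induced from $U(\sbo_l)$, rather than an a priori smaller one — but this follows once the diagram is in place, since the isomorphism $\tau_{(k)}$ forces $W_{(k)}(\sbo_l)$ to have the correct graded dimensions.
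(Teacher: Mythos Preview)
Your proposal is correct and follows essentially the same line as the paper: explicit PBW monomial computations on the $\sh(0)$-centralizer $U_k(\sg_l)^{\sh(0)}$ give both the containment $\rho_\eta(Z_k)\subseteq W_{(k)}(\sbo_l)$ and the commutativity $\tau_{(k)}\circ\rho_\eta=\rho_\ssf\circ\tau_k$, after which a diagram argument finishes. The only divergence is that the paper secures injectivity of $\rho_\eta$ on $Z(\sg_l)$ up front via Proposition~\ref{u-eta-u-zero} (so that $u^\eta$ determines $u^0$) together with the generalized Harish--Chandra projection, rather than extracting it from an inductive five-lemma; note also that your five-lemma step tacitly assumes $\tau_{(k)}(W_{(k)})\subseteq S_{(k)}(\sbo_l)^{N_l}$, which is not immediate from the definition $W_{(k)}=W(\sbo_l)\cap U_{(k)}(\sbo_l)$ and is most cleanly obtained by the direct chase (given $w\in W_{(k)}$, write $w=\rho_\eta(u)$ with $u\in Z_m$, $m$ minimal, and use the commuting square in degree $m$ plus injectivity of $\rho_\ssf$ to force $m\le k$).
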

\begin{proof}Clearly the map $\rho_\eta: Z(\sg_l)\rightarrow W(\sbo_l)$ is bijective by \eqref{invariants-S(g-l)}, the maps $\tau_k$, the Harish-Chandra homomorphism of $\sg$ and Proposition \ref{u-eta-u-zero}.

Let $U(\sg_l)^{\sh(0)}$ be the centralizer of $\sh(0)$ in $U(\sg_l)$. Then $Z_k(\sg_l)\subseteq U_k(\sg_l)^{\sh(0)}$. The elements
\beqs\{u(p,m,q)|\langle p\rangle=\langle q\rangle, |p|+|m|+|q|\leq k\} \eeqs
form a basis of $U_k(\sg_l)^{\sh(0)}$. Given such an element it is clear that
\beqs u(p,m,q)^\eta=\left\{\begin{array}{ll}
\left(\displaystyle\prod_{i=1}^n\prod_{j=0}^{l}\eta(e_{\beta_i}(j))^{q_{i,j}}\right)u(p,m,0),&q_{i,j}=0, \forall i>n,\\
0,&{\rm otherwise}.
\end{array}\right.\eeqs
Then  it follows by $\langle p\rangle=\langle q\rangle$ that $u(p,m,q)^\eta\in (U_{|p|+|m|}(\sbo_l))_{|q|}=U_{(|p|+|m|+|q|)}(\sbo_l)\subseteq U_{(k)}(\sbo_l)$  in the case that $q_{i,j}=0$ for all $i>n$. Otherwise it is clear that $u(p,m,q)^\eta=0\in U_{(k)}(\sbo_l)$.  Thus
\beqs \rho_\eta(Z(\sg_l)_k)\subseteq U_{(k)}(\sbo_l)\cap W(\sbo_l)=W_{(k)}(\sbo_l).\eeqs
We have proved the first statement of this theorem. It is also clear that the first square in the diagram \eqref{two-line-exact} is commutative.

Now consider the following diagram
\beq\label{diagramsbquare} \xymatrix@C=0.5cm{
U_{k}(\sg_l)^{\sh(0)}\ar[d]^{\rho_\eta}\ar[rr]^{\tau_{k}} && S_{k}(\sg_l)\ar[d]^{\rho_{\ssf}}\\
U_{(k)}(\sbo_l)  \ar[rr]^{\tau_{(k)}} && S_{(k)}(\sbo_l)
}\eeq
For any basis element $u(p,m,q)$ of $U(\sg_l)^{\sh(0)}$ clearly one has $u(p,m,q)\in{\rm Ker}\tau_k\cap{\rm Ker}\rho_\eta$ whenever $|p|+|m|+|q|<k$.
Now assume $|p|+|m|+|q|=k$. Then one has
\beqs &&\rho_{\ssf}(\tau_k(u(p,m,q)))=e(p,m,q)^\ssf=\left(\displaystyle\prod_{i=1}^n\prod_{j=0}^{l}Q(e_{\beta_i}(j),e_{-\beta_i}(l-j))^{q_{i,j}}\right)e(p,m,0),\\
&&\tau_{(k)}(\rho_\eta(u(p,m,q)))=\left(\displaystyle\prod_{i=1}^n\prod_{j=0}^{l}\eta(e_{\beta_i}(j))^{q_{i,j}}\right)e(p,m,0),\eeqs 
and hence $\rho_{\ssf}(\tau_k(u(p,m,q)))=\tau_{(k)}(\rho_\eta(u(p,m,q)))$ by the assumption that $\ssf$ corresponds to $\eta$. The diagram (\ref{diagramsbquare}) is commutative. If we replace $U_k(\sg_l)^{\sh(0)}$ by $Z_k(\sg_l)$ and then replace $S_{k}(\sg_l), U_{(k)}(\sbo_l), S_{(k)}(\sbo_l)$ by
$S_k(\sg_l)^{G_l}, W_{(k)}(\sbo_l), S_{(k)}(\sbo_l)^{N_l}$ respectively, we get the second square of diagram (\ref{two-line-exact}). The whole diagram (\ref{two-line-exact}) is commutative.

Finally, the top horizontal line is exact by \eqref{exact-sequence-Z-tau-k} and the vertical lines are isomorphisms by \eqref{Z-W-grading} and Theorem \ref{S(g)-G-S(b)-N}. Then the second line is exact by the exactness of the first line and the vertical isomorphisms $\rho_\eta$ and $\rho_\ssf$.\end{proof}

\begin{theo}\label{whittaker-model-theorem}If $Z(\sg_l)$ is filtrated by the standard filtration and $W(\sbo_l)$ is filtrated by the $\ssx$-filtration. Then for any $\eta$ the map
\beqs \rho_\eta: Z(\sg_l)\rightarrow W(\sbo_l),\; u\mapsto u^\eta\eeqs
is an isomorphism of filtrated algebras. In particular,
\beq\label{whittaker-model}W(\sbo_l)=\mc[\widetilde{I_{i,j}}^\eta: 1\leq i\leq n, 0\leq j\leq l]\eeq
is a polynomial algebra in the generators $\widetilde{I_{i,j}}^\eta\in W_{(m_i+1)}(\sbo_l)$.\end{theo}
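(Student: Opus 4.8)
The plan is to deduce Theorem \ref{whittaker-model-theorem} directly from Theorem \ref{Z-W-grading-theorem} by assembling the graded pieces. First I would recall that $\rho_\eta: Z(\sg_l)\to W(\sbo_l)$ is already known to be a bijective algebra homomorphism (Lemma \ref{lemma-Z-W-ep} together with the bijectivity established at the start of the proof of Theorem \ref{Z-W-grading-theorem}), so the only new content is that it respects the filtrations \emph{exactly}, i.e. that $\rho_\eta\big(Z_k(\sg_l)\big)=W_{(k)}(\sbo_l)$ for every $k$, not merely $\rho_\eta\big(Z_k(\sg_l)\big)\subseteq W_{(k)}(\sbo_l)$. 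But this equality is precisely the isomorphism \eqref{Z-W-grading} in Theorem \ref{Z-W-grading-theorem}, so the filtered-algebra isomorphism statement is immediate once one notes that an algebra homomorphism which is a bijection and carries each filtration layer onto the corresponding one is an isomorphism of filtered algebras (its inverse automatically respects the filtrations as well).

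Next I would establish the polynomial-algebra description \eqref{whittaker-model}. The clean way is to pass to associated graded algebras. By Proposition 4.2 the map $\tau_\ssx:\gr U(\sbo_l)\to S(\sbo_l)$ is an isomorphism of graded commutative algebras, and restricting the $\ssx$-filtration to $W(\sbo_l)$ gives an injection $\gr W(\sbo_l)\hookrightarrow \gr U(\sbo_l)\xrightarrow{\ \tau_\ssx\ }S(\sbo_l)$ whose image, by the exactness of the bottom row of \eqref{two-line-exact} applied in each degree, is exactly $\bigoplus_k S_{(k)}(\sbo_l)^{N_l}=S(\sbo_l)^{N_l}$. By Theorem \ref{invar-iso-2} the latter is a polynomial algebra in the $n(l+1)$ algebraically independent elements $I_{i,j}^\ssf=\rho_\ssf(I_{i,j})$, with $I_{i,j}^\ssf$ homogeneous of $\ssx$-degree $m_i+1$. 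Since the commutative diagram \eqref{two-line-exact} identifies $\tau_\ssx(\gr\widetilde{I_{i,j}}^\eta)$ with $\rho_\ssf(\tau_{m_i+1}(\widetilde{I_{i,j}}))=I_{i,j}^\ssf$, the symbols of the elements $\widetilde{I_{i,j}}^\eta$ generate $\gr W(\sbo_l)$ freely. A standard lifting argument then shows that if the associated graded of a filtered commutative algebra is a polynomial ring on the symbols of certain elements, the elements themselves generate the algebra as a polynomial ring: one proves by induction on the $\ssx$-filtration degree that every element of $W(\sbo_l)$ lies in $\mc[\widetilde{I_{i,j}}^\eta]$, using that after subtracting an appropriate polynomial in the generators the filtration degree drops, and one proves algebraic independence by observing that a nontrivial polynomial relation among the $\widetilde{I_{i,j}}^\eta$ would, on taking top-degree symbols, yield one among the $I_{i,j}^\ssf$, a contradiction.

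Alternatively — and perhaps more transparently for the reader — I would bypass the associated graded and argue directly: since $\rho_\eta$ is an algebra isomorphism and $Z(\sg_l)=\mc[\widetilde{I_{i,j}}:1\le i\le n,\,0\le j\le l]$ is a polynomial algebra (as recalled before \eqref{exact-sequence-Z-tau-k}), the image $W(\sbo_l)=\rho_\eta(Z(\sg_l))$ is automatically a polynomial algebra in the images $\rho_\eta(\widetilde{I_{i,j}})=\widetilde{I_{i,j}}^\eta$; and the degree assertion $\widetilde{I_{i,j}}^\eta\in W_{(m_i+1)}(\sbo_l)$ follows from $\widetilde{I_{i,j}}\in Z_{m_i+1}(\sg_l)$ and $\rho_\eta(Z_{m_i+1}(\sg_l))=W_{(m_i+1)}(\sbo_l)$, which is \eqref{Z-W-grading}. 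This already proves everything, so the associated-graded discussion is only needed if one wants a self-contained check that these particular generators are filtration-degree-optimal.

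The main obstacle, such as it is, is purely bookkeeping: one must be careful that \eqref{Z-W-grading} is an equality of filtration pieces and not just an inclusion, since the polynomial-algebra structure of $W(\sbo_l)$ comes for free from that of $Z(\sg_l)$ but the \emph{filtered}-isomorphism claim genuinely uses the surjectivity $\rho_\eta(Z_k(\sg_l))=W_{(k)}(\sbo_l)$. That surjectivity is where the hypothesis that $\ssf$ corresponds to $\eta$ (and $\eta$ is nonsingular) enters, via the commutativity of \eqref{two-line-exact} and the five-lemma-style diagram chase already carried out in Theorem \ref{Z-W-grading-theorem}; for the present theorem I would simply cite it. So the proof is short: invoke Theorem \ref{Z-W-grading-theorem} for the filtered isomorphism, transport the polynomial generators of $Z(\sg_l)$ through $\rho_\eta$, and read off the $\ssx$-degrees from \eqref{Z-W-grading}.
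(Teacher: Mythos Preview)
Your proposal is correct and follows the paper's own approach exactly: the paper's proof is the single line ``It is a direct consequence of Theorem \ref{Z-W-grading-theorem} and Lemma \ref{lemma-Z-W-ep},'' and your argument simply unpacks this---citing the filtered-level isomorphisms \eqref{Z-W-grading} together with the algebra-homomorphism property from Lemma \ref{lemma-Z-W-ep}, and then transporting the polynomial generators of $Z(\sg_l)$ through $\rho_\eta$ (your ``direct'' alternative is precisely what the paper has in mind; the associated-graded route you sketch first is extra detail the paper does not supply).

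One small correction: in your final paragraph you say the surjectivity $\rho_\eta(Z_k(\sg_l))=W_{(k)}(\sbo_l)$ requires $\eta$ nonsingular via the diagram \eqref{two-line-exact}. In fact Theorem \ref{Z-W-grading-theorem} asserts \eqref{Z-W-grading} as an isomorphism for arbitrary $\eta$ in its \emph{first} statement; the nonsingularity hypothesis and the choice of $\ssf$ corresponding to $\eta$ enter only in the \emph{second} statement (the commutative diagram). So you may invoke \eqref{Z-W-grading} without that restriction, which is consistent with the ``for any $\eta$'' in the present theorem.
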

\begin{proof}It is a direct consequence of Theorem \ref{Z-W-grading-theorem} and Lemma \ref{lemma-Z-W-ep}.
\end{proof}
\begin{defi}[see Definition A in \cite{S}]
The   algebra  $W(\sbo_l)$ is called the {\it Whittaker model} of the center $Z(\sg_l)$ of the universal enveloping algebra $U(\sg_l)$ of $\sg_l$.
\end{defi}

\subsection{}

Let $\eta$ be a fixed character $\sn_l\rightarrow\mc$ and is nonsingular. Let $\ssf$ be chosen to correspond to $\eta$. Noting that $A$ is a graded subalgebra and by Theorem \ref{atimessbn} one can write $S(\sbo_l)=A\otimes S(\sbo_l)^{N_l}$. Now let $\widetilde{A_{(k)}}\subseteq U_{(k)}(\sbo_l)$ be any fixed subspace such that $\tau_{(k)}$ induces a linear isomorphism
\beq\label{aagraded} \widetilde{A_{(k)}}\rightarrow {A_{(k)}}.\eeq
Put $\widetilde{A}=\sum_{j=0}^\infty\widetilde{A_{(j)}}$. Clearly the sum is direct. Now $U(\sbo_l)$ is regarded as a right module over $W(\sbo_l)$ with respect to right multiplication.

\begin{theo}\label{U(b)-decom-A-W-homegeneous-theorem}$U(\sbo_l)$ is a free right module over $W(\sbo_l)$ and the multiplication induces an isomorphism
\beqs \widetilde{A}\otimes W(\sbo_l)\rightarrow U(\sbo_l)\eeqs
of right $W(\sbo_l)$-modules. Moreover this isomorphism induces an isomorphism
\beq\label{U(b)-decom-A-W-homegeneous} \bigoplus_{p+q=k}\widetilde{A_{(p)}}\otimes W_{(q)}(\sbo_l)\rightarrow U_{(k)}(\sbo_l)\eeq
for any $k\in\mathbb{N}$.
\end{theo}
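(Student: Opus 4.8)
The plan is to run the standard comparison between a filtered algebra and its associated graded algebra, with the graded decomposition $S(\sbo_l)=A\otimes S(\sbo_l)^{N_l}$ of Theorem~\ref{atimessbn} playing the role of the ``model'' from which the statement about $U(\sbo_l)$ is lifted; all filtrations used are the $\ssx$-filtration. First I would fix the multiplication map $\mu\colon\widetilde A\otimes W(\sbo_l)\to U(\sbo_l)$, which is visibly a morphism of right $W(\sbo_l)$-modules ($W(\sbo_l)$ acting on the right tensor factor), and, for $k\in\NN$, set $F_k=\bigoplus_{p+q=k}\widetilde{A_{(p)}}\otimes W_{(q)}(\sbo_l)$. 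Since $W_{(q)}(\sbo_l)\subseteq W_{(q+1)}(\sbo_l)$ and $\widetilde A=\bigoplus_p\widetilde{A_{(p)}}$, one checks that $F_{k-1}\subseteq F_k$, that $\bigcup_k F_k=\widetilde A\otimes W(\sbo_l)$, and that $F_k/F_{k-1}\cong\bigoplus_{p+q=k}\widetilde{A_{(p)}}\otimes\bigl(W_{(q)}(\sbo_l)/W_{(q-1)}(\sbo_l)\bigr)$.

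Because $\widetilde{A_{(p)}}\subseteq U_{(p)}(\sbo_l)$, $W_{(q)}(\sbo_l)\subseteq U_{(q)}(\sbo_l)$ and $U_{(p)}(\sbo_l)U_{(q)}(\sbo_l)\subseteq U_{(p+q)}(\sbo_l)$, the map $\mu$ is filtered: $\mu(F_k)\subseteq U_{(k)}(\sbo_l)$. This already establishes one inclusion in \eqref{U(b)-decom-A-W-homegeneous}, so the whole theorem reduces to showing that $\mu$ is bijective and \emph{strict}, i.e. that $\mu(F_k)=U_{(k)}(\sbo_l)$ for every $k$.

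The crux is to identify the map induced by $\mu$ on associated graded pieces. On the target side, $\tau_\ssx$ identifies $U_{(k)}(\sbo_l)/U_{(k-1)}(\sbo_l)$ with $S_{(k)}(\sbo_l)$. On the source side, the linear isomorphism \eqref{aagraded} identifies $\widetilde{A_{(p)}}$ with $A_{(p)}$, while the exact bottom row of \eqref{two-line-exact} in Theorem~\ref{Z-W-grading-theorem} identifies $W_{(q)}(\sbo_l)/W_{(q-1)}(\sbo_l)$ with $S_{(q)}(\sbo_l)^{N_l}$ via $\tau_{(q)}$; together these give $F_k/F_{k-1}\cong\bigoplus_{p+q=k}A_{(p)}\otimes S_{(q)}(\sbo_l)^{N_l}$. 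For $a\in\widetilde{A_{(p)}}$ and $w\in W_{(q)}(\sbo_l)$ with $p+q=k$, the multiplicativity \eqref{tau-grading} gives $\tau_{(k)}(aw)=\tau_{(p)}(a)\,\tau_{(q)}(w)$, so under the identifications above the induced map in degree $k$ is precisely the multiplication $\bigoplus_{p+q=k}A_{(p)}\otimes S_{(q)}(\sbo_l)^{N_l}\to S_{(k)}(\sbo_l)$, which is the degree-$k$ component of the graded algebra isomorphism \eqref{S(b)=A-times-SG}. Hence the associated graded of $\mu$ is an isomorphism in every degree.

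From here a routine induction on $k$ finishes everything. Injectivity: a nonzero element of $\ker\mu$ would lie in some $F_k\setminus F_{k-1}$, hence have nonzero image in $F_k/F_{k-1}$, which would be killed by the degree-$k$ graded map, contradicting its injectivity. Surjectivity and strictness: given $u\in U_{(k)}(\sbo_l)$, surjectivity of the degree-$k$ graded map produces $y\in F_k$ with $u-\mu(y)\in U_{(k-1)}(\sbo_l)$, and the inductive hypothesis applied to $U_{(k-1)}(\sbo_l)$ writes $u-\mu(y)=\mu(z)$ with $z\in F_{k-1}$, so $u=\mu(y+z)\in\mu(F_k)$. Thus $\mu$ is an isomorphism of right $W(\sbo_l)$-modules; restricting it to the $k$-th filtration pieces is exactly \eqref{U(b)-decom-A-W-homegeneous}; and freeness of $U(\sbo_l)$ over $W(\sbo_l)$ is then immediate, since $\widetilde A\otimes W(\sbo_l)$ is free over $W(\sbo_l)$ on any basis of $\widetilde A$. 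The one genuinely delicate point is the third paragraph: checking that the $F_k$-filtration of $\widetilde A\otimes W(\sbo_l)$ and the $\ssx$-filtration of $U(\sbo_l)$ are aligned well enough that the associated graded of $\mu$ is literally the model multiplication map and not merely a filtered refinement of it — and that is where \eqref{aagraded}, \eqref{tau-grading} and the commuting diagram \eqref{two-line-exact} are all used.
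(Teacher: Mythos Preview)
Your argument is correct and is essentially the same as the paper's: both lift the graded decomposition $S(\sbo_l)=A\otimes S(\sbo_l)^{N_l}$ of Theorem~\ref{atimessbn} through $\tau_\ssx$ using \eqref{aagraded}, \eqref{tau-grading} and the bottom exact row of \eqref{two-line-exact}, and then induct on $k$. The only difference is packaging---you phrase the induction as the standard ``filtered map with isomorphic associated graded'' principle, whereas the paper carries out the same induction by hand, explicitly decomposing $\tau_{(k)}(u)$ and lifting each piece.
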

\begin{proof}Clearly one has $\mc1=U_{(0)}(\sbo_l)=\widetilde{A}_{(0)}W_{(0)}(\sbo_l)$. Then inductively we can assume that \eqref{U(b)-decom-A-W-homegeneous} holds with replacing $k$ by any $j<k$. Let $u\in U_{(k)}(\sbo_l), v\in S_{(k)}(\sbo_l)$ be such that $\tau_{(k)}(u)=v$. By \eqref{S(b)=A-times-SG} we can uniquely find $v_i\in \widetilde{A}$ and $z_i\in S_{(k-i)}(\sbo_l)^{N_l}$ such that $v=\sum_iv_iz_i$. By \eqref{two-line-exact} we can find $w_i\in W_{(k-i)}(\sbo_l)$ such that $\tau_{(k-i)}(w_i)=z_i$. Let $u'$ be the image $\sum_iu_iw_i$ of
$\sum_iv_iz_i$ then one has $u-u'\in U_{(k-1)}(\sbo_l)$ and it appears in the image of \eqref{U(b)-decom-A-W-homegeneous} by inductive assumption.
Thus $u$ and hence $U(\sbo_l)$ is in the image of \eqref{U(b)-decom-A-W-homegeneous}. The map \eqref{U(b)-decom-A-W-homegeneous} is surjective.

The proof for injectivity is similar. For any nontrivial relation $\sum_iu_iw_i=0$, by considering its highest degree terms we have a nontrivial relation $\sum_iv_iz_i=0$ where $z_i\in S(\sbo_l)^{N_l}$.\end{proof}

\subsection{}

Let $\mc_\eta$ be the one-dimensional $\sn_l$-module defined by the character $\eta$ and let $W_\eta$ denote the induced module
\beqs W_\eta=U(\sg_l)\otimes_{U(\sn_l)}\mc_\eta,\eeqs
which is referred to the universal Whittaker module over $U(\sg_l)$ corresponding to $\eta$. Clearly it is the left quotient $U(\sg_l)/U(\sg_l)U_\eta(\sn_l)$ and hence
one has an epimorphism
\beqs U(\sg_l)\rightarrow W_\eta.\eeqs
Let $1_\eta$ denote the image of $1$. For any $u\in U(\sg_l), y\in W_\eta$ the action of $u$ on $y$ is denoted by $uy$. Clearly, one has
\beq\label{u-eta} u1_\eta=u^\eta1_\eta.\eeq
In particular, $W_\eta$ is a free $U(\sbo_l)$-module of rank one and this induces a $U(\sg_l)$-module structure on $U(\sbo_l)$ which is defined by
\beqs u\circ w^\eta=(uw)^\eta.\eeqs
Now one obtains an isomorphism
\beqs U(\sbo_l)\rightarrow W_\eta\eeqs
of $U(\sg_l)$-modules.

Next we consider the $\eta$-reduced action of $\sn_l$ on $U(\sbo_l)$:
\beqs x\cdot v=x\circ v-\eta(x)v.\eeqs

\begin{lemma}\label{adjoint-u-eta}For any $u\in U(\sg_l), x\in\sn_l$ one has
\beqs x\cdot u^\eta=[x,u]^\eta.\eeqs
\end{lemma}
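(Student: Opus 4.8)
The plan is to reduce the claimed identity to the defining property \eqref{u-eta} of the module $W_\eta$ together with the fact that $x \in \sn_l$ acts on $U(\sg_l)$ by the inner derivation $\ad x$. Recall that the $\eta$-reduced action is $x\cdot v = x\circ v - \eta(x)v$ and that the circle action is characterized by $u\circ w^\eta = (uw)^\eta$; equivalently, under the isomorphism $U(\sbo_l)\cong W_\eta$ sending $w^\eta$ to $w^\eta 1_\eta$, we have $u\circ w^\eta = (uw)^\eta$ because $u(w1_\eta) = (uw)1_\eta = (uw)^\eta 1_\eta$. So the first step is simply to unwind: $x\cdot u^\eta = (xu)^\eta - \eta(x)u^\eta$.

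The key step is to observe that $\eta(x)u^\eta = (ux)^\eta$ for $x\in\sn_l$. Indeed, write $u^\eta$ for the component of $u$ in $U(\sbo_l)$ relative to the decomposition $U(\sg_l) = U(\sbo_l)\oplus U(\sg_l)U_\eta(\sn_l)$ of \eqref{decomposition-U(g)}. Since $x\in\sn_l$, we may write $x = \eta(x)1 + (x-\eta(x)1)$ with $x-\eta(x)1 \in U_\eta(\sn_l)$. Then $ux = \eta(x)u + u(x-\eta(x)1)$, and the second summand lies in $U(\sg_l)U_\eta(\sn_l)$; hence applying $\rho_\eta$ and using that $\rho_\eta$ is linear and kills $U(\sg_l)U_\eta(\sn_l)$, we get $(ux)^\eta = \eta(x)u^\eta$. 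Therefore
\beqs
x\cdot u^\eta = (xu)^\eta - \eta(x)u^\eta = (xu)^\eta - (ux)^\eta = (xu - ux)^\eta = [x,u]^\eta,
\eeqs
which is the desired formula.

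I do not expect any serious obstacle here: the statement is essentially a bookkeeping identity, and the only thing to be careful about is the asymmetry between left and right multiplication by $x$. The point worth emphasizing is that $\rho_\eta$ is a \emph{left} projection (its kernel is the left ideal $U(\sg_l)U_\eta(\sn_l)$), so $(ux)^\eta = \eta(x)u^\eta$ works because $x$ sits on the right of $u$ and $U_\eta(\sn_l)$ absorbs it on the right; by contrast $(xu)^\eta$ is genuinely $x\circ u^\eta$ and need not simplify. Once this asymmetry is kept straight, the three-line computation above completes the proof, and it is worth recording as a remark that this lemma makes the $\eta$-reduced action of $\sn_l$ on $U(\sbo_l)$ agree with the adjoint action transported through $\rho_\eta$, which is exactly what is needed to identify $W(\sbo_l)$ (and more generally $S(\sbo_l)^{N_l}$-type invariants) with the $\sn_l$-invariants in the next subsection.
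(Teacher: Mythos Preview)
Your proof is correct and follows essentially the same idea as the paper's: both arguments reduce to the identity $(ux)^\eta=\eta(x)\,u^\eta$ (equivalently $ux\,1_\eta=\eta(x)\,u\,1_\eta$) and then subtract this from $x\circ u^\eta=(xu)^\eta$. The only cosmetic difference is that the paper phrases the computation inside the module $W_\eta$ by acting on $1_\eta$ and then invokes freeness of $W_\eta$ over $U(\sbo_l)$, whereas you work directly with the projection $\rho_\eta$ and the decomposition \eqref{decomposition-U(g)}; neither route requires anything the other does not.
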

\begin{proof}First one has
\beqs xu1_\eta=[x,u]1_\eta+ux1_\eta=[x,u]1_\eta+\eta(x)u1_\eta\eeqs
and
\beqs [x,u]1_\eta=xu1_\eta-\eta(x)u1_\eta.\eeqs
Therefore  it holds
\beqs (x\cdot u^\eta)1_\eta=(x\circ u^\eta-\eta(x)u^\eta)1_\eta=(xu-\eta(x)u)^\eta1_\eta=[x,u]^\eta1_\eta.\eeqs
Then  one has $x\cdot u^\eta=[x,u]^\eta$ by the uniqueness of $u^\eta$ in \eqref{u-eta}.
\end{proof}

\begin{theo}Let $p,q\in\mathbb{N}, p>0, x\in{\sg_{[p]}}_l\subseteq\sn_l$ and $u\in U_{(q)}(\sbo_l)$. Then $x\cdot u\in U_{(q-p)}(\sbo_l)$ so that the $\eta$-reduced action of $\sn_l$ induces a graded action of $\sn_l$ on $\gr U(\sbo_l)$. Moreover one has
\beq\label{tau-k-p-q} \tau_{q-p}(x\cdot u)=x\cdot\tau_{(q)}u\eeq
and hence the map
\beq\label{iso-tau-k} \tau_\ssx:\gr U(\sbo_l)\rightarrow S(\sbo_l),\eeq
is an isomorphism of graded $\sn_l$-modules.
\end{theo}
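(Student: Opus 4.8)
The plan is to reduce everything to a single compatibility between the reduction map $\rho_\eta$ and the symbol maps, extending the square \eqref{diagramsbquare} from $U(\sg_l)^{\sh(0)}$ to all of $U(\sg_l)$. Precisely, I would first record: for every standard-filtration degree $k$ and every $X$ in the $\ad(-\ssx)$-eigenspace $(U_k(\sg_l))_t$ one has $\rho_\eta(X)\in U_{(k+t)}(\sbo_l)$ and
\beqs \tau_{(k+t)}(\rho_\eta(X))=\rho_\ssf(\tau_k(X)),\eeqs
where $\rho_\ssf$ carries $(S_k(\sg_l))_t$ into $S_{(k+t)}(\sbo_l)$. This is exactly the computation made inside the proof of Theorem \ref{Z-W-grading-theorem}, now run on every PBW basis monomial of $U_k(\sg_l)$ rather than only the $\sh(0)$-invariant ones: such a monomial is killed by $\rho_\eta$ (resp.\ by $\rho_\ssf$ on the corresponding monomial of $S(\sg_l)$) unless its $\sn_l$-factors are all simple root vectors, in which case $\rho_\eta$ (resp.\ $\rho_\ssf$) multiplies it by the product of the $\eta(e_{\alpha_i}(j))$ (resp.\ of the $Q(e_{-\alpha_i}(l-j),e_{\alpha_i}(j))$) and deletes those factors; since $\ssf$ corresponds to $\eta$ the two scalars coincide, and since each factor $e_{\alpha_i}(j)$ has standard degree $1$ and $\ad(-\ssx)$-eigenvalue $-1$, hence contributes $0$ to the $\ssx$-degree, deleting them leaves the $\ssx$-degree unchanged; so the surviving monomial has $\ssx$-degree $(\text{standard degree of }X)+t\le k+t$, with equality exactly on the part of $X$ of standard degree $k$. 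The constraint $\langle p\rangle=\langle q\rangle$ used in \eqref{diagramsbquare} plays no role term by term; it appeared there only because $Z(\sg_l)\subseteq U(\sg_l)^{\sh(0)}$.

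Granting this, the first assertion follows at once. Let $x\in{\sg_{[p]}}_l\subseteq\sn_l$ and $u\in U_{(q)}(\sbo_l)$; since \eqref{directsum-U} is a direct sum we may assume $u\in(U_k(\sbo_l))_s$ with $k+s=q$. As $\ad x$ preserves the standard filtration and $[\ssx,x]=px$, we get $[x,u]\in(U_k(\sg_l))_{s-p}$, so by Lemma \ref{adjoint-u-eta} (where $u^\eta=u$ because $u\in U(\sbo_l)$) together with the compatibility above,
\beqs x\cdot u=\rho_\eta([x,u])\in U_{(k+s-p)}(\sbo_l)=U_{(q-p)}(\sbo_l).\eeqs
Thus the $\eta$-reduced action sends $U_{(q)}(\sbo_l)$ into $U_{(q-p)}(\sbo_l)$ whenever $x\in{\sg_{[p]}}_l$; in particular it descends to $\gr U(\sbo_l)$ and is graded there.

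For \eqref{tau-k-p-q} put $\bar u=\tau_k(u)=\tau_{(q)}(u)$. Because $\tau_k$ intertwines the adjoint action, $\tau_k([x,u])=(\ad x)(\bar u)\in(S_k(\sg_l))_{s-p}$, hence
\beqs \tau_{(q-p)}(x\cdot u)=\tau_{(q-p)}(\rho_\eta([x,u]))=\rho_\ssf\big((\ad x)(\bar u)\big).\eeqs
Now $\rho_\ssf$ is equivariant for the adjoint action of $N_l$ on $S(\sg_l)$ and the translated action of $N_l$ on $S(\sbo_l)$ — this is precisely the identity $a\cdot\rho_\ssf(v)=\rho_\ssf(av)$ obtained in the first three steps of the proof of Lemma \ref{invar-iso-1}, which do not use $G_l$-invariance — so, differentiating, $\rho_\ssf((\ad x)(v))=x\cdot\rho_\ssf(v)$ for all $x\in\sn_l$ and $v\in S(\sg_l)$. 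Moreover $\rho_\ssf$ is an algebra homomorphism fixing each $\xi\in\sbo_l$, since $\xi^\ssf(y)=Q(\xi,\ssf+y)=Q(\xi,y)$ because $Q(\sbo_l,\ssf)=0$ ($\ssf\in\sno_l$ and $Q(\sbo_l,\sno_l)=0$); hence $\rho_\ssf$ restricts to the identity on the subalgebra $S(\sbo_l)\subseteq S(\sg_l)$. Therefore $\rho_\ssf((\ad x)(\bar u))=x\cdot\rho_\ssf(\bar u)=x\cdot\bar u=x\cdot\tau_{(q)}(u)$, which is \eqref{tau-k-p-q}. Passing to associated graded objects, \eqref{tau-k-p-q} says exactly that the graded algebra isomorphism $\tau_\ssx$ established above intertwines the graded $\sn_l$-action just obtained on $\gr U(\sbo_l)$ with the graded $\sn_l$-action on $S(\sbo_l)$ of Proposition \ref{z-dot-v}; so $\tau_\ssx$ is an isomorphism of graded $\sn_l$-modules.

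The main obstacle is the first step: since $\rho_\eta$ is not an algebra homomorphism one cannot assemble the argument by composing known maps, and one must follow, uniformly, how $\rho_\eta$ transports the standard filtration of $U(\sg_l)$ into the $\ssx$-filtration of $U(\sbo_l)$. The bookkeeping is a straightforward extension of the proof of Theorem \ref{Z-W-grading-theorem}, but getting the degree count right — and seeing that ``$\ssf$ corresponds to $\eta$'' is exactly what forces the two scalar factors to agree — is where the real work sits; the two auxiliary facts about $\rho_\ssf|_{S(\sbo_l)}$ used at the end are immediate from $\ssf\in\sno_l$.
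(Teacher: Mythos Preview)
Your argument is correct and takes a genuinely different route from the paper's. The paper reduces to generators $x=e_{\alpha_i}(j)$, writes the explicit decomposition $[e_{\alpha_i}(j),u]=u'+\sum_{j'\geq j}u_{j'}e_{\alpha_i}(j')$ inside $U(\sg_l)$ (formula \eqref{ade-u}), reads off the formula \eqref{ei-dot-u} for $e_{\alpha_i}(j)\cdot u$, tracks the bidegrees of $u'$ and $u_{j'}$ directly, and then matches this term-by-term against the parallel identity on $S(\sbo_l)$ from Proposition \ref{adei-v-prop}. Your approach instead globalizes the square \eqref{diagramsbquare} from $U_k(\sg_l)^{\sh(0)}$ to all of $(U_k(\sg_l))_t$, then feeds everything through the single identity $x\cdot u=\rho_\eta([x,u])$ of Lemma \ref{adjoint-u-eta} together with the $N_l$-equivariance of $\rho_\ssf$ and the observation $\rho_\ssf|_{S(\sbo_l)}=\id$ (both immediate from $\ssf\in\sno_l$). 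What you gain is conceptual cleanliness: no reduction to simple root vectors, no case-by-case degree bookkeeping on $u',u_{j'}$, and the role of ``$\ssf$ corresponds to $\eta$'' is isolated in the extended square rather than buried in a computation. What the paper's approach buys is that it simultaneously produces the explicit formulas \eqref{ade-u}--\eqref{ei-dot-u}, which are reused later (e.g.\ in Lemma \ref{ei-star-U(b)-2}); your route bypasses these, so if one needs them elsewhere they must be derived separately.
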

\begin{proof}
It is sufficient to assume $x=e_{\alpha_i}(j)$ for some $1\leq i\leq n, 0\leq j\leq l$ since  $\sn_l$ is generated by the elements $e_{\alpha_i}(j)$.

Because $[e_{\alpha_i}(j),U(\sno_l)]\subseteq U(\sbo_l)$ and $[e_{\alpha_i}(j),U(\sh_l)]\subseteq \sum_{j'\geq j}U(\sh_l)e_{\alpha_i}(j')$, then there  are unique elements $u', u_{j'}\in U(\sbo_l)$ with $j'\geq j$ for any $u$ such that
\beq\label{ade-u}[e_{\alpha_i}(j),u]=u'+\sum_{j'\geq j}u_{j'}e_{\alpha_i}(j).\eeq
Since $\ad e_{\alpha_i}(j)$ is a derivation of $U(\sg_l)$ this establishes \eqref{ade-u}. The uniqueness follows from the isomorphism $U(\sg_l)\cong U(\sbo_l)\otimes U(\sn_l)$.

Now taking $u\in (U_k(\sbo_l))_r$ with $k+r=q$. Then $u\in U_k(\sg_l), [\ssx, e_{\alpha_i}(j)]=e_{\alpha_i}(j)$ and $[\ssx,u]=-ru$, this follows
$[\ssx,[ e_{\alpha_i}(j),u]]=(1-r)[e_{\alpha_i}(j),u]$. By the uniqueness of \eqref{ade-u} and
$$U_k(\sg_l)=\sum_{t+s=k}U_t(\sbo_l)\otimes U_s(\sn_l),$$
it follows that $u'\in (U_k(\sbo_l))_{r-1}$ and $u_{j'}\in (U_{k-1}(\sbo_l))_{r}$. So $u', u_{j'}\in U_{(q-1)}(\sbo_l)$.  Since ${u'}^\eta=u'$ and
\beqs (u_{j'}e_{\alpha_i}(j'))^\eta=\eta(e_{\alpha_i}(j'))u_{j'}=Q(e_{\alpha_i}(j'),e_{-\alpha_i}(l-j'))u_{j'},\eeqs
one has
\beq\label{ei-dot-u} e_{\alpha_i}(j)\cdot u=u'+\sum_{j'\geq j}Q(e_{\alpha_i}(j'),e_{-\alpha_i}(l-j'))u_{j'}\eeq
and hence $e_{\alpha_i}(j)\cdot u\in U_{(q-1)}(\sbo_l)$. The first statement holds.

In order to prove \eqref{tau-k-p-q} again it suffices to assume $x=e_{\alpha_i}(j)$ with the same reason. Assume $u\in (U_k(\sbo_l))_r$ with $q=k+r$. By the definition of $\tau_{(q-1)}$ and \eqref{ei-dot-u} one has
\beqs\tau_{(q-1)}(e_{\alpha_i}(j)\cdot u)=\tau_k(u')+\sum_{j'\geq j}Q(e_{\alpha_i}(j'),e_{-\alpha_i}(l-j'))\tau_{k-1}(u_{j'}).\eeqs
Since $\tau_k$ commutes with the adjoint action, by \eqref{ade-u} and Proposition \ref{adei-v-prop} then one has
\beqs e_{\alpha_i}(j)\cdot(\tau_{q}(u))&=&[e_{\alpha_i}(j),\tau_{q}(u)]^\eta=\big(\tau_{q-1}([e_{\alpha_i}(j),\tau_{q}(u)]\big)^\eta\\
&=&\left(\tau_{(q-1)}(u'+\sum_{j'\geq j}u_{j'}e_{\alpha_i}(j))\right)^\eta\\
&=&\left(\tau_{(q-1)}(u')+\sum_{j'\geq j}\tau_{(q)}(u_{j'})e_{\alpha_i}(j')\right)^\eta\\
&=&\tau_k(u')+\sum_{j'\geq j}Q(e_{\alpha_i}(j'),e_{-\alpha_i}(l-j'))\tau_{k-1}(u_{j'})\\
&=&\tau_{(q-1)}(e_{\alpha_i}(j)\cdot u).\eeqs
This finishes the proof.\end{proof}

\begin{lemma}\label{n-dot-u-Z-lemma}For any $u\in U(\sbo_l), v\in W(\sbo_l)$ and $x\in\sn_l$ one has
\beq\label{n-dot-u-Z} x\cdot(uv)=(x\cdot u)v\eeq
and hence
\beq\label{n-dot-Z=0} x\cdot v=0.\eeq \end{lemma}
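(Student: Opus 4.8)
The plan is to reduce everything to two ingredients already available: Lemma~\ref{adjoint-u-eta}, which identifies the $\eta$-reduced action via $x\cdot u^\eta=[x,u]^\eta$ for all $u\in U(\sg_l)$ and $x\in\sn_l$, and the description $W(\sbo_l)=\rho_\eta(Z(\sg_l))$, so that every $v\in W(\sbo_l)$ can be written $v=z^\eta$ with $z\in Z(\sg_l)$ \emph{central}. Throughout I would use freely that $U(\sbo_l)$ is exactly the image of $\rho_\eta$, hence $w^\eta=w$ for $w\in U(\sbo_l)$ and in particular $(uv)^\eta=uv$, and that $\ker\rho_\eta=U(\sg_l)U_\eta(\sn_l)$ is a \emph{left} ideal of $U(\sg_l)$ (while $U_\eta(\sn_l)$ itself is a two-sided ideal of $U(\sn_l)$, being the kernel of a character).

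The first step is an auxiliary claim: for $v\in W(\sbo_l)$ and $x\in\sn_l$ one has $[x,v]\in U(\sg_l)U_\eta(\sn_l)$. Writing $v=z^\eta=z-s$ with $z$ central and $s\in U(\sg_l)U_\eta(\sn_l)$, centrality gives $[x,v]=-[x,s]$; expanding $s=\sum_i a_ib_i$ with $b_i\in U_\eta(\sn_l)$ and applying the Leibniz rule for the derivation $\ad x$, the term $[x,a_i]b_i$ visibly lies in $U(\sg_l)U_\eta(\sn_l)$, while $[x,b_i]=xb_i-b_ix$ lies in $U_\eta(\sn_l)$ because $\eta$ is an algebra character of $U(\sn_l)$, so $\eta([x,b_i])=\eta(x)\eta(b_i)-\eta(b_i)\eta(x)=0$. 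Hence $[x,s]$, and therefore $[x,v]$, lands in the left ideal.

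Next I would prove \eqref{n-dot-u-Z}. Since $uv\in U(\sbo_l)$, Lemma~\ref{adjoint-u-eta} gives $x\cdot(uv)=[x,uv]^\eta=([x,u]v)^\eta+(u[x,v])^\eta$. The second summand vanishes: by the auxiliary claim $[x,v]\in U(\sg_l)U_\eta(\sn_l)$, and left multiplication by $u\in U(\sg_l)$ keeps it there. For the first summand, write $[x,u]=[x,u]^\eta+r$ with $r\in U(\sg_l)U_\eta(\sn_l)$, so $[x,u]v=[x,u]^\eta v+rv$ with $[x,u]^\eta v\in U(\sbo_l)$; and $rv=rz-rs$, where $rs\in U(\sg_l)U_\eta(\sn_l)$ trivially and $rz=zr\in U(\sg_l)U_\eta(\sn_l)$ by centrality of $z$ together with the left-ideal property, so $rv\in U(\sg_l)U_\eta(\sn_l)$. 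Applying $\rho_\eta$ (which is linear) then yields $([x,u]v)^\eta=[x,u]^\eta v=(x\cdot u)v$, using Lemma~\ref{adjoint-u-eta} once more; this is \eqref{n-dot-u-Z}. Finally, specializing $u=1$ and noting $x\cdot1=[x,1]^\eta=0$ gives \eqref{n-dot-Z=0}.

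I expect the genuine obstacle, and the only point where $v\in W(\sbo_l)$ is used rather than an arbitrary element of $U(\sbo_l)$, to be the verification that the error term $rv$ remains in $U(\sg_l)U_\eta(\sn_l)$: since this is only a \emph{left} ideal, one cannot pull $v$ through from the right, and the decisive observation is that $v$ differs from a central element $z$ by something already in the ideal, so commuting $z$ to the left is legitimate. Everything else is routine bookkeeping with the Leibniz rule and the decomposition $U(\sg_l)=U(\sbo_l)\oplus U(\sg_l)U_\eta(\sn_l)$.
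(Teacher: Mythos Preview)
Your proof is correct and uses essentially the same approach as the paper: write $v=w^\eta$ with $w\in Z(\sg_l)$ central, apply Lemma~\ref{adjoint-u-eta}, and exploit the centrality of $w$ to commute factors past the left ideal $U(\sg_l)U_\eta(\sn_l)$. The paper's version is marginally slicker in that it lifts from $uv$ to $uw$ at the outset (observing $(uw)^\eta=u^\eta w^\eta=uv$), so that $[x,uw]=[x,u]w$ holds directly and no Leibniz expansion of $[x,uv]$ is needed; but this is purely a cosmetic difference.
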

\begin{proof}
By Theorem \ref{whittaker-model-theorem} there exists a unique $w\in Z(\sg_l)$ such that $v=w^\eta$. Since $u\in U(\sbo_l)$ one has $u=u^\eta$ and  $uw-u^\eta w^\eta=u(w-w^\eta)\in U(\sg_l)U_\eta(\sn_l)$. Then  $uv=(uw)^\eta$ and then $x\cdot(uv)=[x,uv]^\eta$ by Lemma \ref{adjoint-u-eta}. However $[x,uw]=[x,u]w$ since $w$ is central. Similar to $uv=(uw)^\eta$ one has $([x,u]w)^\eta=[x,u]^\eta w^\eta$. Thus $x\cdot(uv)=[x,u]^\eta v$. However $[x,u]^\eta=x\cdot u$ by Lemma \ref{adjoint-u-eta}, this proves \eqref{n-dot-u-Z}.

In particular by takeing $u=1$ then one has $x\cdot v=[x,w]^\eta=0$, this proves \eqref{n-dot-Z=0}.
\end{proof}

\subsection{}

For any $j\in\mz_+, k\in\mathbb{N},v\in\widetilde{A_{(k)}}$ and $x\in\sn_l\cap{\sg_{[j]}}_l$ one has $x\cdot v\in U_{(k-j)}(\sbo_l)$ and it can be uniquely written
\beq\label{x-dot-v-decomposition} x\cdot v=\sum_{p\geq0} u_p\eeq
for some elements $u_p\in\widetilde{A_{(p)}}\otimes W_{(q)}(\sbo_l)$ where $p+q=k-j$. Now we let $x\star v= u_{k-j}$ so that
\beq\label{star-definition} x\star v\in\widetilde{A_{(k-j)}}.\eeq

\begin{lemma}\label{dot-tau-commute-lemma}Let $j\in\mz_+, k\in\mathbb{N}, v\in\widetilde{A_{(k)}}$ and $x\in\sn_l\cap{\sg_{[j]}}_l$ then one has
\beq\label{w=x-dot-v-x-star-v} x\cdot v-x\star v\in U_{(k-j-1)}(\sbo_l)\eeq
and
\beq\label{dot-tau-commute} x\cdot(\tau_{(k)}v)=\tau_{(k-j)}(x\star v).\eeq
\end{lemma}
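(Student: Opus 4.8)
The plan is to prove \eqref{w=x-dot-v-x-star-v} first and then read off \eqref{dot-tau-commute} almost for free. I would begin by disposing of the degenerate case $k<j$, where $x\cdot v\in U_{(k-j)}(\sbo_l)=0$ and both assertions are vacuous, so from now on $k\ge j$.

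For \eqref{w=x-dot-v-x-star-v} I would use Theorem \ref{U(b)-decom-A-W-homegeneous-theorem} to expand $x\cdot v\in U_{(k-j)}(\sbo_l)$ uniquely as $\sum_{p\ge 0}u_p$, where $u_p$ is a sum of products $aw$ with $a\in\widetilde{A_{(p)}}$ and $w\in W_{(k-j-p)}(\sbo_l)$; in particular $u_p=0$ once $p>k-j$ since then $W_{(k-j-p)}(\sbo_l)=0$, and $x\star v=u_{k-j}\in\widetilde{A_{(k-j)}}$ by the definition \eqref{star-definition}. Because the exact sequence \eqref{exact-sequence-tau(i)} identifies $\ker\tau_{(k-j)}$ with $U_{(k-j-1)}(\sbo_l)$, proving \eqref{w=x-dot-v-x-star-v} amounts to showing that $\tau_{(k-j)}(u_p)=0$ for every $p<k-j$.

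To see this I would compute $\tau_{(k-j)}(x\cdot v)$ in two ways. On one hand, by the multiplicativity \eqref{tau-grading} of the $\tau_{(i)}$, together with the facts that $\tau_{(p)}$ maps $\widetilde{A_{(p)}}$ onto $A_{(p)}$ (definition of $\widetilde{A_{(p)}}$) and $\tau_{(k-j-p)}$ maps $W_{(k-j-p)}(\sbo_l)$ onto $S_{(k-j-p)}(\sbo_l)^{N_l}$ (Theorem \ref{Z-W-grading-theorem}), each $\tau_{(k-j)}(u_p)$ lies in $A_{(p)}\cdot S_{(k-j-p)}(\sbo_l)^{N_l}$. On the other hand, \eqref{tau-k-p-q} (applied to $u=v\in U_{(k)}(\sbo_l)$) gives $\tau_{(k-j)}(x\cdot v)=x\cdot\tau_{(k)}(v)$, and since $\tau_{(k)}(v)\in A_{(k)}$ and the $\sn_l$-action on the graded algebra $A$ carries $A_{(k)}$ into $A_{(k-j)}$ for $x\in{\sg_{[j]}}_l$ (the last Proposition of Section 3), this element lies in $A_{(k-j)}=A_{(k-j)}\cdot S_{(0)}(\sbo_l)^{N_l}$. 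I would then appeal to the graded isomorphism $A\otimes S(\sbo_l)^{N_l}\rightarrow S(\sbo_l)$ of Theorem \ref{atimessbn}, which in $\ssx$-degree $k-j$ is the direct-sum decomposition $S_{(k-j)}(\sbo_l)=\bigoplus_{p+q=k-j}A_{(p)}\cdot S_{(q)}(\sbo_l)^{N_l}$: equating $\sum_p\tau_{(k-j)}(u_p)$ with an element of the single summand $A_{(k-j)}\cdot S_{(0)}(\sbo_l)^{N_l}$ forces $\tau_{(k-j)}(u_p)=0$ for all $p<k-j$, which proves \eqref{w=x-dot-v-x-star-v}. The same computation gives $\tau_{(k-j)}(x\star v)=\tau_{(k-j)}(u_{k-j})=\tau_{(k-j)}(x\cdot v)=x\cdot\tau_{(k)}(v)$, which is \eqref{dot-tau-commute}.

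The one point that needs care is that $W(\sbo_l)$ is only $\ssx$-filtered, not graded, so the leading $\widetilde{A}$-component of $x\cdot v$ cannot be isolated directly inside $\widetilde{A}\otimes W(\sbo_l)$; the device is to descend along $\tau_{(k-j)}$ to $S(\sbo_l)$, where the target genuinely splits as the bigraded algebra $A\otimes S(\sbo_l)^{N_l}$, and to let the uniqueness of that splitting perform the separation of degrees. Everything else is routine bookkeeping with the index relation $p+q=k-j$.
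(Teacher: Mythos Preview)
Your proposal is correct and follows essentially the same route as the paper: both arguments push $x\cdot v$ along $\tau_{(k-j)}$ into $S_{(k-j)}(\sbo_l)$, compare the two descriptions $\sum_p\tau_{(k-j)}(u_p)$ and $x\cdot\tau_{(k)}(v)\in A_{(k-j)}$, and invoke the direct-sum decomposition coming from $A\otimes S(\sbo_l)^{N_l}\cong S(\sbo_l)$ to kill the lower components. The only cosmetic difference is the order of presentation---the paper establishes \eqref{dot-tau-commute} first and then deduces \eqref{w=x-dot-v-x-star-v} from $\tau_{(k-j)}(x\cdot v-x\star v)=0$, whereas you aim at \eqref{w=x-dot-v-x-star-v} first---but the underlying computation is identical.
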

\begin{proof}
By \eqref{tau-k-p-q} one has $x\cdot(\tau_{(k)}v)=\tau_{(k-j)}(x\cdot v)$. Then by \eqref{x-dot-v-decomposition} one has
$x\cdot(\tau_{(k)}v)=\sum_{p\geq0}v_p$ where $v_p=\tau_{(k-j)}u_p$. Then
\beqs v_p\in\sum_{p+q=k-j}A_{(p)}\otimes S_{(q)}(\sbo_l)^{N_l}\eeqs
by \eqref{tau-grading}. However $x\cdot(\tau_{(k)}v)\in A_{(k-j)}$ by Proposition \ref{A-is-N-submodule-of-S(b)} and \eqref{adei-v}. Then $v_p=0$ for all $p\not=k-j$ and $x\cdot(\tau_{(k)}v)=v_{k-j}=\tau_{(k-j)}u_{(k-j)}=\tau_{(k-j)}(x\star v)$. This proves \eqref{dot-tau-commute}.

Let $w=x\cdot v-x\star v$. Then one has
\beqs \tau_{(k-j)}(w)=\tau_{(k-j)}(x\cdot v)-\tau_{(k-j)}(x\star v)=x\cdot(\tau_{(k-j)}v)-\tau_{(k-j)}(x\star v)=0.\eeqs
This proves \eqref{w=x-dot-v-x-star-v}.\end{proof}

\begin{prop}\label{A-tilde-A-n-mod-iso}
$\widetilde{A}$ is an $\sn_l$-module with respect to the $\eta$-reduced action $x\star v$ for any $x\in\sn_l$ and $v\in\widetilde{A}$.  Moreover any linear isomorphism
\beqs \widetilde{A}\rightarrow A \eeqs
of graded vector spaces defined by \eqref{aagraded} is an isomorphism of $\sn_l$-modules.
\end{prop}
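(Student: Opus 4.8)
The plan is to transport the $\sn_l$-module structure on $A$ established in Proposition \ref{A-is-N-submodule-of-S(b)} across the linear isomorphism $\widetilde{A}\rightarrow A$ given by $\tau_\ssx$ in \eqref{aagraded}, using the "top-degree" operation $x\star v$ defined in \eqref{star-definition} as the candidate action. First I would observe that $x\star v$ is bilinear in $(x,v)$ and respects the grading: for $x\in\sn_l\cap{\sg_{[j]}}_l$ and $v\in\widetilde{A_{(k)}}$ we have $x\star v\in\widetilde{A_{(k-j)}}$ by construction. The essential identity to verify is that $\star$ satisfies the bracket relation, i.e. $[x,y]\star v = x\star(y\star v)-y\star(x\star v)$ for $x,y\in\sn_l$ and $v\in\widetilde{A}$; once this is in hand, $\widetilde{A}$ is an $\sn_l$-module and the second claim follows almost immediately.

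The key tool is Lemma \ref{dot-tau-commute-lemma}, which gives $\tau_{(k-j)}(x\star v)=x\cdot(\tau_{(k)}v)$. Thus under the isomorphism $\tau_\ssx:\widetilde{A}\rightarrow A$ (the restriction of \eqref{iso-tau-k}, which is already known to be an isomorphism of graded $\sn_l$-modules onto $S(\sbo_l)$, hence carries $\widetilde{A}$ onto $A$ compatibly with the $\eta$-reduced action), the operation $x\star v$ corresponds exactly to the action $x\cdot(\cdot)$ of $\sn_l$ on $A\subseteq S(\sbo_l)$. Since that action is a genuine Lie-algebra action (Proposition \ref{A-is-N-submodule-of-S(b)} together with Remark \ref{N-finiteness}), the bracket relation for $\star$ follows by applying $\tau_\ssx$, which is injective on each graded piece: for $v\in\widetilde{A_{(k)}}$, $x\in{\sg_{[i]}}_l$, $y\in{\sg_{[j]}}_l$, both $[x,y]\star v$ and $x\star(y\star v)-y\star(x\star v)$ lie in $\widetilde{A_{(k-i-j)}}$, and $\tau_{(k-i-j)}$ sends both to $[x,y]\cdot\tau_{(k)}v = x\cdot(y\cdot\tau_{(k)}v)-y\cdot(x\cdot\tau_{(k)}v)$ by the module axiom for $A$; injectivity of $\tau_{(k-i-j)}$ on $\widetilde{A_{(k-i-j)}}$ then gives equality. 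This makes $\widetilde{A}$ an $\sn_l$-module, and the diagram
\beqs
\xymatrix@C=0.6cm{
\widetilde{A} \ar[r]^{x\star} \ar[d]_{\tau_\ssx} & \widetilde{A} \ar[d]^{\tau_\ssx}\\
A \ar[r]^{x\cdot} & A
}
\eeqs
commutes for every $x\in\sn_l$, so $\tau_\ssx:\widetilde{A}\rightarrow A$ is an isomorphism of $\sn_l$-modules; since any linear isomorphism of graded spaces defined by \eqref{aagraded} agrees with $\tau_\ssx$ on each graded component, the final assertion follows.

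The main obstacle is purely bookkeeping: one must be careful that $x\star v$ is well-defined, i.e. independent of the choice of complement $\widetilde{A_{(k)}}$ only in the sense that the statement is about a \emph{fixed} such choice, and that the decomposition \eqref{x-dot-v-decomposition} via Theorem \ref{U(b)-decom-A-W-homegeneous-theorem} genuinely isolates the $\widetilde{A_{(k-j)}}\otimes W_{(0)}(\sbo_l)$-component. The point that needs the most care is matching degrees: $x\cdot v$ a priori lands in $U_{(k-j)}(\sbo_l)=\bigoplus_{p+q=k-j}\widetilde{A_{(p)}}\otimes W_{(q)}(\sbo_l)$, and one must check — exactly as in the proof of \eqref{dot-tau-commute} — that after applying $\tau$ only the $q=0$ term survives, because $x\cdot(\tau_{(k)}v)\in A_{(k-j)}$ has no $S(\sbo_l)^{N_l}$-component of positive degree. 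Everything else reduces to the fact that $\tau_\ssx$ is an algebra (and $\sn_l$-module) isomorphism, already proved, and to the injectivity of $\tau_{(m)}$ on each $\widetilde{A_{(m)}}$, which is immediate from \eqref{aagraded}.
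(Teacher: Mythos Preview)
Your proposal is correct and follows essentially the same route as the paper: both arguments use the intertwining identity \eqref{dot-tau-commute}, namely $\tau_{(k-j)}(x\star v)=x\cdot(\tau_{(k)}v)$, together with the fact that $A$ is an $\sn_l$-submodule of $S(\sbo_l)$ (Proposition~\ref{A-is-N-submodule-of-S(b)} plus Remark~\ref{N-finiteness}), to transport the $\sn_l$-module structure across the graded linear isomorphism $\tau_\ssx|_{\widetilde A}$. The paper simply states that the result ``follows from \eqref{dot-tau-commute}'' after noting $A$ is $\sn_l$-stable, whereas you spell out the bracket verification via injectivity of $\tau_{(m)}$ on $\widetilde{A_{(m)}}$; this is only an elaboration, not a different idea.
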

\begin{proof}
It is easy to see that the action of $N_l$ on $S(\sbo_l)$ is locally finite. Then $v\in S(\sbo_l)^{N_l}$ if and only if $z\cdot v=0$ for all $z\in\sn_l$. Also $A$ is stable under the action of $N_l$ if and only if it is stable under the adjoint action of $\sn_l$. Thus by Proposition \ref{A-is-N-submodule-of-S(b)} the map \eqref{A-A-n-mod-iso} is also an isomorphism of $\sn_l$-modules.
Then this proposition follows from \eqref{dot-tau-commute}.
\end{proof}

\begin{lemma}\label{ei-star-U(b)}For any $p\in\mathbb{N}, v\in\widetilde{A_{(p)}}$ one has $e_{\alpha_i}(j)\star v\in\widetilde{A}_{(p-1)}$ for any $1\leq i\leq n$ and $0\leq j\leq l$. If $p\geq 1$ and $v\not=0$ there exists $i, j$ such that $e_{\alpha_i}(j)\star v\not=0$. In this case one also has $e_{\alpha_i}(j)\cdot v\not=0$.
\end{lemma}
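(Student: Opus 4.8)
The plan is to read this off from the dictionary between $\widetilde{A}$ and $A$ that has already been built; no genuinely new computation is needed. For the first assertion, observe that each simple root vector $e_{\alpha_i}$ lies in $\sg_{[1]}$, so $e_{\alpha_i}(j)\in\sn_l\cap{\sg_{[1]}}_l$ for every $0\le j\le l$ (the $\ssx$-grading sees only the root, not the power of $t$). Hence the recipe \eqref{star-definition}, applied with grading degree $1$, immediately gives $e_{\alpha_i}(j)\star v\in\widetilde{A}_{(p-1)}$ for every $v\in\widetilde{A_{(p)}}$. This step carries no obstacle.

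For the nonvanishing, I would transfer the question to $A$ along the graded isomorphism $\tau_{(p)}\colon\widetilde{A_{(p)}}\to A_{(p)}$ of \eqref{aagraded}. By Proposition \ref{A-tilde-A-n-mod-iso} together with \eqref{dot-tau-commute} one has $\tau_{(p-1)}(e_{\alpha_i}(j)\star v)=e_{\alpha_i}(j)\cdot(\tau_{(p)}v)$, and $\tau_{(p)}$ is injective on $\widetilde{A_{(p)}}$, so it suffices to show: if $w\in A_{(p)}$ with $p\ge1$ and $w\neq0$, then $e_{\alpha_i}(j)\cdot w\neq0$ for some $i,j$. I would argue by contradiction. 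Suppose $e_{\alpha_i}(j)\cdot w=0$ for all $i,j$. Since $\sn_l$ is generated as a Lie algebra by the elements $e_{\alpha_i}(j)$, and the $\cdot$-action of $\sn_l$ on $A$ is a Lie module action (the differential of the $N_l$-action), an induction on bracket length shows that $z\cdot w=0$ for every iterated bracket of the generators, hence for every $z\in\sn_l$. By Remark \ref{N-finiteness} this forces $w\in S(\sbo_l)^{N_l}$, so $w\in A\cap S(\sbo_l)^{N_l}=\mc1=A_{(0)}$ by the tensor decomposition \eqref{S(b)=A-times-SG}, contradicting $w\in A_{(p)}$ with $p\ge1$ and $w\neq0$. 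Therefore some $e_{\alpha_i}(j)\cdot(\tau_{(p)}v)\neq0$, and consequently the corresponding $e_{\alpha_i}(j)\star v\neq0$.

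To finish, fix such a pair $i,j$ and compare $\cdot$ with $\star$. By \eqref{w=x-dot-v-x-star-v} the difference $e_{\alpha_i}(j)\cdot v-e_{\alpha_i}(j)\star v$ lies in $U_{(p-2)}(\sbo_l)$, which is the kernel of $\tau_{(p-1)}$ by the exact sequence \eqref{exact-sequence-tau(i)}. Hence $\tau_{(p-1)}(e_{\alpha_i}(j)\cdot v)=\tau_{(p-1)}(e_{\alpha_i}(j)\star v)$, and the right-hand side is nonzero since $\tau_{(p-1)}$ restricts to an isomorphism on $\widetilde{A_{(p-1)}}$ and $e_{\alpha_i}(j)\star v\neq0$; therefore $e_{\alpha_i}(j)\cdot v\neq0$.

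Almost everything above is degree bookkeeping among the $\ssx$-filtration of $U(\sbo_l)$, the $\ssx$-grading of $S(\sbo_l)$, and the $t$-degree, and I expect no real difficulty there. The one point that genuinely needs an argument rather than a citation is the passage from the hypothesis that all the Lie generators $e_{\alpha_i}(j)$ annihilate $w$ to the conclusion that all of $\sn_l$ annihilates $w$; this is exactly the bracket-length induction indicated above, and it is the crux of the proof.
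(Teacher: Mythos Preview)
Your proposal is correct and follows essentially the same route as the paper's proof. The paper argues the first claim from \eqref{star-definition}, the second by citing Proposition~\ref{A-tilde-A-n-mod-iso} (which encodes the $\sn_l$-module isomorphism $\widetilde{A}\cong A$ and the fact, stated at the end of \S3, that $A\cap S(\sbo_l)^{N_l}=\mc1$), and the third via $\widetilde{A_{(p-1)}}\cap U_{(p-2)}(\sbo_l)=0$ together with \eqref{w=x-dot-v-x-star-v}; your argument unpacks these citations, in particular making explicit the bracket-length induction from the generators $e_{\alpha_i}(j)$ to all of $\sn_l$, which the paper leaves implicit.
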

\begin{proof}
The first statement follows from \eqref{star-definition}. The second statement follows from Proposition \ref{A-tilde-A-n-mod-iso}. By the definition of $\widetilde{A_{(k)}}$ one has $\widetilde{A_{(k-j)}}\cap U_{(k-j-1)}(\sbo_l)=0$. By Lemma \ref{dot-tau-commute-lemma} one has $x\cdot v=0$ if and only if $x\star v=0$. This proves the final statement.
\end{proof}

\begin{lemma}\label{ei-star-U(b)-2}Let $p\in\mathbb{N}, u\in\widetilde{A_{(p-1)}}$ and $e_{\alpha_i}(j)\star u\in\widetilde{A_{(p)}}$. For any $v\in W(\sbo_l)$ and $s\in U(\sbo_l)$ so that
\beqs e_{\alpha_i}(j)\cdot (uv)=(e_{\alpha_i}(j)\star u)v+s \eeqs
one has
\beqs s\in\sum_{q=0}^{p-2}\widetilde{A_{(q)}}\otimes W(\sbo_l).\eeqs
\end{lemma}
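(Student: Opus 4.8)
The plan is to unravel the definitions of $e_{\alpha_i}(j)\star u$ and of the $\eta$-reduced action on products, and then track the $\widetilde{A}$-filtration degree of each term that appears. Recall from Lemma~\ref{n-dot-u-Z-lemma} that $e_{\alpha_i}(j)\cdot(uv)=(e_{\alpha_i}(j)\cdot u)v$ since $v\in W(\sbo_l)$ and the $\eta$-reduced action satisfies $x\cdot(uv)=(x\cdot u)v$. So the whole problem reduces to understanding $e_{\alpha_i}(j)\cdot u$ for $u\in\widetilde{A_{(p-1)}}$: write $s=\big(e_{\alpha_i}(j)\cdot u-e_{\alpha_i}(j)\star u\big)v$, so it suffices to show $e_{\alpha_i}(j)\cdot u-e_{\alpha_i}(j)\star u\in\sum_{q=0}^{p-2}\widetilde{A_{(q)}}\otimes W(\sbo_l)$.

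First I would apply Lemma~\ref{ei-star-U(b)} (really Lemma~\ref{dot-tau-commute-lemma} with $x=e_{\alpha_i}(j)\in{\sg_{[1]}}_l$, $k=p-1$, $j=1$): it gives $e_{\alpha_i}(j)\cdot u\in U_{(p-2)}(\sbo_l)$ and, by the very definition \eqref{x-dot-v-decomposition}--\eqref{star-definition} of $\star$, the decomposition $e_{\alpha_i}(j)\cdot u=\sum_{q'\geq 0}u_{q'}$ with $u_{q'}\in\widetilde{A_{(q')}}\otimes W_{(q'')}(\sbo_l)$, $q'+q''=p-2$, and $e_{\alpha_i}(j)\star u=u_{p-2}$, the top $\widetilde{A}$-degree piece. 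Hence $e_{\alpha_i}(j)\cdot u-e_{\alpha_i}(j)\star u=\sum_{q'=0}^{p-3}u_{q'}\in\sum_{q'=0}^{p-3}\widetilde{A_{(q')}}\otimes W(\sbo_l)$, which is even contained in the claimed sum (note: the bound in the statement is $q\leq p-2$, and $p-3\leq p-2$, so this is consistent; in fact here I would double-check whether the intended claim involves $e_{\alpha_i}(j)\star u\in\widetilde{A_{(p)}}$ forcing $u\in\widetilde{A_{(p-1)}}$ to have been obtained as such a $\star$-image, in which case the degree count would shift by one and the $p-2$ bound would be exactly sharp). Then $s=\big(\sum_{q'=0}^{p-2}u_{q'}\big)v$, and since each $u_{q'}\in\widetilde{A_{(q')}}\otimes W(\sbo_l)$ and $W(\sbo_l)$ is closed under multiplication (Lemma~\ref{lemma-Z-W-ep} / Theorem~\ref{whittaker-model-theorem}), right multiplication by $v\in W(\sbo_l)$ preserves each summand $\widetilde{A_{(q')}}\otimes W(\sbo_l)$ by Theorem~\ref{U(b)-decom-A-W-homegeneous-theorem}. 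This gives $s\in\sum_{q=0}^{p-2}\widetilde{A_{(q)}}\otimes W(\sbo_l)$.

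The main obstacle I anticipate is purely bookkeeping: getting the exact upper index of the filtration right. One must be careful that $e_{\alpha_i}(j)\cdot u$ lands in $U_{(p-2)}(\sbo_l)$ (degree drop by $1$ because $e_{\alpha_i}(j)\in{\sg_{[1]}}_l$, applied to $u\in\widetilde{A_{(p-1)}}\subseteq U_{(p-1)}(\sbo_l)$), then that under the isomorphism \eqref{U(b)-decom-A-W-homegeneous} of Theorem~\ref{U(b)-decom-A-W-homegeneous-theorem} this space is $\bigoplus_{p'+q'=p-2}\widetilde{A_{(p')}}\otimes W_{(q')}(\sbo_l)$, so the $\widetilde{A}$-degrees occurring are $0,1,\dots,p-2$; subtracting off the top piece $e_{\alpha_i}(j)\star u=u_{p-2}$ leaves $\widetilde{A}$-degrees $0,\dots,p-3$, which sits inside $\sum_{q=0}^{p-2}\widetilde{A_{(q)}}\otimes W(\sbo_l)$. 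So modulo keeping the indices straight, the statement follows from Theorem~\ref{U(b)-decom-A-W-homegeneous-theorem}, Lemma~\ref{n-dot-u-Z-lemma}, Lemma~\ref{dot-tau-commute-lemma} and the definition of $\star$, with no genuinely new computation required.
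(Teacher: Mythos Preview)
Your approach is essentially the same as the paper's: use Lemma~\ref{n-dot-u-Z-lemma} to write $e_{\alpha_i}(j)\cdot(uv)=(e_{\alpha_i}(j)\cdot u)v$, then split off $e_{\alpha_i}(j)\star u$ from $e_{\alpha_i}(j)\cdot u$ using Lemma~\ref{dot-tau-commute-lemma}, and finally invoke Theorem~\ref{U(b)-decom-A-W-homegeneous-theorem} to place the remainder $wv$ in the correct filtration piece. Your suspicion about the indices is well founded: as written the hypotheses should read $u\in\widetilde{A_{(p)}}$ and $e_{\alpha_i}(j)\star u\in\widetilde{A_{(p-1)}}$ (this is how the lemma is applied in the proof of Lemma~\ref{U-w(b)=A-times-W-V}), and with that correction the paper's ``$w\in U_{(p-2)}(\sbo_l)$'' from \eqref{w=x-dot-v-x-star-v} gives exactly the sharp bound $\sum_{q=0}^{p-2}\widetilde{A_{(q)}}\otimes W(\sbo_l)$ after applying \eqref{U(b)-decom-A-W-homegeneous}.
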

\begin{proof}
By Lemma \ref{n-dot-u-Z-lemma} and Lemma \ref{dot-tau-commute-lemma} one has
\beqs e_{\alpha_i}(j)\cdot(uv)=(e_{\alpha_i}(j)\cdot u)v=(e_{\alpha_i}(j)\star u)v+wv\eeqs
where $w\in U_{(p-2)}$. Then this lemma follows from Theorem \ref{U(b)-decom-A-W-homegeneous-theorem}.\end{proof}

\section{Whittaker modules}

In this section we classify all nonsingular Whittaker modules over Takiff algebra $\sg_l$. In Subsection 5.1, the proof has a little difference from that of \cite{Kos3}. In Subsection 5.2, the proof for all main results is just the duplication of those in \cite{Kos3}. So we write detailed proof in Subsection 5.1 and we only list main results without proof in Subsection 5.2.

\subsection{}
For any $U(\sg_l)$-module $V$ we let $U(\sg_l)_V$ be the annihilator of $V$. Then $U(\sg_l)_V$ defines an ideal $T_V=T\cap U(\sg_l)_V$ for any subalgebra $T\subseteq U(\sg_l)$.

Now let $V$ be any $U(\sg_l)$-module. For any $u\in U(\sg_l), v\in V$ the action is denoted by $uv\in V$. A vector $w\in V$ is called a Whittaker vector of type $\eta$ if
\beqs xw=\eta(x)w,\;\forall x\in\sn_l.\eeqs
A $U(\sg_l)$-module $V$ is called a Whittaker module if it is cyclically generated by a Whittaker vector.

Now we assume that $V$ is a Whittaker module generated by a Whittaker vector $w$. Let $U(\sg_l)_w$ be the annihilator of $w$. Then $U(\sg_l)_w$ is a left ideal, $U(\sg_l)_V$ is a two-sided ideal and $U(\sg_l)_V\subseteq U(\sg_l)_w$. Clearly one has $V\cong U(\sg_l)/U(\sg_l)_w$ as $U(\sg_l)$-modules.

\begin{lemma}\label{U-w(b)=A-times-W-V}Let $X=\{v\in U(\sbo_l)|(x\cdot v)w=0\;for\;all\;x\in\sn_l\}$, $W(\sbo_l)_V=(Z(\sg_l)_V)^\eta$ and $U(\sbo_l)_w=U(\sbo_l)\cap U(\sg_l)_w$.  Then
\beqs X=(\widetilde{A}\otimes W(\sbo_l)_V)+W(\sbo_l)\eeqs
Moreover, one has
\beqs U(\sbo_l)_w=\widetilde{A}\otimes W(\sbo_l)_V\subseteq X.\eeqs
\end{lemma}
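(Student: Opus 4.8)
The plan is to fix a convenient model for $V$, record the three easy inclusions, prove the one nontrivial inclusion by induction on $\widetilde A$-degree using the non-degeneracy of the $\star$-action, and finally deduce the description of $U(\sbo_l)_w$. First I would set up the model. Since $w$ is a Whittaker vector of type $\eta$ one has $U(\sg_l)U_\eta(\sn_l)w=0$, hence $U(\sg_l)U_\eta(\sn_l)\subseteq U(\sg_l)_w$ and $V$ is a quotient of $W_\eta\cong U(\sbo_l)$; identifying $V=U(\sbo_l)w$ with $w$ the image of $1$, the relation $u1_\eta=u^\eta1_\eta$ descends to $uw=u^\eta w$ for all $u\in U(\sg_l)$. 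Combined with Lemma \ref{adjoint-u-eta} and $xw=\eta(x)w$, this gives the basic identity $(x\cdot v)w=[x,v]^\eta w=[x,v]w=(x-\eta(x))(vw)$ for $v\in U(\sbo_l)$, $x\in\sn_l$. Three consequences follow: (i) if $vw=0$ then $(x\cdot v)w=0$, so $U(\sbo_l)_w\subseteq X$ and $U(\sbo_l)_w$ is $\sn_l$-stable; (ii) if $v\in X$ then $vw$ is a Whittaker vector, so $(x\cdot v)w=(x-\eta(x))(vw)=0$ and in fact $\sn_l\cdot X\subseteq U(\sbo_l)_w$; (iii) $W(\sbo_l)\subseteq X$, since the $\eta$-reduced action of $\sn_l$ annihilates $W(\sbo_l)$ by \eqref{n-dot-Z=0}. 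Moreover, if $z=\widetilde I^{\eta}$ with $\widetilde I\in Z(\sg_l)_V$ then $zw=\widetilde Iw=0$, so $(az)w=a(zw)=0$ for every $a\in\widetilde A$; hence $\widetilde A\otimes W(\sbo_l)_V\subseteq U(\sbo_l)_w$. Combining, $(\widetilde A\otimes W(\sbo_l)_V)+W(\sbo_l)\subseteq X$. I would also record that $W(\sbo_l)\cap U(\sbo_l)_w=W(\sbo_l)_V$: for $z=\widetilde I^{\eta}\in W(\sbo_l)$ one has $zw=\widetilde Iw$, and since $\widetilde I$ is central and $V=U(\sg_l)w$, $\widetilde Iw=0$ is equivalent to $\widetilde IV=0$, i.e. to $\widetilde I\in Z(\sg_l)_V$, i.e. to $z\in W(\sbo_l)_V$ (the last equivalence using that $\rho_\eta$ is injective on $Z(\sg_l)$, Theorem \ref{whittaker-model-theorem}).

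The heart of the argument is the reverse inclusion $X\subseteq(\widetilde A\otimes W(\sbo_l)_V)+W(\sbo_l)$. Given $v\in X$, decompose $v=\sum_i a_iz_i$ with $\{a_i\}$ a homogeneous basis of $\widetilde A$ and $z_i\in W(\sbo_l)$, using Theorem \ref{U(b)-decom-A-W-homegeneous-theorem}, and induct on $N=\max\{\deg a_i:z_i\neq0\}$, the case $N=0$ being $v\in W(\sbo_l)$. For $N\geq1$ I would apply each generator $x=e_{\alpha_j}(m)$ of $\sn_l$: by Lemma \ref{n-dot-u-Z-lemma} and Lemma \ref{ei-star-U(b)-2} one has $x\cdot(a_iz_i)=(x\star a_i)z_i+s_i$ with $x\star a_i\in\widetilde{A_{(\deg a_i-1)}}$ and $s_i$ of $\widetilde A$-degree $\leq\deg a_i-2$, so $x\cdot v$ has $\widetilde A$-degree $\leq N-1$ and its $\widetilde A$-degree-$(N-1)$ component is $\sum_{\deg a_i=N}(x\star a_i)z_i$. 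Since $x\cdot v\in U(\sbo_l)_w\subseteq X$, the inductive hypothesis puts $x\cdot v$ into $(\widetilde A\otimes W(\sbo_l)_V)+W(\sbo_l)$; its $\widetilde A$-degree-$(N-1)$ component therefore lies in $\widetilde{A_{(N-1)}}\otimes W(\sbo_l)_V$ (for $N\ge2$, because the $W(\sbo_l)$-summand occupies $\widetilde A$-degree $0$ only; for $N=1$ one instead has $x\cdot v=\sum_{\deg a_i=1}(x\star a_i)z_i\in W(\sbo_l)$ with $x\star a_i\in\mc1$, so $x\cdot v\in W(\sbo_l)\cap U(\sbo_l)_w=W(\sbo_l)_V$). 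Writing this element in the $\widetilde A$-basis and using the uniqueness of the decomposition in Theorem \ref{U(b)-decom-A-W-homegeneous-theorem}, every coefficient, which is a fixed linear combination of the $z_i$ with $\deg a_i=N$, lies in $W(\sbo_l)_V$. As $x$ ranges over the generators $e_{\alpha_i}(j)$, these linear combinations span enough to recover each $z_i$ individually, precisely because by the second assertion of Lemma \ref{ei-star-U(b)} the maps $a\mapsto e_{\alpha_i}(j)\star a$ have no common nonzero kernel on $\widetilde{A_{(N)}}$ when $N\ge1$; hence $z_i\in W(\sbo_l)_V$ for every $i$ with $\deg a_i=N$. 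Then $v':=\sum_{\deg a_i=N}a_iz_i\in\widetilde A\otimes W(\sbo_l)_V\subseteq X$, so $v-v'\in X$ has $\widetilde A$-degree $\leq N-1$, and the inductive hypothesis applied to $v-v'$ finishes this inclusion.

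With $X=(\widetilde A\otimes W(\sbo_l)_V)+W(\sbo_l)$ in hand, the ``moreover'' is immediate: $\widetilde A\otimes W(\sbo_l)_V\subseteq U(\sbo_l)_w$ was shown above, and conversely any $v\in U(\sbo_l)_w\subseteq X$ can be written $v=v_1+z$ with $v_1\in\widetilde A\otimes W(\sbo_l)_V$ and $z\in W(\sbo_l)$; then $0=vw=v_1w+zw=zw$, so $z\in W(\sbo_l)\cap U(\sbo_l)_w=W(\sbo_l)_V\subseteq\widetilde A\otimes W(\sbo_l)_V$, whence $v\in\widetilde A\otimes W(\sbo_l)_V$. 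I expect the main obstacle to be the reverse inclusion for $X$ in the second paragraph: keeping the bookkeeping of leading terms under the $\eta$-reduced $\sn_l$-action straight and, above all, invoking the non-degeneracy statement in Lemma \ref{ei-star-U(b)} to pass from ``all $\star$-images of the top coefficients lie in $W(\sbo_l)_V$'' back to ``each top coefficient itself lies in $W(\sbo_l)_V$''; everything else is formal manipulation with the identifications $V\cong U(\sbo_l)$, $W_\eta\cong U(\sbo_l)$ and the decomposition $U(\sbo_l)\cong\widetilde A\otimes W(\sbo_l)$.
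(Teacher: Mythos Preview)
Your proof is correct and follows essentially the same strategy as the paper's: both establish the easy inclusions, then use the decomposition $U(\sbo_l)\cong\widetilde A\otimes W(\sbo_l)$ together with the non-degeneracy of the $\star$-action on $\widetilde A_{(k)}$ (Lemma~\ref{ei-star-U(b)}) and the leading-term control of Lemma~\ref{ei-star-U(b)-2} to pin down the top $\widetilde A$-coefficients. The only organizational differences are that the paper fixes an explicit complement $\overline{W}$ of $W(\sbo_l)_V$ in $W(\sbo_l)$, sets $M=\bigoplus_{r\ge1}\widetilde A_{(r)}\otimes\overline W$, and argues by minimal counterexample that $X\cap M=0$, whereas you run a direct upward induction on the maximal $\widetilde A$-degree and pass implicitly to the quotient $W(\sbo_l)/W(\sbo_l)_V$; and the paper derives $U(\sbo_l)_w=\widetilde A\otimes W(\sbo_l)_V$ alongside the computation of $X$, while you deduce it afterwards from the formula for $X$ together with $W(\sbo_l)\cap U(\sbo_l)_w=W(\sbo_l)_V$. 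These are equivalent packagings of the same argument.
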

\begin{proof}
For any $x\in\sn$ and $v\in W(\sbo)_l$, one has by \eqref{n-dot-Z=0} that
\beqs (x\cdot v)w=((x\cdot1)v)w=((x^\eta-\eta(x))v)w=0.
\eeqs
Thus $W(\sbo_l)\subseteq X$.

For any $x\in\sn_l, u\in Z(\sg_l)_V$ and $v\in U(\sbo)$ one has
\beqs x\cdot(vu^\eta)=(x\cdot v)u^\eta\eeqs
by Lemma \ref{n-dot-u-Z-lemma}. Clearly $u$ and hence $u^\eta$ is in $U_w$. Thus $vu^\eta\in X$ and therefore
$\widetilde{A}\otimes W(\sbo)_V\subseteq X$ and
hence
\beqs (\widetilde{A}\otimes W(\sbo_l)_V)+W(\sbo_l)\subseteq X.\eeqs

Let $\overline{W}$ be any fixed complement subspace of $W(\sbo_l)_V$ in $W(\sbo)$. Denoted by
\beqs M_r=\widetilde{A}_{(r)}\otimes \overline{W},\eeqs
then one has
\beqs M=\bigoplus_{r=1}^\infty M_r,\eeqs
and
\beqs U(\sbo_l)=M\oplus (\widetilde{A}\otimes W(\sbo_l)_V+W(\sbo_l)).\eeqs

Let $M_{[k]}=\sum_{r=1}^kM_r$ so that the spaces $\{M_{[k]}|k\geq1\}$  are a filtration of $M$. Assume $X\cap M\not=0$ and let $k$ be the minimal such that there exists $0\not=y\in X\cap M_{[k]}$. Then $y$ can be uniquely written as
\beqs y=\sum_{r=1}^ky_r\eeqs
such that $y_r\in\widetilde{A}_{(r)}\otimes W(\sbo)$. Of course one has $y_k\not=0$.

Noting that $\widetilde{A}$ is an $\sn_l$-module with respect to $x\star v$. For each $1\leq i\leq n, 0\leq j\leq l$, let $C_{i,j}=\{v\in\widetilde{A}_{(k)}|e_{\beta_i}(j)*v=0\}$. One has
\beqs \bigcap_{i=1}^{n}\bigcap_{j=0}^lC_{i,j}=0\eeqs
by Lemma \ref{ei-star-U(b)}. Hence there exists $(i,j)$ such that $y_k\not\in C_{i,j}\otimes\overline{W}$.

For any $q\in\mathbb{N}$ let
\beqs \widetilde{A}_{[q]}=\sum_{p\leq q}\widetilde{A}_{(p)}.\eeqs
By Lemma \ref{ei-star-U(b)-2} one has $e_{\beta_i}(j)\cdot y_r\in\widetilde{A}_{[k-2]}\otimes W(\sbo)$ for $1\leq r\leq k-1$.

If $D_{i,j}$ is the linear complement to $C_{i,j}$ in $\widetilde{A}_{(k)}$ we may write $y_k=u_k+v_k$ such that
\beqs u_k\in C_{i,j}\otimes \overline{W}, v_k\in D_{i,j}\otimes\overline{W}\eeqs
and $v_k\not=0$.  But now $e_{\beta_i}(j)\cdot u_k\in\widetilde{A}_{[k-2]}\otimes W(\sbo)$ by Lemma \ref{ei-star-U(b)-2}. On the other hand the map
$D_{i,j}\rightarrow \widetilde{A}_{(k-1)}$ given by $u\mapsto e_{\beta_i}(j)*u$ is injective. Thus one has $e_{\beta_i}(j)\cdot v_k=z_{k-1}+s'$ such that $s'\in\widetilde{A}_{[k-2]}\otimes W(\sbo)$ and
\beqs0\not=z_{k-1}\in\widetilde{A}_{(k-1)}\otimes\overline{W}=M_{k-1}.\eeqs
So we can assume
\beq\label{ei-y-s} e_{\beta_i}(j)\cdot y=z_{k-1}+s\eeq
for some $s\in\widetilde{A}_{[k-2]}\otimes W(\sbo)$.

It is clear that $W(\sbo_l)\cap U_w(\sbo_l)\supseteq W(\sbo_l)_V$. If $v\in W(\sbo_l)\cap U_w(\sbo_l)$ then it can be uniquely written as $v=u^\eta$ for some $u\in Z(\sg_l)$. But $vw=0$ implies $uw=0$ and hence $u\in Z(\sg_l)\cap U_w(\sbo_l)$. Since $V$ is cyclically generated by $w$, this implies $v=u^\eta\in W(\sbo_l)_V$. Thus one has
\beqs W(\sbo_l)\cap U_w(\sbo_l)=W(\sbo_l)_V.\eeqs

By the definition of $X$ one has $e_{\beta_i}(j)\cdot y\in U_w(\sbo_l)$.  Return to \eqref{ei-y-s}.

If $k=1$, then $s=0$ and $0\not=z_0=e_{\beta_i}(j)\cdot y\in\overline{W}\cap U_w(\sbo)$ but which is zero. One obtains a contradiction. Then $k>1$ and
$e_{\beta_i}(j)\cdot y\in X$. Write $W(\sbo)=\overline{W}\oplus W(\sbo)_V$ we may write $s=s_1+s_2$ where
$s_1\in M_{[k-2]}$ and $s_2\in (\widetilde{A}\otimes W(\sbo_l)_V)+W(\sbo_l)$. Put $M_{[0]}=0$. Thus $e_{\beta_i}(j)\cdot y=t_2+s_2$ where $t_2=z_{k-1}+s_1$. Then
$t_2=e_{\beta_i}(j)\cdot  y-s_1\in X$ since $e_{\beta_i}(j)\cdot y\in X$ and $s_2\in \widetilde{A}\otimes W(\sbo_l)_V+W(\sbo_l)\subseteq X$. However, $0\not=t_2\in M_{[k-1]}$. Thus $0\not=t_2\in X\cap M_{[k-1]}$. Again one obtains a contradiction. Hence $X=\widetilde{A}\otimes W(\sbo_l)_V+W(\sbo_l)$.

Applying $\rho^\eta$ to $U(\sg_l)Z(\sg_l)_V\subseteq U_w$ one has $\widetilde{A}\otimes W(\sbo_l)_V\subseteq U_w(\sbo_l)$.  By $W(\sbo_l)\cap U_w(\sbo_l)=(Z(\sg_l)_V)^\eta=W(\sbo_l)_V$, one has
\beqs U_w(\sbo_l)=\widetilde{A}\otimes W(\sbo_l)_V.\eeqs
\end{proof}

\begin{theo}Let $V$ be any Whittaker module over $U(\sg_l)$ cyclically generated by a Whitaker vector $w$ and let $U(\sg_l)_w$ be the annihilator of $w$. Then one has
\beqs U(\sg_l)_w=U(\sg_l)Z(\sg_l)_V+U(\sg_l)U(\sn_l)_\eta.\eeqs
\end{theo}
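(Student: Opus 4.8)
The plan is to prove the two inclusions separately, using the structural decomposition $W_\eta\cong U(\sbo_l)$ as $U(\sg_l)$-modules together with Lemma \ref{U-w(b)=A-times-W-V}. First, since $Z(\sg_l)_V$ annihilates $V$, every element of $U(\sg_l)Z(\sg_l)_V$ annihilates $w$, and every element of $U(\sg_l)U(\sn_l)_\eta$ annihilates $w$ because $(U(\sn_l)_\eta)w=0$ by the Whittaker condition $xw=\eta(x)w$. Hence the right-hand side is contained in $U(\sg_l)_w$; this direction is immediate.

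For the reverse inclusion, I would use the isomorphism $U(\sg_l)\cong U(\sbo_l)\oplus U(\sg_l)U_\eta(\sn_l)$ from \eqref{decomposition-U(g)} to reduce the problem to $U(\sbo_l)$. Given $u\in U(\sg_l)_w$, write $u=u^\eta+u''$ with $u''\in U(\sg_l)U_\eta(\sn_l)$; since $u''w=0$ automatically and $uw=0$, we get $u^\eta w=0$, i.e. $u^\eta\in U(\sbo_l)_w$. So it suffices to show $U(\sbo_l)_w\subseteq U(\sg_l)Z(\sg_l)_V+U(\sg_l)U_\eta(\sn_l)$. But Lemma \ref{U-w(b)=A-times-W-V} gives $U(\sbo_l)_w=\widetilde{A}\otimes W(\sbo_l)_V$, and $W(\sbo_l)_V=(Z(\sg_l)_V)^\eta=\rho_\eta(Z(\sg_l)_V)$. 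Each element of $W(\sbo_l)_V$ is therefore of the form $z^\eta$ with $z\in Z(\sg_l)_V$, so $z-z^\eta\in U(\sg_l)U_\eta(\sn_l)$, and multiplying on the left by an element of $\widetilde{A}\subseteq U(\sbo_l)$ keeps us inside $U(\sg_l)Z(\sg_l)_V+U(\sg_l)U_\eta(\sn_l)$. This shows $\widetilde{A}\otimes W(\sbo_l)_V\subseteq U(\sg_l)Z(\sg_l)_V+U(\sg_l)U_\eta(\sn_l)$, completing the inclusion.

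The one point that needs care, and which I expect to be the main obstacle, is checking that $U(\sbo_l)_w$ really equals $U(\sbo_l)\cap U(\sg_l)_w$ under the identification $V\cong U(\sg_l)/U(\sg_l)_w\cong U(\sbo_l)$ coming from \eqref{u-eta}, so that Lemma \ref{U-w(b)=A-times-W-V} applies with the intended meaning: an element $v\in U(\sbo_l)$ kills $w$ exactly when its image in $W_\eta$ vanishes, which is exactly $v\in U(\sg_l)_w$. Once this bookkeeping is in place the argument is a short combination of \eqref{decomposition-U(g)}, the definition $W(\sbo_l)_V=(Z(\sg_l)_V)^\eta$, and the computation of $U(\sbo_l)_w$ already done in Lemma \ref{U-w(b)=A-times-W-V}. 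In particular no new hard estimate is required; the work is in the proof of that lemma, which has been carried out.
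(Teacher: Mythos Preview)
Your proposal is correct and follows essentially the same route as the paper: both use the decomposition \eqref{decomposition-U(g)} to reduce to $U(\sbo_l)_w$, invoke Lemma \ref{U-w(b)=A-times-W-V} to identify $U(\sbo_l)_w=\widetilde{A}\otimes W(\sbo_l)_V$, and then use $W(\sbo_l)_V=(Z(\sg_l)_V)^\eta$ together with $z-z^\eta\in U(\sg_l)U_\eta(\sn_l)$ to land in the right-hand side. The bookkeeping point you flag is a non-issue, since $U(\sbo_l)_w=U(\sbo_l)\cap U(\sg_l)_w$ is precisely the definition used in Lemma \ref{U-w(b)=A-times-W-V}.
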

\begin{proof}It is clear that $U(\sg_l)Z(\sg_l)_V+U(\sg_l)U(\sn_l)_\eta\subseteq U(\sg_l)_w$.

Through the same process in \eqref{lemma-Z-W-ep} one can prove $(uz)^\eta=u^\eta z^\eta$. Thus $(U(\sg_l)Z(\sg_l))^\eta=U(\sbo_l)W(\sbo_l)$. Noting that the map $\rho_\eta$ is the projection with kernel $U(\sg_l)U_\eta(\sn_l)$ and it induces the isomorphism $Z(\sg_l)\rightarrow W(\sbo_l)=Z(\sg_l)^\eta$. The ideal $Z(\sg_l)_V$ of $Z(\sg_l)$ is mapped to an ideal $(Z(\sg_l)_V)^\eta=W(\sbo_l)_V$ of $W(\sbo_l)$. By Theorem \ref{U(b)-decom-A-W-homegeneous-theorem} and \eqref{decomposition-U(g)} one has
\beqs &&U(\sg_l)Z(\sg_l)_V+U(\sg_l)U(\sn_l)_\eta=U(\sg_l)W(\sg_l)_V+U(\sg_l)U(\sn_l)_\eta\\
&&=U(\sbo_l)W(\sg_l)_V+U(\sg_l)U(\sn_l)_\eta=\widetilde{A}\otimes W(\sg_l)_V+U(\sg_l)U(\sn_l)_\eta.\eeqs
But then by Lemma \ref{U-w(b)=A-times-W-V}
\beqs U(\sg_l)_w=\widetilde{A}\otimes W(\sg_l)_V\subseteq U(\sg_l)Z(\sg_l)_V+U(\sg_l)U(\sn_l)_\eta.\eeqs
We have finished the proof.\end{proof}

\subsection{} Now we can classify all Whittaker $\sg_l$-modules  up to equivalence. The following results respectively are analogues of Theorems 3.2-3.6 in \cite{Kos3}.

\begin{theo}Let $V$ be any Whittaker module over $U(\sg_l)$, let $U(\sg_l)_V$ be the annihilator of $V$ and let $Z(U(\sg_l))$ be the center of $U(\sg_l)_V$. Then the correspondence
\beqs V\mapsto Z(U(\sg_l))_V=Z(U(\sg_l))\cap U(\sg_l)\eeqs
sets up a bijection between the set of all equivalence classes of Whittaker modules and the set of all ideals of $Z(U(\sg_l))$.\end{theo}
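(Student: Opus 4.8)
The plan is to derive the classification from the cyclic-annihilator formula of the preceding theorem. Throughout, $\eta$ is the fixed nonsingular character, and for a Whittaker module $V$ cyclically generated by a Whittaker vector $w$ of type $\eta$ that result gives $U(\sg_l)_w=U(\sg_l)Z(\sg_l)_V+U(\sg_l)U(\sn_l)_\eta$, where $Z(\sg_l)_V=Z(\sg_l)\cap U(\sg_l)_V$. Since $V=U(\sg_l)w$, a central element annihilates $V$ iff it annihilates $w$, so $Z(\sg_l)_V=Z(\sg_l)\cap U(\sg_l)_w$ depends only on the isomorphism class of $V$; hence $V\mapsto Z(\sg_l)_V$ is well defined on equivalence classes. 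It then remains to prove (i) that $Z(\sg_l)_V$ determines $V$ up to isomorphism and (ii) that every ideal of $Z(\sg_l)$ arises this way, with a natural assignment $\mathcal I\mapsto Y_{\mathcal I}$ inverse to the correspondence.

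For (i): if $V_1=U(\sg_l)w_1$ and $V_2=U(\sg_l)w_2$ have $Z(\sg_l)_{V_1}=Z(\sg_l)_{V_2}=:\mathcal I$, then the formula gives $U(\sg_l)_{w_1}=U(\sg_l)_{w_2}=U(\sg_l)\mathcal I+U(\sg_l)U(\sn_l)_\eta$, so both $V_i$ are isomorphic to $Y_{\mathcal I}:=U(\sg_l)/(U(\sg_l)\mathcal I+U(\sg_l)U(\sn_l)_\eta)$. In particular every Whittaker module is isomorphic to $Y_{Z(\sg_l)_V}$, so (ii) will follow once $Z(\sg_l)_{Y_{\mathcal I}}=\mathcal I$ is checked.

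For (ii): $Y_{\mathcal I}$ is generated by the class $\bar 1$ of $1$, which is a Whittaker vector of type $\eta$. Writing $\mathcal I^\eta=\rho_\eta(\mathcal I)$, an ideal of the commutative polynomial algebra $W(\sbo_l)$, the heart of the argument is that $\rho_\eta$ maps $U(\sg_l)\mathcal I+U(\sg_l)U(\sn_l)_\eta$ onto $\widetilde{A}\otimes\mathcal I^\eta$ inside $\widetilde{A}\otimes W(\sbo_l)=U(\sbo_l)$: by the argument of Lemma \ref{lemma-Z-W-ep} applied with a central factor, $(uz)^\eta=u^\eta z^\eta$ for $u\in U(\sg_l)$ and $z\in\mathcal I$, so $\rho_\eta(U(\sg_l)\mathcal I)=U(\sbo_l)\mathcal I^\eta$; then, since $W(\sbo_l)$ is commutative and $U(\sbo_l)$ is free over $W(\sbo_l)$ on $\widetilde{A}$ (Theorem \ref{U(b)-decom-A-W-homegeneous-theorem}), $U(\sbo_l)\mathcal I^\eta=\widetilde{A}\otimes\mathcal I^\eta$; and $\ker\rho_\eta=U(\sg_l)U(\sn_l)_\eta$ already lies in the ideal. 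This yields $Y_{\mathcal I}\cong U(\sbo_l)/(\widetilde{A}\otimes\mathcal I^\eta)\cong\widetilde{A}\otimes(W(\sbo_l)/\mathcal I^\eta)$ as vector spaces (in particular $Y_{\mathcal I}\neq0$ when $\mathcal I\neq Z(\sg_l)$). Now if $z\in Z(\sg_l)$ kills $\bar 1$, then $z^\eta\in\widetilde{A}\otimes\mathcal I^\eta$; but $z^\eta\in W(\sbo_l)=\widetilde{A}_{(0)}\otimes W(\sbo_l)$ with $\widetilde{A}_{(0)}=\mc1$, so directness forces $z^\eta\in\mathcal I^\eta$, whence $z\in\mathcal I$ because $\rho_\eta\colon Z(\sg_l)\to W(\sbo_l)$ is an algebra isomorphism carrying $\mathcal I$ onto $\mathcal I^\eta$. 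As $\mathcal I\subseteq Z(\sg_l)_{Y_{\mathcal I}}$ is obvious, $Z(\sg_l)_{Y_{\mathcal I}}=\mathcal I$, and together with (i) this exhibits $\mathcal I\mapsto Y_{\mathcal I}$ as a two-sided inverse of $V\mapsto Z(\sg_l)_V$, giving the bijection.

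The main obstacle is exactly the identity $\big(U(\sg_l)\mathcal I+U(\sg_l)U(\sn_l)_\eta\big)\cap Z(\sg_l)=\mathcal I$ in step (ii): it forces one to transport the two-sided piece $U(\sg_l)\mathcal I$ through the projection $\rho_\eta$ (which is not an algebra map on all of $U(\sg_l)$) onto the clean right ideal $\widetilde{A}\otimes\mathcal I^\eta$ of $U(\sbo_l)$, leaning on the multiplicativity of $\rho_\eta$ against central elements, the commutativity of $W(\sbo_l)$, and the freeness $U(\sbo_l)=\widetilde{A}\otimes W(\sbo_l)$. One should also settle the degenerate case $\mathcal I=Z(\sg_l)$ (where $Y_{\mathcal I}=0$), either by admitting the zero module as a formal Whittaker module or by excluding the unit ideal; the remaining points — that $\bar 1$ is a cyclic Whittaker vector of type $\eta$ and that the correspondence respects equivalence — are routine.
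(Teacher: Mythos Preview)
Your proposal is correct and follows exactly the Kostant approach that the paper invokes: the paper itself gives no detailed proof of this statement, noting only that it is an analogue of Theorem~3.2 in \cite{Kos3} whose proof is a direct duplication. Your argument---deducing both injectivity and surjectivity from the preceding annihilator formula $U(\sg_l)_w=U(\sg_l)Z(\sg_l)_V+U(\sg_l)U(\sn_l)_\eta$ together with the freeness $U(\sbo_l)=\widetilde{A}\otimes W(\sbo_l)$ and the multiplicativity of $\rho_\eta$ against central elements---is precisely that duplication spelled out, and your remark on the degenerate case $\mathcal I=Z(\sg_l)$ is a fair caveat about the statement as written.
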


\begin{theo}Let $V$ be any module over $U(\sg_l)$. Then $V$ is a Whittaker  module if and only if one has an isomorphism
\beqs V\cong U(\sg_l)\otimes_{Z(U(\sg_l))\otimes U(\sn_l)}(Z(U(\sg_l)/Z_*) \eeqs
of $U(\sg_l)$-modules for some ideal $Z_*$ of $Z(U(\sg_l))$. Moreover in such a case the ideal $Z_*$ is uniquely given by $Z_*=Z(U(\sg_l))_V$.
\end{theo}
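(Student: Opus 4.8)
The plan is to reduce the statement to the description of the annihilator of a cyclic Whittaker vector established above, together with the decomposition $U(\sbo_l)=\widetilde{A}\otimes W(\sbo_l)$ from Theorem~\ref{U(b)-decom-A-W-homegeneous-theorem} and the algebra isomorphism $\rho_\eta\colon Z(\sg_l)\to W(\sbo_l)$ from Theorem~\ref{whittaker-model-theorem}. Write $R$ for the subalgebra $Z(\sg_l)\,U(\sn_l)\subseteq U(\sg_l)$, a homomorphic image of $Z(\sg_l)\otimes U(\sn_l)$; for an ideal $Z_*$ of $Z(\sg_l)$ regard $Z(\sg_l)/Z_*$ as a left $R$-module on which $Z(\sg_l)$ acts through the quotient and $U(\sn_l)$ through the character $\eta$, and set $M_{Z_*}=U(\sg_l)\otimes_R\bigl(Z(\sg_l)/Z_*\bigr)$.

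For the ``if'' direction I would check that $w=1\otimes\bar1$ generates $M_{Z_*}$ over $U(\sg_l)$ (because $\bar1$ already generates $Z(\sg_l)/Z_*$ over $Z(\sg_l)\subseteq R$) and that $x w=x\otimes\bar1=1\otimes\eta(x)\bar1=\eta(x)w$ for $x\in\sn_l$; hence $M_{Z_*}$ is a Whittaker module of type $\eta$. Next comes the bookkeeping step: using $U(\sn_l)=\mc 1\oplus U_\eta(\sn_l)$ one sees that the annihilator of $\bar1$ in $R$ is the left ideal $I=Z_*\cdot 1+Z(\sg_l)\,U_\eta(\sn_l)$, so $Z(\sg_l)/Z_*\cong R/I$ as $R$-modules; applying $U(\sg_l)\otimes_R(-)$ and using right exactness then gives
\beqs M_{Z_*}\ \cong\ U(\sg_l)\big/\bigl(U(\sg_l)Z_*+U(\sg_l)U_\eta(\sn_l)\bigr).\eeqs
For the ``only if'' direction, if $V$ is a Whittaker module cyclically generated by a Whittaker vector $w$, then $V\cong U(\sg_l)/U(\sg_l)_w$, and the theorem above describing the annihilator $U(\sg_l)_w$ identifies $U(\sg_l)_w=U(\sg_l)Z(\sg_l)_V+U(\sg_l)U_\eta(\sn_l)$; comparing with the displayed presentation yields $V\cong M_{Z(\sg_l)_V}$.

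It remains to prove the uniqueness $Z_*=Z(\sg_l)_{M_{Z_*}}$ for every ideal $Z_*$, which applied to $V$ above also forces $Z_*=Z(\sg_l)_V$ in the statement. Since $M_{Z_*}$ is cyclic on the Whittaker vector $\bar1$, a central element $z$ kills $M_{Z_*}$ precisely when $z\in J:=U(\sg_l)Z_*+U(\sg_l)U_\eta(\sn_l)$, so it suffices to show $Z(\sg_l)\cap J=Z_*$; one inclusion is immediate. For the other I would apply $\rho_\eta$: it annihilates $U(\sg_l)U_\eta(\sn_l)$, restricts to the identity on $U(\sbo_l)$, and satisfies $(uz_0)^\eta=u^\eta z_0^\eta$ for $z_0$ central (the argument of Lemma~\ref{lemma-Z-W-ep}), whence $\rho_\eta(J)=U(\sbo_l)\,\rho_\eta(Z_*)=\widetilde{A}\otimes\rho_\eta(Z_*)$ by Theorem~\ref{U(b)-decom-A-W-homegeneous-theorem}; on the other hand $\rho_\eta(z)\in W(\sbo_l)=\widetilde{A_{(0)}}\otimes W(\sbo_l)$, and comparing the two inside the direct sum $\bigoplus_p\widetilde{A_{(p)}}\otimes W(\sbo_l)$ forces $\rho_\eta(z)\in\rho_\eta(Z_*)$, so $z\in Z_*$ by injectivity of $\rho_\eta$ on $Z(\sg_l)$. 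I expect the only real friction to be this last identification — correctly passing between the induced-module picture and the quotient-of-$U(\sg_l)$ picture, and tracking how $\rho_\eta$ interacts with left multiplication — but once the presentation $M_{Z_*}\cong U(\sg_l)/J$ is in place, both the characterization and the uniqueness follow in a few lines from results already in hand.
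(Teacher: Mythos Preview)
Your proposal is correct and follows exactly the standard Kostant route that the paper invokes: the paper does not spell out a proof for this statement (Subsection~5.2 explicitly says the proofs are ``just the duplication of those in \cite{Kos3}'' and lists the results without argument), and your reduction to the annihilator theorem $U(\sg_l)_w=U(\sg_l)Z(\sg_l)_V+U(\sg_l)U_\eta(\sn_l)$ together with the decomposition $U(\sbo_l)\cong\widetilde{A}\otimes W(\sbo_l)$ and the injectivity of $\rho_\eta$ on $Z(\sg_l)$ is precisely that approach. The uniqueness step via $\rho_\eta(J)=\widetilde{A}\otimes\rho_\eta(Z_*)$ and comparison in the direct sum $\bigoplus_p\widetilde{A_{(p)}}\otimes W(\sbo_l)$ (using $\widetilde{A_{(0)}}=\mc1$) is clean and correct.
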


\begin{theo}Let $V$ be any Whittaker module over $U(\sg_l)$ cyclically generated by a Whitaker vector $w$. Then $v\in V$ is a Whittakker vector if and only if $v=uw$ for some element $u\in Z(U(\sg_l))$.
\end{theo}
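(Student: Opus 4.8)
The plan is to treat the two implications separately; the forward one is immediate and the reverse one rests essentially on Lemma \ref{U-w(b)=A-times-W-V}.

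For the implication $(\Leftarrow)$, suppose $v=uw$ with $u\in Z(\sg_l)$. For every $x\in\sn_l$ one has $xv=xuw=uxw=u\eta(x)w=\eta(x)v$, using the centrality of $u$ and that $w$ is a Whittaker vector of type $\eta$; hence $v$ is a Whittaker vector, and this uses nothing beyond the definitions.

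For $(\Rightarrow)$, the first step is to note that $U(\sg_l)w=U(\sbo_l)w$: by \eqref{decomposition-U(g)} we have $U(\sg_l)=U(\sbo_l)\oplus U(\sg_l)U_\eta(\sn_l)$, and $U(\sg_l)U_\eta(\sn_l)$ annihilates $w$ because $U_\eta(\sn_l)w=0$. Thus the given Whittaker vector can be written $v=bw$ with $b\in U(\sbo_l)$. The second step rewrites the Whittaker condition in terms of the $\eta$-reduced action on $U(\sbo_l)$: for $x\in\sn_l$ and $b\in U(\sbo_l)$ (so $b=b^\eta$) one computes $(x\cdot b)w=(xb)^\eta w-\eta(x)bw=(xb)w-\eta(x)bw=(x-\eta(x))(bw)$, where the middle equality holds because $xb-(xb)^\eta\in U(\sg_l)U_\eta(\sn_l)$ annihilates $w$. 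Hence $v=bw$ is a Whittaker vector precisely when $b$ lies in the subspace $X$ of Lemma \ref{U-w(b)=A-times-W-V}.

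The final step invokes that lemma, which gives $X=(\widetilde{A}\otimes W(\sbo_l)_V)+W(\sbo_l)$ with $\widetilde{A}\otimes W(\sbo_l)_V=U(\sbo_l)\cap U(\sg_l)_w$. Since the first summand annihilates $w$, one may discard it and assume $b\in W(\sbo_l)=\rho_\eta(Z(\sg_l))$; writing $b=u^\eta$ with $u\in Z(\sg_l)$ and using once more that $u-u^\eta\in U(\sg_l)U_\eta(\sn_l)$ kills $w$, we conclude $v=bw=u^\eta w=uw$, as desired. I expect the only subtle point to be the identification in the second step of the $\eta$-reduced action $x\cdot b$ inside $U(\sbo_l)$ with the genuine operator $x-\eta(x)$ on $V$ transported along $b\mapsto bw$; but this is precisely the compatibility already exploited in the proof of Lemma \ref{U-w(b)=A-times-W-V}, so no new idea is needed, and the content of the theorem is carried entirely by that lemma together with the definition $W(\sbo_l)=\rho_\eta(Z(\sg_l))$.
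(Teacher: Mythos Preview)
Your proof is correct and follows the same approach as the paper, which in Subsection~5.2 explicitly omits the argument and defers to Kostant's original proof in \cite{Kos3}; that proof proceeds exactly as you do, reducing the Whittaker condition on $v=bw$ to membership of $b$ in the set $X$ of Lemma~\ref{U-w(b)=A-times-W-V} and then reading off $b\in W(\sbo_l)$ modulo the annihilator $\widetilde{A}\otimes W(\sbo_l)_V=U(\sbo_l)_w$. Your identification $(x\cdot b)w=(x-\eta(x))(bw)$ is just the relation $uw=u^\eta w$ applied to $u=xb$, so there is no gap.
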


\begin{theo}Assume that  $V$ is a  Whittaker module over $U(\sg_l)$. Then  one has an isomorphism
\beqs {\rm End}_{U(\sg_l)}V\cong Z(U(\sg_l))/Z(U(\sg_l))_V\eeqs
of commutative algebras.\end{theo}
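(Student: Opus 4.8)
The plan is to exploit the cyclic Whittaker generator together with the description of Whittaker vectors obtained just above. Fix a Whittaker vector $w$ of type $\eta$ with $V=U(\sg_l)w$, and write $Z(\sg_l)$ for the center of $U(\sg_l)$ (the algebra denoted $Z(U(\sg_l))$ in the statement) and $Z(\sg_l)_V=Z(\sg_l)\cap U(\sg_l)_V$. First I would observe that any $\phi\in{\rm End}_{U(\sg_l)}V$ is determined by the single value $\phi(w)$, since $\phi(uw)=u\,\phi(w)$ for all $u\in U(\sg_l)$; and that $\phi(w)$ is again a Whittaker vector of type $\eta$, because $x\,\phi(w)=\phi(xw)=\phi(\eta(x)w)=\eta(x)\phi(w)$ for every $x\in\sn_l$. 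Invoking the preceding theorem, which says every Whittaker vector of $V$ equals $uw$ for some $u\in Z(\sg_l)$, we may thus write $\phi(w)=uw$ for a suitable $u\in Z(\sg_l)$.

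This points to constructing the inverse direction as a map
\[
\Phi:\ Z(\sg_l)\longrightarrow{\rm End}_{U(\sg_l)}V,\qquad u\mapsto\mu_u,\quad \mu_u(v)=uv,
\]
where $\mu_u$ is indeed a $U(\sg_l)$-module endomorphism precisely because $u$ is central ($\mu_u(av)=uav=auv=a\,\mu_u(v)$). I would then check that $\Phi$ is an algebra homomorphism, namely $\mu_u\circ\mu_{u'}=\mu_{uu'}$; since $Z(\sg_l)$ is commutative, the image is commutative, and once surjectivity is established this shows ${\rm End}_{U(\sg_l)}V$ itself is commutative, matching the statement. Surjectivity is exactly the content of the first paragraph: given $\phi$, choose $u\in Z(\sg_l)$ with $\phi(w)=uw=\mu_u(w)$; then $\phi$ and $\mu_u$ agree on the generator $w$, hence on all of $V=U(\sg_l)w$.

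Finally I would compute $\ker\Phi$. One has $\mu_u=0$ iff $uV=0$, i.e.\ iff $u\in U(\sg_l)_V$; since also $u\in Z(\sg_l)$, this is equivalent to $u\in Z(\sg_l)\cap U(\sg_l)_V=Z(\sg_l)_V$ (and by centrality together with $V=U(\sg_l)w$ it is in fact already equivalent to $uw=0$). Hence $\ker\Phi=Z(\sg_l)_V$, and the first isomorphism theorem yields ${\rm End}_{U(\sg_l)}V\cong Z(\sg_l)/Z(\sg_l)_V$ as commutative algebras. The only step carrying genuine content is the surjectivity, which rests entirely on the classification of Whittaker vectors in the preceding theorem, itself built on Lemma \ref{U-w(b)=A-times-W-V}; everything else is formal manipulation with the cyclic vector and the centrality of $Z(\sg_l)$, so I do not anticipate a real obstacle.
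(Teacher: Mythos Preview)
Your argument is correct and is precisely the standard one: the paper omits the proof here, declaring it a duplication of Kostant's Theorem~3.5 in \cite{Kos3}, and what you have written is exactly that argument---use cyclicity to reduce an endomorphism to its value on $w$, identify $\phi(w)$ as a Whittaker vector, apply the preceding theorem to write it as $uw$ with $u\in Z(\sg_l)$, and compute the kernel of $u\mapsto\mu_u$. There is nothing to add.
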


\begin{theo}Let $V$ be a Whittaker module over $U(\sg_l)$ of nonsingular type $\eta$. The following conditions are equivalent:
\begin{itemize}
\item[(1)]{$V$ is irreducible.}
\item[(2)]{$V$ admits an infinitesimal character.}
\item[(3)]{The annihilator of $V$ in $Z(U(\sg_l))$ is a maximal ideal of $Z(U(\sg_l))$.}
\item[(4)]{The space of Whittaker vectors in $V$ is one-dimensional.}
\item[(5)]{All nonzero Whittaker vectors in $V$ are cyclically vectors.}
\item[(6)]{The centralizer ${\rm End}_{U(\sg_l)}V$ reduces to the constants $\mc$.}
\item[(7)]{$V$ is isomorphic to $Y_{\eta,\xi}$ for some central character $\xi$.}
\end{itemize}
\end{theo}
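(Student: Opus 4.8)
The plan is to reduce all seven conditions to the single arithmetic statement that $\frak{z}:=Z(\sg_l)\cap U(\sg_l)_V$ is a maximal ideal of $Z(\sg_l)$, which is condition $(3)$. Throughout I would use that $V\cong U(\sg_l)/U(\sg_l)_w$ with
\[U(\sg_l)_w=U(\sg_l)\frak{z}+U(\sg_l)U_\eta(\sn_l)\]
by the theorem immediately preceding, so that (up to the bijection already established) $V$ is the Whittaker module of type $\eta$ whose central annihilator is $\frak{z}$; and that $Z(\sg_l)=\mc[\widetilde{I_{i,j}}]$ is a finitely generated commutative $\mc$-algebra (indeed a polynomial algebra), so by the Nullstellensatz every maximal ideal of $Z(\sg_l)$ has residue field $\mc$. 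Hence, for an ideal $\frak{a}\subseteq Z(\sg_l)$, one has $\dim_\mc Z(\sg_l)/\frak{a}=1$ if and only if $\frak{a}$ is maximal, and $Z(\sg_l)/\frak{a}$ has no nonzero proper ideal if and only if it is a field if and only if $\frak{a}$ is maximal.

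First I would dispatch the equivalences not mentioning irreducibility. $(2)\Leftrightarrow(3)$ is the definition of "admits an infinitesimal character" recalled in the Introduction, since $Z(\sg_l)_V=\frak{z}$. For $(3)\Leftrightarrow(6)$: by the theorem giving ${\rm End}_{U(\sg_l)}V\cong Z(\sg_l)/\frak{z}$ as $\mc$-algebras, the centralizer reduces to $\mc$ exactly when $\frak{z}$ is maximal. For $(3)\Leftrightarrow(4)$: by the theorem describing the Whittaker vectors of $V$, these form the subspace $Z(\sg_l)w$, and for central $u$ one has $uw=0$ if and only if $u$ annihilates all of $V=U(\sg_l)w$, i.e. if and only if $u\in\frak{z}$; thus $u\mapsto uw$ induces an isomorphism of $Z(\sg_l)/\frak{z}$ onto the space of Whittaker vectors, which is one-dimensional exactly when $\frak{z}$ is maximal. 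For $(3)\Leftrightarrow(7)$: the module $Y_{\eta,\xi}=U(\sg_l)\otimes_{Z(\sg_l)\otimes U(\sn_l)}\mc_{\eta,\xi}$ is isomorphic to $U(\sg_l)/(U(\sg_l)\ker\xi+U(\sg_l)U_\eta(\sn_l))$, which by the decomposition $\widetilde{A}\otimes W(\sbo_l)\cong U(\sbo_l)$ of Theorem \ref{U(b)-decom-A-W-homegeneous-theorem} is a nonzero Whittaker module of type $\eta$ with central annihilator the maximal ideal $\ker\xi$; hence, by the bijection theorem above, $V\cong Y_{\eta,\xi}$ for some central character $\xi$ if and only if $\frak{z}=\ker\xi$ for some $\xi$ if and only if $\frak{z}$ is maximal.

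Next I would bring in $(1)$ and $(5)$. The submodules of $V$ correspond order-preservingly to the submodules of $W_\eta$ containing the image of $U(\sg_l)_w$, hence — via the submodule-to-ideal correspondence for $W_\eta$, which follows from the annihilator formula above together with $U(\sbo_l)\cong\widetilde{A}\otimes W(\sbo_l)$ — to the ideals of $Z(\sg_l)$ containing $\frak{z}$, that is, to the ideals of $Z(\sg_l)/\frak{z}$; the submodule attached to an ideal $J\supseteq\frak{z}$ is, moreover, generated by the Whittaker vectors $Jw$. Consequently $V$ is irreducible if and only if $Z(\sg_l)/\frak{z}$ has no nonzero proper ideal if and only if $\frak{z}$ is maximal, which gives $(1)\Leftrightarrow(3)$; as an independent check, $(1)\Rightarrow(6)$ also follows from Dixmier's form of Schur's lemma, since $V$ has at most countable dimension over the uncountable field $\mc$. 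Finally $(1)\Rightarrow(5)$ is immediate, and for $(5)\Rightarrow(1)$: a nonzero proper submodule $V'$ would correspond to some $J$ with $\frak{z}\subsetneq J\subsetneq Z(\sg_l)$, so $V'$ contains a nonzero Whittaker vector $jw$ with $j\in J\setminus\frak{z}$; under $(5)$ this vector is cyclic, forcing $V'=V$, a contradiction. This closes the cycle of equivalences.

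The only step that is not pure bookkeeping is the submodule-to-ideal correspondence used above: one must check that it descends from $W_\eta$ to the quotient $V$ and that every nonzero submodule of $V$ is generated by Whittaker vectors of the form $Jw$. Both rest on the freeness isomorphism $U(\sbo_l)\cong\widetilde{A}\otimes W(\sbo_l)$ of Theorem \ref{U(b)-decom-A-W-homegeneous-theorem} and on the $\eta$-reduced action of $\sn_l$ on $\widetilde{A}$, which is where nonsingularity of $\eta$ enters (through Lemma \ref{ei-star-U(b)}); once these are in hand the remaining implications are formal, exactly as in \cite{Kos3}.
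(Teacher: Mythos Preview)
Your proposal is correct and follows essentially the same approach as the paper, which in fact gives no explicit proof for this theorem: the author states at the start of \S5.2 that ``the proof for all main results is just the duplication of those in \cite{Kos3}'' and lists the statement without argument. Your chain of equivalences via condition $(3)$, drawing on the preceding four theorems of \S5.2 together with the freeness isomorphism $U(\sbo_l)\cong\widetilde{A}\otimes W(\sbo_l)$ and the Nullstellensatz, is exactly the Kostant argument the paper is invoking.
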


\section{The symplectic structure of $Z$ of the normalized Jacobi elements}


\subsection{}
From now on we assume that the ground field is the real number field $\mr$ except special emphasis. In this section one considers  the  $2n(l+1)$-dimensional manifold
\beq\label{Z}Z=\ssf+\sum_{1\leq i\leq n\atop 0\leq j\leq l}\mr h_i(j)+\sum_{1\leq i\leq n\atop 0\leq j\leq l}\mr^+e_{\alpha_i}(j).\eeq
As an analogue of that in \cite{Kos5} for real split semi-simple Lie algebras, it is referred to the manifold of  normalized Jacobi elements of $\sg_l$. For any $y\in Z$ the tangent space to $Z$ at $y$ is clearly given by
\beqs T_y(Z)=\sum_{1\leq i\leq n\atop 0\leq j\leq l}\mr h_i(j)+\sum_{1\leq i\leq n\atop 0\leq j\leq l}\mr e_{\alpha_i}(j).\eeqs

In particular we choose the root vectors $e_{\alpha}$ so that $\kappa(e_{\alpha},e_{\beta})=\delta_{\alpha,-\beta}$ and let $h_i=[e_{\alpha_i},e_{-\alpha_i}]$. Thus $\kappa(h_i,h_i)=2$ for any $i$ and this follows a unique bilinear form  on $\sh^*$ also denoted by $\kappa$ such that $\kappa(\alpha_i,\alpha_i)=2$. Noting that $(c_{i,j})$ is the Cartan matrix, then $\kappa(\alpha_i,\alpha_j)=c_{i,j}$ and $\kappa(h_i,\omega_j)=\delta_{i,j}$ for any pair $(i,j)$.


Let $G={\rm Aut}\sg$ be the adjoint group of $\sg$. Of course $\sg_l$ is real split since $\sg$ is real split simple. Also let $\sg=\sk+\sp$ be a Cartan decomposition of $\sg$ where $\sk$ is the Lie algebra of a maximal compact subgroup of $G$ and $\sp$ is the orthogonal complement to $\sk$ with respect ${\rm Re}\kappa$. Let $\theta$ be the corresponding Cartan involution of $\sg$ which acts as identity on $\sk$ and negative identity on $\sp$.  For any $x\in\sg$ let $x^*=-\theta(x)$. Now $\sg$ becomes to a Hilbert space with respect to the inner product $\kappa_*$ which is defined by $\kappa_*(x,y)=\kappa(x,y^*)$.

Now we define an involution of $\sg_l$ by
\beq\label{involution} x(j)\mapsto (x(j))^*=x^*(l-j)\eeq
for any $x\in\sg, 0\leq j\leq l$. Of course  one defines an inner product $Q_*$ on $\sg_l$ by putting
\beq\label{inner-product}Q_*(x,y)=Q(x,y^*).\eeq
Clearly $\sg_l$ is a Hilbert space with respect to $Q_*$. One notes that the positive definiteness of $Q_*$ follows from that of $\kappa_*$ and  $Q_*(x,y)=\overline{Q_*(y,x)}$ follows from $\kappa_*(x,y)=\overline{\kappa_*(y,x)}$, where $^-$ denote the conjugation.

\begin{remark}It should be pointed out that most results in \cite{Kos5} are proved under assumption that the Lie algebra is real split semi-simple (or complex semi-simple). However the proof process only requires that the Lie algebra has a non-degenerated invariant bilinear form which induces a structure of a Hilbert space. Thus the corresponding results are true for $\sg_l$ here.
\end{remark}

For any subspace $\sa\subseteq\sg$ let $\sa^*=\{x\in\sg_l:x\in\sa\}, \sa^\bot=\{x\in\sg_l:Q_*(\sa,x)=0\}$ and $\sa^0=\{x\in\sg_l:Q(\sa,x)=0\}$.
The three maps $\sa\rightarrow \sa^*,\sa^\bot,\sa^0$ commute with any another and any one is the composite of the other two ones. In particular, one has
\beqs (\sbo_l)^*=\sb_l, (\sbo_l)^\bot=\sn_l, (\sbo_l)^0=\overline{\sn}_l.\eeqs

\subsection{}
%
%
The symmetric algebra $S(\sg_l)$ can be regarded as the algebra of  polynomial functions on $\sg_l$. For any $v\in S(\sg_l)$ let $\delta v$ denote the differential of $v$ when $v$ is regarded as a function on $\sg_l$.

\begin{prop}[see Proposition 1.2.1 in \cite{Kos5}] The Possion structure of $S(\sg_l)$ is given by
\beqs [u,v](z)=Q(z,[(\delta u)(z),(\delta v)(z)])\eeqs
for any $u,v\in S(\sg_l)$ and $z\in\sg_l$.
\end{prop}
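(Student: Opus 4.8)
The plan is to derive the formula directly from the definition of the Poisson bracket on $S(\sg_l)$, which comes from the Lie bracket on $\sg_l$ together with the identification $\sg_l\cong\sg_l^*$ via the non-degenerate invariant form $Q$. First I would recall that $S(\sg_l)$, as the algebra of polynomial functions on $\sg_l^*\cong\sg_l$, carries the Lie--Poisson (Kirillov--Kostant) structure: on linear functions, which are identified with elements of $\sg_l$, the bracket is just the Lie bracket, i.e. for $x,y\in\sg_l$ regarded as linear functions on $\sg_l$ one has $[Q_x,Q_y]=Q_{[x,y]}$; and this extends uniquely to all of $S(\sg_l)$ as a biderivation satisfying the Jacobi identity. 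The task is then to check that the proposed closed formula $[u,v](z)=Q(z,[(\delta u)(z),(\delta v)(z)])$ agrees with this structure.

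The key steps, in order, are: (1) verify the formula on linear functions, where $\delta u$ is constant equal to the corresponding element of $\sg_l$, so the right-hand side reduces to $Q(z,[x,y])=Q_{[x,y]}(z)$, matching the Lie--Poisson bracket on generators; (2) check that the right-hand side, as a function of $u$ (with $v$ fixed), is a derivation of $S(\sg_l)$ — this follows from the Leibniz rule for the differential, $\delta(u_1u_2)=u_1\,\delta u_2+u_2\,\delta u_1$ at each point $z$, and bilinearity of the commutator bracket inside $Q(z,[-,-])$; (3) by symmetry (antisymmetry of $[-,-]$ and of $Q(z,[-,-])$ in its two slot-arguments) the formula is also a derivation in $v$; (4) since $S(\sg_l)$ is generated by linear functions and both $[-,-]$ and the expression $Q(z,[(\delta -)(z),(\delta -)(z)])$ are biderivations agreeing on generators, they coincide on all of $S(\sg_l)$. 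One should also note antisymmetry of the formula is manifest from antisymmetry of the Lie bracket, so it does define a candidate Poisson bracket.

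The main obstacle, such as it is, is really bookkeeping rather than a genuine difficulty: one must be careful that $\delta u$ at a point $z\in\sg_l$ is naturally an element of $\sg_l^*$, and that we are silently using $Q$ to move it back into $\sg_l$ so that the commutator $[(\delta u)(z),(\delta v)(z)]$ makes sense — this is exactly the identification $\sg_l\cong\sg_l^*$ fixed earlier in the paper, so it is consistent, but the statement as written compresses this. A secondary point worth one line is the Jacobi identity: either cite that the Lie--Poisson bracket is known to satisfy it (so nothing new is needed once agreement with the Lie--Poisson bracket on generators is established), or remark that it is inherited from the Jacobi identity in $\sg_l$. Since the proposition is explicitly quoted as the analogue of Proposition 1.2.1 in \cite{Kos5}, the cleanest writeup is to say the proof is identical to Kostant's, the only input being that $\sg_l$ has the non-degenerate invariant form $Q$, and then carry out steps (1)--(4) above in a few lines.
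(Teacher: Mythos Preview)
Your proposal is correct and is in fact more than the paper itself supplies: the paper does not give a proof of this proposition but simply records it with a reference to Kostant's Proposition~1.2.1 in \cite{Kos5}, relying on the fact that $\sg_l$ carries the non-degenerate invariant form $Q$. Your outline --- verifying the formula on linear functions via $[Q_x,Q_y]=Q_{[x,y]}$ and then extending by the biderivation property, with the $Q$-identification $\sg_l\cong\sg_l^*$ making $(\delta u)(z)$ an element of $\sg_l$ --- is exactly the standard Lie--Poisson argument, and your closing remark that one can simply cite Kostant matches the paper's treatment precisely.
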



%
\begin{theo}[see Theorem 1.5 in \cite{Kos5}]\label{Possion-commute}
The elements $I^\ssf, J^\ssf$ in $S(\sbo_l)$ Poisson commute for any pair of invariants $I,J\in S(\sg_l)$.\end{theo}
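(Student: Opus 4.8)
The plan is to prove that $I^{\ssf}$ and $J^{\ssf}$ Poisson commute in $S(\sbo_l)$ by relating the Poisson bracket on $S(\sbo_l)$ to that on $S(\sg_l)$ and exploiting that $I,J$ are $G_l$-invariant. First I would recall the Poisson formula from the preceding proposition: for $u,v\in S(\sg_l)$ and $z\in\sg_l$, $[u,v](z)=Q(z,[(\delta u)(z),(\delta v)(z)])$. Applied to a $G_l$-invariant $I$, infinitesimal invariance says $Q(z,[(\delta I)(z),w])=0$ for all $w\in\sg_l$, i.e. $[z,(\delta I)(z)]=0$ in $\sg_l$ (using that $Q$ is nondegenerate and invariant). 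Hence any two invariants Poisson commute in $S(\sg_l)$: $[I,J](z)=Q(z,[(\delta I)(z),(\delta J)(z)])=-Q((\delta I)(z),[z,(\delta J)(z)])=0$. So the content of the theorem is that the restriction map $\rho_{\ssf}:u\mapsto u^{\ssf}$, $u^{\ssf}(x)=u(\ssf+x)$ for $x\in\sbo_l$, carries the Poisson bracket on $S(\sg_l)^{G_l}$ into the Poisson bracket on $S(\sbo_l)$ compatibly, at least on the image of invariants.

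The key step is to identify $(\delta I^{\ssf})(x)$ for $x\in\sbo_l$. Writing the differential of $I^{\ssf}$ as a function on $\sbo_l$, for $y\in\sbo_l$ one has $\langle (\delta I^{\ssf})(x),y\rangle = \frac{d}{dt}\big|_{t=0} I(\ssf+x+ty) = \langle(\delta I)(\ssf+x),y\rangle$. Under the identification of $\sg_l$ with $\sg_l^*$ via $Q$, and noting $(\sbo_l)^{0}=\sno_l$ so that $\sbo_l^* \cong \sg_l/\sno_l$, this means $(\delta I^{\ssf})(x)$ is the image of $(\delta I)(\ssf+x)\in\sg_l$ under the projection $P:\sg_l\to\sbo_l$ with kernel $\sn_l$ — here I would use that the Poisson structure on $S(\sbo_l)$ viewed as functions on $\sb_l$ is the restriction of that on $S(\sg_l)$, as asserted in Subsection 1.4 of the introduction, so the bracket on $S(\sbo_l)$ reads $[u^{\ssf},v^{\ssf}](x)=Q\big(\ssf+x,[P(\delta I)(\ssf+x),P(\delta J)(\ssf+x)]\big)$ for $x\in\sbo_l$, where the $\ssf$-shift appears because $x\in\sbo_l$ corresponds to the functional $Q(\ssf+x,-)$ restricted appropriately — this is exactly the translated coadjoint action of $N_l$ studied in Section 3. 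The point is then that $P(\delta I)(\ssf+x) \equiv (\delta I)(\ssf+x) \pmod{\sn_l}$, and since $Q(\ssf+x,-)$ pairs $\sn_l$ against $\sno_l+\sh_l$... I need to be careful: $\ssf+x$ for $x\in\sbo_l$ lands in $\ssf+\sbo_l$, and I must check that replacing $(\delta I)(\ssf+x)$ by its $P$-projection does not change the value $Q(\ssf+x,[\cdot,\cdot])$. Using $[\ssf+x,(\delta I)(\ssf+x)]=0$ by invariance of $I$ evaluated at the point $\ssf+x\in\sg_l$, the bracket $[I^{\ssf},J^{\ssf}](x)=Q(\ssf+x,[P(\delta I)(\ssf+x),P(\delta J)(\ssf+x)])$ should reduce, modulo the correction terms coming from the $\sn_l$-components killed by $P$, to $Q(\ssf+x,[(\delta I)(\ssf+x),(\delta J)(\ssf+x)])=[I,J](\ssf+x)=0$.

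The main obstacle will be the bookkeeping of the correction terms: $P(\delta I)(\ssf+x)$ differs from $(\delta I)(\ssf+x)$ by an element of $\sn_l$, and I must show the resulting discrepancy in $Q(\ssf+x,[\cdot,\cdot])$ vanishes. For this I would use that $[\ssf+x,(\delta I)(\ssf+x)]=0$ together with $Q$-orthogonality relations $(\sn_l)^{0}=? $ — more precisely the Toda-type cancellation exploited in \cite{Kos5}: the shift by $\ssf$ and the projection $P$ are precisely arranged so that the Hamiltonian vector fields of $I^{\ssf}$ and $J^{\ssf}$ on $\sb_l$ are tangent to the $N_l$-orbits and the brackets pull back correctly (this is the content of Lemma \ref{invar-iso-1} and Theorem \ref{invar-iso-2}, which say $I^{\ssf}\in S(\sbo_l)^{N_l}$). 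An efficient route may be to avoid explicit correction terms altogether: since $I^{\ssf}, J^{\ssf}\in S(\sbo_l)^{N_l}$ by Lemma \ref{invar-iso-1}, their Hamiltonian vector fields vanish along $\sn_l$, so one can compute $[I^{\ssf},J^{\ssf}]$ by restricting to the slice $\sss_l$ and there identify it, via Theorem \ref{invar-iso-2}, with the image of $[I,J]=0$. I expect the cleanest writeup combines the formula for $\delta I^{\ssf}$ above with the identity $[\ssf+x,(\delta I)(\ssf+x)]=0$ and a single orthogonality observation to conclude.
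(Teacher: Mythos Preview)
The paper does not actually prove this theorem: it simply records the statement with a citation to Kostant's Theorem~1.5 in \cite{Kos5} and moves on. So there is no in-paper argument to compare against; what you are doing is reconstructing Kostant's proof in the $\sg_l$ setting.

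Your first approach (expand the correction terms) is the right one and does go through, but your write-up has two confusions that would need to be fixed. First, $I^{\ssf}\in S(\sbo_l)$ is a function on $\sb_l$, not on $\sbo_l$: throughout you should have $x\in\sb_l$. Second, the Poisson bracket is
\[
[I^{\ssf},J^{\ssf}](x)=Q\big(x,\,[P_{\sbo_l}(\delta I)(\ssf+x),\,P_{\sbo_l}(\delta J)(\ssf+x)]\big),\qquad x\in\sb_l,
\]
with no $\ssf$ in the first slot; your formula with $\ssf+x$ happens to be correct only because $\ssf\in\sno_l=(\sbo_l)^0$, not because of the translated action. With $a=(\delta I)(\ssf+x)$, $b=(\delta J)(\ssf+x)$, $n_I=a-P_{\sbo_l}a\in\sn_l$, $n_J=b-P_{\sbo_l}b\in\sn_l$, and using $[\ssf+x,a]=[\ssf+x,b]=0$, invariance of $Q$ gives $Q(\ssf+x,[a,b])=Q(\ssf+x,[a,n_J])=Q(\ssf+x,[n_I,b])=0$; and $Q(\ssf+x,[n_I,n_J])=0$ because $[n_I,n_J]\in[\sn_l,\sn_l]\subseteq\bigoplus_{k\ge2}(\sg_{[k]})_l$, which is $Q$-orthogonal to both $\ssf\in(\sg_{[-1]})_l$ and $x\in\sb_l$. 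Hence $Q(\ssf+x,[P_{\sbo_l}a,P_{\sbo_l}b])=0$, and since $Q(\ssf,\sbo_l)=0$ this gives $[I^{\ssf},J^{\ssf}](x)=0$. That is the whole proof; your ``efficient route'' via $N_l$-invariance and restriction to $\sss_l$ is not correct as stated (being in $S(\sbo_l)^{N_l}$ for the \emph{translated} $N_l$-action says nothing direct about the Hamiltonian vector field for the $\sbo_l$-Poisson structure) and should be dropped.
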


Let $P_{\sbo_l}:\sg_l\rightarrow \sbo_l$ be the orthogonal projection of $\sg_l$ on $\sbo_l$ with respect to the inner product $Q_*$ and let $\delta_{\sbo_l}$ be map $\sg_l\rightarrow\sbo_l$ defined by
\beqs (\delta_{\sbo_l}u)(x)=P_{\sbo_l}((\delta u)(x)).\eeqs
\begin{prop}[see Proposition 1.2 in \cite{Kos5}]$\delta_{\sbo_l}$ defines a Poisson structure of $S(\sbo_l)$ which coincides with the usual Possion structure of $S(\sbo_l)$ defined by the Lie algebra structure of $\sbo_l$.
\end{prop}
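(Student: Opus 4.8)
The plan is to recognize the bracket determined by $\delta_{\sbo_l}$ as the Lie--Poisson (Kirillov--Kostant--Souriau) bracket on the dual of the Lie algebra $\sbo_l$, once the identifications built into the definition of $\delta_{\sbo_l}$ are unwound. First I would record the linear-algebra facts that make the definition meaningful. Since $(\sbo_l)^0=\sno_l$ and $\sb_l\cap\sno_l=0$ while $\dim\sbo_l=\dim\sb_l$, the form $Q$ restricts to a perfect pairing $\sbo_l\times\sb_l\to\mr$, so $\sb_l$ is identified with $\sbo_l^*$ and $S(\sbo_l)$ is exactly the algebra of polynomial functions on $\sb_l=\sbo_l^*$. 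Next, for $u\in S(\sbo_l)$ the ordinary differential $(\delta u)(x)$ --- taken, as in the preceding propositions, through $\sg_l\cong\sg_l^*$ via $Q$ --- already lies in $\sbo_l$: writing $u$ as a polynomial in the linear functions $z\mapsto Q(\xi,z)$ with $\xi\in\sbo_l$, each of these kills $\sno_l$ because $Q(\sbo_l,\sno_l)=0$, so $u$ factors through $\sg_l\to\sg_l/\sno_l$ and hence $(\delta u)(x)\in(\sno_l)^0=\sbo_l$. As $(\sbo_l)^\bot=\sn_l$, the projector $P_{\sbo_l}$ is the identity on $\sbo_l$, whence $\delta_{\sbo_l}u=\delta u$ for every $u\in S(\sbo_l)$; put differently, $\delta_{\sbo_l}u$ is simply the $\sbo_l$-valued gradient of the function $u\colon\sb_l\to\mr$ relative to that pairing.

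Granting this, the operation $[u,v](x)=Q(x,[(\delta_{\sbo_l}u)(x),(\delta_{\sbo_l}v)(x)])$ is verbatim the Kirillov--Kostant--Souriau bracket on $\sbo_l^*$: the inner bracket is the bracket of $\sg_l$ restricted to $\sbo_l$, which equals the bracket of the Lie subalgebra $\sbo_l$ and so stays inside $\sbo_l$, and pairing the result against $x\in\sb_l=\sbo_l^*$ by $Q$ yields a polynomial function on $\sb_l$, i.e.\ an element of $S(\sbo_l)$. Equivalently --- and this is how I would frame it --- the preceding proposition exhibits the Kirillov--Kostant--Souriau structure on $S(\sg_l)$; since $\sbo_l$ is a Lie subalgebra of $\sg_l$, its symmetric algebra $S(\sbo_l)$ is a Poisson subalgebra and the induced bracket is precisely the Lie--Poisson bracket of $\sbo_l$, while the identity $\delta_{\sbo_l}u=\delta u$ on $S(\sbo_l)$ shows that the $\delta_{\sbo_l}$-formula computes exactly this induced bracket. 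The remaining checks are routine: the formula is antisymmetric by antisymmetry of $[\cdot,\cdot]$; it is a biderivation in $(u,v)$, by the Leibniz rule for $\delta$ together with the linearity of $P_{\sbo_l}$; and on linear functions $\xi,\eta\in\sbo_l\subseteq S_1(\sbo_l)$ it returns the function $Q(\cdot,[\xi,\eta])$, which under $\sb_l\cong\sbo_l^*$ is the element $[\xi,\eta]\in\sbo_l$ itself, i.e.\ the Lie--Poisson bracket of $\xi$ and $\eta$.

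Since a biderivation of the polynomial algebra $S(\sbo_l)$ is determined by its restriction to $\sbo_l$, the bilinear operation defined by $\delta_{\sbo_l}$ then coincides with the usual Lie--Poisson bracket of $S(\sbo_l)$ attached to the Lie algebra $\sbo_l$; in particular it inherits the Jacobi identity, hence does define a Poisson structure, and that structure is the usual one --- which gives both assertions. The one delicate point, as I see it, is the bookkeeping of the three identifications in play --- $\sg_l\cong\sg_l^*$ and $\sb_l\cong\sbo_l^*$, both through $Q$, and the $Q_*$-orthogonal splitting $\sg_l=\sbo_l\oplus\sn_l$ realized by $P_{\sbo_l}$ --- and checking that they are mutually consistent, so that $\delta_{\sbo_l}u$ is genuinely the intrinsic gradient and the bracket really outputs a polynomial on $\sb_l$; this rests on the relations $(\sbo_l)^*=\sb_l$, $(\sbo_l)^\bot=\sn_l$, $(\sbo_l)^0=\sno_l$ noted earlier, on $\sbo_l$ being a subalgebra, and on the invariance of $Q$. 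I would deliberately avoid verifying the Jacobi identity directly from the $\delta_{\sbo_l}$-formula, which would be the genuinely laborious route; matching with the known Lie--Poisson structure sidesteps it.
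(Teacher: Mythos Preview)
The paper does not prove this proposition; it simply cites Kostant \cite{Kos5}, Proposition~1.2, and moves on. Your argument is correct and supplies what the paper omits: the key observation that for $u\in S(\sbo_l)$ the gradient $(\delta u)(x)$ already lies in $\sbo_l$ (so $P_{\sbo_l}$ acts as the identity there) reduces the $\delta_{\sbo_l}$-formula to the ambient Lie--Poisson bracket restricted to $S(\sbo_l)$, which on linear generators $\xi,\eta\in\sbo_l$ returns $[\xi,\eta]\in\sbo_l$ and hence, by the biderivation property, coincides with the intrinsic Lie--Poisson structure of $\sbo_l$.
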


One considers the map
\beq\label{variety-translation}\tau_\ssf:\sb_l\rightarrow\ssf+\sb_l,\;x\mapsto \ssf+x,\eeq
which is an isomorphism of affine varieties.

\begin{lemma}[see Lemma 1.6.1 in \cite{Kos5}]\label{I-y-delta}For any $I\in S(\sg_l)^{G_l}$ and $y\in\ssf+\sb_l$ one has
\beqs [y,(\delta_{\sbo_l}I)(y)]\in\sb_l.\eeqs
\end{lemma}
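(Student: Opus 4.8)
The plan is to use the single most important infinitesimal consequence of $G_l$-invariance — that the gradient of an invariant at a point commutes with that point — together with the identification $(\sbo_l)^\bot=\sn_l$ and the $\ssx$-grading.

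First I would record the invariance identity: for $I\in S(\sg_l)^{G_l}$ and any $z\in\sg_l$ one has $[z,(\delta I)(z)]=0$. Indeed, differentiating $I(\exp(t\,\ad\zeta)z)=I(z)$ at $t=0$ gives $Q((\delta I)(z),[\zeta,z])=0$ for all $\zeta\in\sg_l$; by the invariance and non-degeneracy of $Q$ this rewrites as $Q(\zeta,[z,(\delta I)(z)])=0$ for all $\zeta$, hence $[z,(\delta I)(z)]=0$.

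Now fix $y\in\ssf+\sb_l$ and write $y=\ssf+x$ with $x\in\sb_l$. Since $P_{\sbo_l}$ is the $Q_*$-orthogonal projection and $(\sbo_l)^\bot=\sn_l$, I can decompose $(\delta I)(y)=b+u$ with $b=P_{\sbo_l}((\delta I)(y))=(\delta_{\sbo_l}I)(y)\in\sbo_l$ and $u=(\delta I)(y)-b\in\sn_l$. By the identity above, $0=[y,b+u]$, so $[y,b]=-[y,u]$, and it suffices to show $[y,u]\in\sb_l$. But $[y,u]=[\ssf,u]+[x,u]$, where $[x,u]\in[\sb_l,\sn_l]\subseteq\sn_l\subseteq\sb_l$, while $[\ssf,u]\in[\ssf,\sn_l]\subseteq\sb_l$ — the last inclusion holding by the $\ssx$-grading, since $\ssf\in{\sg_{[-1]}}_l$ and $\sn_l\subseteq\bigoplus_{i\geq1}{\sg_{[i]}}_l$, so that $[\ssf,\sn_l]\subseteq\bigoplus_{i\geq0}{\sg_{[i]}}_l=\sb_l$. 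Hence $[y,u]\in\sb_l$, and therefore $[y,(\delta_{\sbo_l}I)(y)]=[y,b]=-[y,u]\in\sb_l$.

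The argument is short, and the one point that genuinely has to be got right is the orthogonality bookkeeping: the whole proof hinges on the remainder $u=(\delta I)(y)-(\delta_{\sbo_l}I)(y)$ lying precisely in $\sn_l$, which is exactly the statement that $P_{\sbo_l}$ is the $Q_*$-orthogonal projection (so that $\ker P_{\sbo_l}=(\sbo_l)^\bot=\sn_l$) rather than some arbitrary projection onto $\sbo_l$. Once that is in place, the remaining work is just the two-term grading computation for $[y,u]$, which presents no real obstacle.
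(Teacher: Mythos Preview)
Your argument is correct. The paper does not supply its own proof of this lemma but simply cites Kostant's Lemma~1.6.1 in \cite{Kos5}, and what you have written is precisely Kostant's argument transported to the Takiff setting: the identity $[z,(\delta I)(z)]=0$ for invariants, the decomposition $(\delta I)(y)=b+u$ with $u\in(\sbo_l)^\bot=\sn_l$, and then the grading check $[\ssf,\sn_l]\subseteq\sb_l$ together with $[\sb_l,\sn_l]\subseteq\sn_l$ to handle $[y,u]$.
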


For any $y\in\ssf+\sb_l$, the tangent space to $\ssf+\sb_l$ at $y$ is clearly $\sb_l$. Then for any $I\in S(\sg_l)^{G_l}$, the map $y\mapsto [y,(\delta_{\sbo_l}I)(y)]$ is a polynomial map on $\sg_l$. By Lemma \ref{I-y-delta} there exists a smooth vector field $\chi_I$ on $\ssf+\sb_l$ such that
\beq\label{vector-field-sb}(\chi_I)_y=[y,(\delta_{\sbo_l}I)(y)],\;\forall y\in\ssf+\sb_l.\eeq

Applying the vector field $\tau_\ssf^{-1}\chi_I$ to $u\in S(\sbo_l)$ one has
\beqs ((\tau_\ssf^{-1}\chi_I)u)(x)=[I^\ssf,u](x)\eeqs
for any $x\in\sb_l$.
\begin{theo}[see Theorem 1.6.1, Theorem 1.6.2 in \cite{Kos5}]\label{commutativity-invariants}For any $I,J\in S(\sg_l)^{G_l}$ one has $\chi_IJ=0$ on the variety $\ssf+\sb_l$ and $[\chi_I,\chi_J]=0$. \end{theo}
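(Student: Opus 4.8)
The plan is to move the whole question from the affine subvariety $\ssf+\sb_l$ to the vector space $\sb_l$ through the isomorphism $\tau_\ssf$ of \eqref{variety-translation}, where (as recorded just above the statement) the vector field $\tau_\ssf^{-1}\chi_I$ acts on the algebra $S(\sbo_l)$ of polynomial functions on $\sb_l$ as the derivation $u\mapsto[I^\ssf,u]$, i.e.\ as the Hamiltonian vector field of $I^\ssf$ for the Poisson bracket of $S(\sbo_l)$. Once this is in hand, both assertions reduce to the Poisson commutativity $[I^\ssf,J^\ssf]=0$ already established in Theorem \ref{Possion-commute}.

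For the first assertion $\chi_I J=0$ on $\ssf+\sb_l$, I would first note that the pullback of the polynomial function $J$ under $\tau_\ssf$ is exactly $J^\ssf$, since $(\tau_\ssf^*J)(x)=J(\ssf+x)=J^\ssf(x)$ for $x\in\sb_l$. Applying the identity above with $u=J^\ssf$ then gives $(\chi_I J)\circ\tau_\ssf=[I^\ssf,J^\ssf]$, which vanishes identically by Theorem \ref{Possion-commute}; since $\tau_\ssf$ is an isomorphism of affine varieties this yields $\chi_I J=0$ on $\ssf+\sb_l$.

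For the second assertion I would argue that $\chi_I$ and $\chi_J$ are polynomial (in particular smooth) vector fields on the affine space $\ssf+\sb_l$ by Lemma \ref{I-y-delta}, so their bracket is again polynomial, and a polynomial vector field on an affine space is completely determined by the derivation it induces on the coordinate algebra $S(\sbo_l)$ — no localization or bump functions are needed, since the linear coordinates on $\sb_l$ already belong to $S(\sbo_l)$ and their differentials span each cotangent space. It therefore suffices to check that the commutator of the derivations $u\mapsto[I^\ssf,u]$ and $u\mapsto[J^\ssf,u]$ of $S(\sbo_l)$ vanishes; by the Jacobi identity for the Poisson bracket of $S(\sbo_l)$ this commutator is the derivation $u\mapsto[[I^\ssf,J^\ssf],u]$, which is zero because $[I^\ssf,J^\ssf]=0$ by Theorem \ref{Possion-commute}. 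Transporting back along $\tau_\ssf$ then gives $[\chi_I,\chi_J]=0$.

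I expect the only genuinely delicate point to be this passage from an identity of vector fields on the manifold $\ssf+\sb_l$ to an identity of derivations of $S(\sbo_l)$, together with keeping the reality conventions of Section~6 consistent (the projection $\delta_{\sbo_l}$ and the inner product $Q_*$ being defined over $\mr$). Everything else is the standard formalism that in a Poisson algebra $[X_f,X_g]=X_{\{f,g\}}$, here specialized to $f=I^\ssf$, $g=J^\ssf$ with $\{f,g\}=0$, so the computation itself should be short.
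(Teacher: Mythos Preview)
Your argument is correct and is precisely the one the paper has set up: the paper does not supply its own proof here but cites Kostant, having just recorded the key identity $((\tau_\ssf^{-1}\chi_I)u)(x)=[I^\ssf,u](x)$ and Theorem~\ref{Possion-commute}, which together with the Jacobi identity for the Poisson bracket of $S(\sbo_l)$ give both assertions exactly as you outline. Your remark that polynomial vector fields on the affine space $\ssf+\sb_l$ are determined by their action on $S(\sbo_l)$ is the right way to make the passage rigorous, and nothing further is needed.
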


\subsection{} In this subsection we temporarily assume that $\sg_l$ is a complex Lie algebra. Let $G(0)\subseteq G_l$ be the Lie subgroup corresponding to $\sg(0)$ and $G[1]\subseteq G_l$ be the Lie subgroup corresponding to $\sg[1]$. Clearly $G[1]$ is nilpotent.
Also let $N(0), H(0)$ be the Lie subgroups corresponding to $\sn(0)$ and $\sh(0)$, respectively. Let $W$ be the Weyl group of $\sg(0)$ with the unique longest element $\omega_L$. Then the Bruhat-Gelfand decomposition of $G(0)$ asserts that
\beqs G(0)=\bigcup_{\sigma\in W}N(0)s(\sigma)H(0)N(0),\eeqs
where $s(\sigma)\in G(0)$ is the unique element in the normalizer of $\sh(0)$, corresponding to $\sigma$, such that
\beqs s(\sigma)e_{\alpha_i}(0)=e_{\sigma(\alpha_i}(0).\eeqs
Noting that $s(\omega_L)N(0)s(\omega_L)^{-1}=\overline{N}(0)$ is the subgroup corresponding to $\overline{\sn}(0)$. One knows that
\beqs G(0)_*=\overline{N}(0)H(0)N(0)\eeqs
is a Zariski open subset of $G(0)$ and the map
\beqs \overline{N}(0)\times H(0)\times N(0)\rightarrow G(0)_*, \;(x,h,y)\mapsto xhy\eeqs
is an isomorphism of algebraic varieties. The set $\overline{N}(0)H(0)N(0)$ is also called the big Bruhat cell of $G(0)$.

Now clearly  $G_l$ has a decomposition
\beqs G(0)=\bigcup_{\sigma\in W}N(0)s(\sigma)H(0)N(0)G[1],\eeqs
and for any $j$ one has
\beqs s(\sigma)e_{\alpha_i}(j)=e_{\sigma(\alpha_i)}(j).\eeqs
Let $(G_l)_*=G(0)_*G[1]$ then one has
\beqs (G_l)_*=\overline{N}_lH_lN_l,\eeqs
which is a Zariski open subset of $G_l$. Where $\overline{N}_l, H_l, N_l$ are the Lie subgroups corresponding to $\overline{\sn}_l,\sh_l,\sn_l$, respectively. In particular, the map
\beqs \overline{N}_l\times H_l\times N_l\rightarrow (G_l)_*, \;(x,h,y)\mapsto xhy\eeqs
is also an isomorphism of algebraic varieties.

For any
\beqs y=\ssf+\sum_{i,j}a_{i,j}h_{i}(j)+\sum_{i,j}b_{i,j}e_{\alpha_i}(j)\in Z,\eeqs
assume $y=\sum_{j=0}^ly_j(j)$ such that $y_j(j)\in\sg(j)$. Clearly $y_0(0)$ is regular in $\sg(0)$ then there exits $g\in G(0)$ such that
$gy\in h+\sg[1]$ for some regular element $h\in\sh(0)$ of $\sg(0)$. Then it is easy to see that there exists $g'\in G[1]$ such that $g'gy\in h+\sh[1]$.

For any $x\in\sg_l$ let $(G_l)^x$ denote the centralizer of $x$ in $G_l$. Then one has
\beqs (G_l)^y\cong (G_l)^{g'gy}=H_l,\eeqs
which is an abelian connected algebraic group of complex dimension $n(l+1)$.

\begin{prop}\label{abelian-algebraic-group}As algebraic groups one has the isomorphism
\beqs (G_l)^y\cong (\mc^*)^p\times(\mc^*)^q\eeqs
for any $y\in Z$ and some $p,q$ (dependent on $y$) where $p+q=n(l+1)$.\end{prop}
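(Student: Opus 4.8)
The plan is to deduce the structure of $(G_l)^y$ from the normal form constructed just above the statement. Since every $y\in Z$ is $G_l$-conjugate to an element of the form $h+h'$ with $h\in\sh(0)$ regular in $\sg(0)\cong\sg$ and $h'\in\sh[1]$, one has $(G_l)^y\cong(G_l)^{h+h'}$, so it suffices to analyse the centralizer of $h+h'$. First I would pin down its Lie algebra. Writing $\sg_l=\bigoplus_{j=0}^{l}\sg(j)$ for the $t$-grading, the derivation $\ad h$ preserves this grading and, by regularity of $h$, restricts on each $\sg(j)$ to an operator with kernel $\sh(j)$ that is invertible on the root-space complement of $\sh(j)$; on the other hand $\ad h'$ strictly raises the $t$-degree. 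Writing $z=\sum_{j}z_j(j)$ and imposing $[z,h+h']=0$, a straightforward induction on the $t$-degree forces $z\in\sh_l$ (once $z_0,\dots,z_{m-1}\in\sh$ the contribution of $[z,h']$ in degree $m$ vanishes, being a bracket of elements of $\sh$). Hence the centralizer of $h+h'$ in $\sg_l$ is precisely the abelian subalgebra $\sh_l$, of dimension $n(l+1)$, and $H_l=\exp(\ad\sh_l)$ is the identity component of $(G_l)^{h+h'}$.

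The next step is to show that $(G_l)^{h+h'}$ is connected and to exhibit it as a direct product. Using the semidirect decomposition $G_l=G(0)\ltimes G[1]$, project $(G_l)^{h+h'}$ onto $G(0)$: the image centralizes $h$, and since $h$ is regular and the Weyl group $W$ acts freely on the regular elements of $\sh(0)$, the centralizer of $h$ in $G(0)$ is the connected maximal torus $H(0)$; as $G(0)$ is the adjoint group of the simple rank-$n$ algebra $\sg(0)\cong\sg$, we have $H(0)\cong(\mc^*)^n$. The kernel of this projection restricted to $(G_l)^{h+h'}$ is a closed subgroup of the unipotent group $G[1]$, hence a connected unipotent group, and by the Lie-algebra computation above it is $H[1]:=\exp(\ad\sh[1])$, the exponential of the abelian nilpotent ideal $\sh[1]=\sh\otimes t\mc[t]/(t^{l+1})$ of dimension $nl$; thus $H[1]$ is isomorphic as an algebraic group to the $nl$-dimensional vector group $\mc^{nl}$. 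Finally $H(0)=\exp(\ad\sh(0))\subseteq G_l$ is a section of the projection that centralizes $H[1]$ and meets it trivially (one factor is a torus, the other unipotent), so the extension splits and $(G_l)^{h+h'}=H(0)\times H[1]=H_l$.

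Putting the pieces together gives $(G_l)^y\cong H_l\cong(\mc^*)^p\times\mc^q$ with $p=n$ the rank of $\sg$ and $q=nl$, so that $p+q=n(l+1)$, which is the asserted isomorphism. The only delicate point is the connectedness claim of the second paragraph, i.e.\ excluding spurious finite components of $(G_l)^{h+h'}$; I would handle it exactly as above, through the semidirect splitting $G_l=G(0)\ltimes G[1]$ together with the freeness of the $W$-action on regular elements, the remaining checks being routine bookkeeping with the $\ssx$-grading. I would also note that, although the identification $(G_l)^y\cong H_l$ is realized by a $y$-dependent conjugating element, the isomorphism type $(\mc^*)^p\times\mc^q$ does not actually vary with $y\in Z$.
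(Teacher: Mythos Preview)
Your argument is correct, and in fact stronger than what the paper proves. A small remark on the statement itself: as written, $(\mc^*)^p\times(\mc^*)^q$ appears to be a typo for $(\mc^*)^p\times\mc^q$; the classification of connected abelian complex algebraic groups gives a product of a torus and a vector group, and that is exactly what you produce.

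The paper takes a much shorter route. In the paragraph preceding the proposition it already asserts $(G_l)^y\cong(G_l)^{g'gy}=H_l$ and notes that this is an abelian connected algebraic group of complex dimension $n(l+1)$; the proof of the proposition then consists of a single sentence invoking the structure theorem for connected abelian complex algebraic groups. Your approach instead \emph{justifies} the equality $(G_l)^{h+h'}=H_l$ (computing the Lie algebra centralizer and handling connectedness via the semidirect splitting $G_l=G(0)\ltimes G[1]$ together with regularity of $h$ in the adjoint group $G(0)$), and then decomposes $H_l$ explicitly as $H(0)\times H[1]\cong(\mc^*)^n\times\mc^{nl}$. What this buys you is concrete values $p=n$, $q=nl$ that are visibly independent of $y\in Z$, whereas the paper's black-box appeal to the classification leaves $p,q$ formally $y$-dependent; the trade-off is a longer argument in place of a one-line citation.
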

\begin{proof}This follows from the well known classification of connected abelian complex algebraic groups.
\end{proof}

For any $y\in Z$ let
\beqs (G_l)^y_*=(G_l)^y\cup(G_l)_*\eeqs
so that $(G_l)^y_*$ is a Zariski open subset of $(G_l)^y$, which is  open and  dense in $(G_l)^y$.

\begin{remark}
By the proof for $(G_l)^y$ it is clear that $(G_l)^y\cong H_l$ if $y_0(0)$ is regular in $\sg(0)$. We do not know whether the regularity of $y_0(0)$ is a necessary condition.  However, the proof process in \cite{Kos5} in fact only requires the property of Proposition \ref{abelian-algebraic-group}. Thus we can use the corresponding results in \cite{Kos5}. Clearly $G_l$ also has the decomposition
\beqs G_l=\bigcup_{\sigma\in W}\overline{N}_ls(\sigma)H_lN_l,\eeqs
called the  Bruhat-Gelfand decomposition of $G_l$. This also provides a necessary condition which is used in the proof in \cite{Kos5}.  One refers to $\overline{N}_l\times H_l\times N_l$ the big Bruhat cell of $G_l$ as the analogue of that of $G(0)$.
\end{remark}

\subsection{}

It is known  that $(\sbo_l)^0=\sno_l$ and the sum
\beq\label{direct-sum-g=f+b}\sg=\sno_l\oplus(\ssf+\sb_l)\eeq
is direct. Let
\beq\label{projection-g-f+b}P:\sg_l\rightarrow\ssf+\sb_l,\;x\mapsto \ssf+x \eeq
be the projection on $\ssf+\sb_l$ according to the decomposition \eqref{direct-sum-g=f+b}.

\begin{prop}\label{symplectic-f+b}Regarding $\sb_l$ as a $\overline{B}_l$-manifold with respect to the coadjoint representation. Then the map
\beqs\sb_l\mapsto \ssf+\sb_l,\;x\mapsto \ssf+x\eeqs
is an isomorphism of  $\overline{B}_l$-manifolds.
\end{prop}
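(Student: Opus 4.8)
The plan is the following. The statement presupposes a $\overline{B}_l$-action on $\ssf+\sb_l$; the natural one — and, in the spirit of \cite{Kos5}, the one I take to be meant — is the \emph{modified coadjoint action} $b\bullet y=P(by)$, with $P:\sg_l\to\ssf+\sb_l$ the projection of \eqref{projection-g-f+b} (projection onto $\ssf+\sb_l$ along $\sno_l$) and $by$ the adjoint action of $b\in\overline{B}_l$ on $y\in\sg_l$. Write $\bar P=\tau_\ssf^{-1}\circ P:\sg_l\to\sb_l$ for the underlying linear projection onto $\sb_l$ along $\sno_l$, so that $P(v)=\ssf+\bar P(v)$, where $\tau_\ssf$ is the affine translation of \eqref{variety-translation}. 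I would reduce the whole proposition to a single fact — that $\bar P$ is equivariant for the adjoint action of $\overline{B}_l$ on $\sg_l$ and the coadjoint action of $\overline{B}_l$ on $\sb_l\cong(\sbo_l)^{*}$ — and then exploit that $\ssf\in\sno_l=\ker\bar P$.

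\textbf{Equivariance of $\bar P$.} Identify $\sg_l\cong\sg_l^{*}$ via $Q$; since $Q$ is invariant, this carries the adjoint action to the coadjoint action on $\sg_l^{*}$. The restriction map $\sg_l^{*}\to(\sbo_l)^{*}$ has kernel $(\sbo_l)^0=\sno_l$ (recorded just before the statement), and since $\sg_l=\sb_l\oplus\sno_l$, under $(\sbo_l)^{*}\cong\sg_l/\sno_l\cong\sb_l$ this restriction map becomes exactly $\bar P$; and restriction is $\overline{B}_l$-equivariant because $\sbo_l$, being the Lie algebra of $\overline{B}_l$, is $\overline{B}_l$-stable. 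Hence $\bar P(bv)=b\cdot\bar P(v)$ for $b\in\overline{B}_l$ and $v\in\sg_l$, where $\cdot$ denotes the coadjoint action on $\sb_l$. Equivalently, and avoiding dualizing, one checks directly that for $x\in\sbo_l$ one has $Q(\bar P(bv),x)=Q(bv,x)=Q(v,b^{-1}x)=Q(\bar P(v),b^{-1}x)=Q(b\cdot\bar P(v),x)$, using $b^{-1}x\in\sbo_l$, the identity $\sno_l=(\sbo_l)^0$, invariance of $Q$, and the definition of the coadjoint action under $(\sbo_l)^{*}\cong\sb_l$; non-degeneracy of $Q$ on $\sb_l\times\sbo_l$ then gives $\bar P(bv)=b\cdot\bar P(v)$.

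\textbf{Conclusion and obstacle.} Each $\sf_j$ lies in $\sg_{[-1]}\subseteq\sno$, so $\ssf=\sum_j\sf_j(j)\in\sno_l$ and $\bar P(\ssf)=0$; therefore, for $b\in\overline{B}_l$ and $x\in\sb_l$,
\[
b\bullet\tau_\ssf(x)=\ssf+\bar P\big(b(\ssf+x)\big)=\ssf+b\cdot\bar P(\ssf+x)=\ssf+b\cdot x=\tau_\ssf(b\cdot x).
\]
This simultaneously shows that $\bullet$ is a genuine $\overline{B}_l$-action on $\ssf+\sb_l$ — it is the coadjoint action transported along the bijection $\tau_\ssf$ — that $\tau_\ssf$ is $\overline{B}_l$-equivariant, and, $\tau_\ssf$ being a diffeomorphism of real manifolds with smooth inverse, that it is an isomorphism of $\overline{B}_l$-manifolds; as a consistency check, on $N_l\subseteq\overline{B}_l$ one has $a(\ssf+x)\in\ssf+\sb_l$ already by Theorem \ref{variety-iso-2}, so $P$ is the identity there and $\bullet$ restricts to the earlier $N_l$-action $a\cdot x=a(\ssf+x)-\ssf$. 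There is no serious obstacle: the whole argument rests on $\sno_l=(\sbo_l)^0$, the $\overline{B}_l$-stability of the ideal $\sno_l$, and $\ssf\in\sno_l$. The only point that needs attention is the bookkeeping that the $Q$-identification $(\sbo_l)^{*}\cong\sb_l$ used above is the one implicit when $\sb_l$ is called "the coadjoint $\overline{B}_l$-manifold"; the by-hand computation in the equivariance step makes this transparent. Alternatively, by the Remark after \eqref{inner-product}, this is an instance of the corresponding result of \cite{Kos5}, whose proof uses only the $Q_*$-Hilbert structure, and one may invoke it directly.
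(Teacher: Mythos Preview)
Your argument is correct and rests on the same observation as the paper's one-line proof, namely that $\ssf\in\sno_l=(\sbo_l)^0$, so $Q(\ssf+x,y)=Q(x,y)$ for all $x\in\sb_l$ and $y\in\sbo_l$; you simply unpack this in more detail, in particular making explicit the modified $\overline{B}_l$-action $b\bullet y=P(by)$ on $\ssf+\sb_l$ that the paper only writes down after the proposition. One small slip in your final aside: $N_l$ is the subgroup corresponding to $\sn_l$, not $\sno_l$, so $N_l\not\subseteq\overline{B}_l$ and there is no restriction of the $\overline{B}_l$-action $\bullet$ to $N_l$; the earlier $N_l$-action on $\sb_l$ is a genuinely different (translated adjoint) action, and this remark should simply be dropped --- it plays no role in the proof.
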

\begin{proof}It follows from the fact
\beqs Q(\ssf+x,y)=Q(x,y)\eeqs
for any $x\in \sb_l$ and $y\in\sbo_l$.\end{proof}

Proposition \ref{symplectic-f+b} implies that any orbit of $\overline{B}_l$ in $\ssf+\sb_l$ under the action
\beqs a\cdot y=Pay\eeqs
has a sympletic structure. By the definition of $Z$ as a submanifold of $\ssf+\sb_l$ one has a global coordinate system $\rho_{i,j},\gamma_{i,j}\in C^\infty(Z)$ and any $y\in Z$ is of the form
\beqs y=\ssf+\sum_{1\leq i\leq n\atop 0\leq j\leq l}\rho_{i,j}(y)h_i(j)+\sum_{1\leq i\leq n\atop 0\leq j\leq l}\gamma_{i,j}(y)e_{\alpha_i}(j),\eeqs
where $\rho_{i,j}(y)\in\mr, \gamma_{i,j}(y)\in\mr^+$.

\begin{prop}
$Z$ is an orbit of $\overline{B}_l$ in $\ssf+\sb_l$, then $Z$ has a symplectic structure given by
\beq\label{omega-Z} \omega_Z=\sum_{i=1}^n\sum_{j+s\geq l}(t_{i;j,s})d\rho_{i,j}\wedge d\gamma_{i,s},\eeq
where $T_i=(t_{i;j,s})$ is the inverse matrix of the $(l+1)\times(l+1)$-matrix
\beqs \left[t'_{i;j,s}=\gamma_{i,j+s-l}\right].\eeqs
Moreover if $\chi_{I|_Z}$ is the Hamiltonian vector field corresponding to $I_Z\in C^\infty(Z)$ for any $I\in S(\sg_l)^{G_l}$ then one has
\beqs \chi_{I|_Z}=\chi_I.\eeqs
\end{prop}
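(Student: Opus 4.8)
The plan is to split the statement into two assertions: first, the explicit formula \eqref{omega-Z} for the symplectic 2-form $\omega_Z$ on the orbit $Z$; and second, the identity $\chi_{I|_Z}=\chi_I$ relating the Hamiltonian vector field on $Z$ computed intrinsically to the restriction of the vector field $\chi_I$ on $\ssf+\sb_l$ from \eqref{vector-field-sb}. For the first part, I would start from Proposition \ref{symplectic-f+b}, which identifies $\ssf+\sb_l$ as a $\overline{B}_l$-manifold with $\sb_l$ under the coadjoint action, so that $Z$, being a single $\overline{B}_l$-orbit, carries the Kirillov--Kostant--Souriau symplectic form. Concretely, at a point $y\in Z$ the tangent space is $T_y(Z)=\sum_{i,j}\mr h_i(j)+\sum_{i,j}\mr e_{\alpha_i}(j)$ and a tangent vector is realized as $\xi\cdot y = P[\xi,y]$ (projected bracket) for $\xi\in\sbo_l$; the KKS form is $\omega_Z(\xi\cdot y,\zeta\cdot y) = Q(y,[\xi,\zeta])$. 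The key computational step is to invert the map $\sbo_l\to T_y(Z)$, i.e.\ to write down which $\xi\in\sbo_l$ produces the coordinate tangent vectors $\partial/\partial\rho_{i,j}$ and $\partial/\partial\gamma_{i,j}$. Using $\kappa(e_\alpha,e_\beta)=\delta_{\alpha,-\beta}$, $h_i=[e_{\alpha_i},e_{-\alpha_i}]$, and the pairing $Q(x(a),y(b))=\delta_{a+b,l}\kappa(x,y)$, one finds that moving in the $\gamma_{i,s}$-direction is generated (modulo $\sno_l$) by a multiple of $e_{-\alpha_i}(l-?)$ and moving in the $\rho_{i,j}$-direction by a multiple of $h_i(l-j)$; pairing these against $[\xi,\zeta]$ and using the nonabelian bracket $[\sh_l,e_{\alpha_i}]$ term produces exactly the matrix $t'_{i;j,s}=\gamma_{i,j+s-l}$ whose inverse $T_i$ appears in \eqref{omega-Z}. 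The appearance of the triangular system (the condition $j+s\ge l$) is forced by $\delta_{a+b,l}$ in the definition of $Q$, and invertibility of $[t'_{i;j,s}]$ is guaranteed because the ``diagonal'' entries are $\gamma_{i,0}\in\mr^+$ and the matrix is triangular.

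For the second part, I would argue as in \cite{Kos5}. By Theorem \ref{commutativity-invariants}, $\chi_I$ is tangent to every common level set of the invariants $J^\ssf$, $J\in S(\sg_l)^{G_l}$, restricted to $\ssf+\sb_l$; since $Z$ is cut out inside $\ssf+\sb_l$ (up to the sign/positivity conditions defining $\mr^+$) by fixing these invariants appropriately — more precisely, $Z$ is a $\overline B_l$-orbit and the $J^\ssf$ are $\overline B_l$-invariant by the coadjoint-invariance of the Casimir-type functions — the vector field $\chi_I$ is tangent to $Z$, so its restriction makes sense. One then checks that for $u\in C^\infty(Z)$ the two vector fields agree by evaluating the defining relation $(\chi_{I|_Z})u = \{I|_Z,u\}_{\omega_Z}$ and the ambient relation $((\tau_\ssf^{-1}\chi_I)v)(x)=[I^\ssf,v](x)$ together with the fact (from the proposition preceding Lemma \ref{I-y-delta}) that $\delta_{\sbo_l}$ induces on $S(\sbo_l)$ exactly the Poisson structure of the Lie algebra $\sbo_l$, which is the one whose symplectic leaves are the coadjoint orbits. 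Thus the Poisson bracket on $Z$ induced by $\omega_Z$ is the restriction of the ambient bracket on $\sb_l$, and the Hamiltonian vector fields match.

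I expect the main obstacle to be the first part: correctly identifying the generators $\xi\in\sbo_l$ for the coordinate vector fields and carrying out the bracket-and-pairing bookkeeping through the two gradings (the $t$-degree tracked by the $\delta_{a+b,l}$ in $Q$, and the principal $x_0$-grading), so that the KKS form collapses to the stated sum over $i$ and over pairs $(j,s)$ with $j+s\ge l$ with coefficients given by the inverse of $[\gamma_{i,j+s-l}]$. In particular one must check that the ``$h$ with $h$'' and ``$e$ with $e$'' pairings vanish and only the ``$h$ with $e$'' cross terms survive, which is where the concrete normalization $\kappa(h_i,\omega_j)=\delta_{i,j}$, $\kappa(e_\alpha,e_\beta)=\delta_{\alpha,-\beta}$ does the work. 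The tangency claim $\chi_{I|_Z}=\chi_I$ is then comparatively routine given Theorem \ref{commutativity-invariants} and Proposition \ref{symplectic-f+b}.
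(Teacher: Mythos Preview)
Your approach is valid but differs from the paper's in one key organizational choice, and there are two small gaps worth flagging.

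\textbf{Difference in method for $\omega_Z$.} You propose to compute the KKS form directly by first inverting the infinitesimal action map $\sbo_l\to T_y(Z)$, expressing each coordinate vector field $\partial/\partial\rho_{i,j}$, $\partial/\partial\gamma_{i,s}$ as some $\xi\cdot y$, and then pairing via $Q(y,[\xi,\zeta])$. The paper instead goes through the dual (and easier) route: it recognizes the coordinate \emph{functions} as restrictions of linear elements of $\sbo_l$, namely $\rho_{i,j}=\psi^{\omega_i(l-j)}$ and $\gamma_{i,j}=\psi^{e_{-\alpha_i}(l-j)}$, and then computes their Poisson brackets directly from the Lie bracket in $\sbo_l$. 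This yields $[\gamma_{i,j},\rho_{r,s}]=\delta_{i,r}\gamma_{i,j+s-l}$ and $[\rho,\rho]=[\gamma,\gamma]=0$ with almost no work; the matrix $[t'_{i;j,s}]$ is then the Poisson tensor in these coordinates, and one invokes a standard inversion (Proposition~6.1 of \cite{Kos5}) to obtain $\omega_Z$. Your plan reaches the same destination but front-loads the inversion into the harder step of writing coordinate vector fields as $\xi\cdot y$. Note also that the paper uses the fundamental coweights $\omega_i(l-j)$, not $h_i(l-j)$, to pick out $\rho_{i,j}$; using $h_i$ as you suggest would drag the Cartan matrix into the computation unnecessarily.

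\textbf{Gaps.} First, you assume $Z$ is a single $\overline B_l$-orbit without argument; the paper proves this by exhibiting an explicit $g=h\exp(h')\exp(x)\in\overline B_l$ carrying the base point $\ssf+\sum e_{\alpha_i}(j)$ to an arbitrary $z\in Z$. Second, in your tangency argument you assert that the $J^{\ssf}$ are $\overline B_l$-invariant, which is not what the paper establishes (Lemma \ref{invar-iso-1} gives only $N_l$-invariance), and $Z$ is not a level set of the invariants (the dimensions do not match). Your later sentence is the correct reason: $\tau_{\ssf}^{-1}\chi_I$ is the Hamiltonian vector field of $I^{\ssf}$ for the Lie--Poisson structure on $\sb_l$, hence tangent to every symplectic leaf, i.e.\ every $\overline B_l$-coadjoint orbit, in particular to $\tau_{\ssf}^{-1}(Z)$. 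With that in hand the identification $\chi_{I|_Z}=\chi_I$ follows exactly as you say (and as the paper indicates by citing Proposition~6.4 of \cite{Kos5}).
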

\begin{proof}Let $y=\ssf+{\bf e}\in Z$ where ${\bf e}=\sum_{1\leq i\leq n, 0\leq j\leq l}e_{\alpha_i}(j)$. If p
\beqs z=\ssf+\sum_{i,j}c_{i,j}h_i(j)+\sum_{i,j}a_{i,j}e_{\alpha_i}(j)\in Z,\eeqs
let $x\in\sno_l$ such that $x+c_{i,j}e_{-\alpha_i}(j)$. Furthermore, we can find real constants $d_{i,j}, 1\leq ,i\leq n, 1\leq j\leq l$ and $a_i>0, 1\leq i\leq n$ such that
\beqs a_{i,j}=a_i\sum_{s\leq 1, ks\leq j} \frac{d_{i,s}^k}{k!}\eeqs
for all $(i,j)$. Let $h'=\sum d_{i,j}\omega_i(j)$ and let $h\in H(0)$ such that $h^{\alpha_i}=a_i$.

Now let $g=h\exp(h')\exp(x)\in\overline{B}_l$. Then
\beqs gy-z\in\sno_l,\eeqs
so $g\cdot y=z$ since $\sno_l=(\sbo_l)^0$.

For any $u\in S(\sbo_l)$ let $\psi^u\in C^\infty(Z)$ be defined by $\psi^u=u|_Z$. Then one has
\beq\label{bracket-omega}[\psi^u,\psi^v]=\psi^{[u,v]} \eeq
for any $u,v\in S(\sbo_l)$.

Noting that $Q(\omega_i(j),h_r(s))=\delta_{i,r}\delta_{j,l-s}$ and $Q(x(j),h_r(s))=\delta_{j,l-s}\alpha_i(x)$ for any $x\in\sh$. Thus one has
\beqs [e_{\alpha_i}(j),\omega_r(s)]=\delta_{i,r}\delta_{j,l-s}e_{\alpha_i}(j+s).\eeqs
Furthermore one notes
\beqs \psi^{\omega_i(j)}=\rho_{i,l-j},\;\psi^{e_{-\alpha_i}(j)}=\gamma_{i,l-j}.\eeqs
Thus one has in $C^\infty(Z)$ that
\beqs [\gamma_{i,j},\rho_{r,s}]=\delta_{i,r}\gamma_{i,j+s-l}.\eeqs
Moreover one has $[\gamma_{i,j},\gamma_{r,s}]=[\rho_{i,j},\rho_{r,s}]=0$ since $[e_{-\alpha_i}(j),e_{-\alpha_r}(l)]\in[\sn_l,\sn_l]$ and $[\omega_i(j),\omega_r(s)]=0$. By Proposition 6.1 in \cite{Kos5} and \eqref{bracket-omega} we compute $\omega_Z$ in terms the coordinate system $\rho_{i,j},\gamma_{i,j}$, then it follows \eqref{omega-Z}.

The proof for the final statement is same to that of Proposition 6.4 in \cite{Kos5}.
\end{proof}

\section{Hyperbolic Toda lattices}

\subsection{}

The Hamiltonian system which is a dynamical system and completely described by a function $H(x,t)=H(p,q,t)$. This function is called a Hamiltonian.  In the mechanics background $x=(p,q)$ stands for the state  of the dynamical system, where $p, q$ are vectors of the same dimension, denoting the momentum and position, respectively.

Applying to a system of particles moving on a line with respect to some potential, we envision $n$ particles with masses $m_i, i=1,\cdots,n$.  Let $\mr^{2n}$ be the phase space with linear canonical coordinate functions  $p_i,q_j, i,j=1,\cdots,n$ so that any $x\in\mr^{2n}$ is a state of the system and $p_i(x), q_j(x)$ are the momentum and the position of the $i$th particle, respectively. A potential function $U$ is a smooth function in $C^\infty(\mr^{2n})$ which depends only on the $q$'s. The Hamiltonian $H\in C^\infty(\mr^{2n})$ of the system is given by
\beq\label{hamiltonian}H=\sum_{i=1}^n\frac{p_i^2}{2m_i}+U,\eeq
which describes the evolution in time of a mechanical system with total energy $H$. For any $x\in\mr^{2n}$ we assume that $x(t)\in\mr^{2n}$ is the state of the system at time $t$ such that $x(0)=x$. Then $x(t)$ is determined by the Hamilton's equation
\beq\label{hamiltonian-equation}\frac{dp_j(x(t))}{dt}=-\frac{\partial H}{\partial q_j}(x(t)),&&\frac{dq_j(x(t))}{dt}=\frac{\partial H}{\partial p_j}(x(t)).\eeq
In particular, when $m_i=1$ and
$$U=\sum_{i=1}^{n-1}e^{q_i-q_{i+1}}$$
the system is called the Toda lattice.

As usual one deals with the equation \eqref{hamiltonian-equation} by introducing a symplectic structure determined by a closed non-singular $2$-form
\beq\label{2-form}\omega=\sum_{i=1}^ndp_i\wedge dq_i.\eeq
This $2$-form defines a Poisson bracket $[,]$ which makes $C^\infty(\mr^{2n})$ to be a Lie algebra. In particular one has $[p_i,q_j]=\delta_{i,j}$. Regarding $p_i$ as the partial derivative $\frac{\partial}{\partial q_i}$ then $H$ becomes an elliptic differential operator on functions in the variables $q_i$ since all $m_i$ are positive.

Furthermore there exists a symmetric bilinear form $B_{H}$ such that
\beq\label{bilinear-form}B_{H}(\varphi,\psi)1=[\varphi,[\psi,H]],\;\forall \varphi,\psi\in C^\infty(\mr^{2n}).\eeq
In particular one has
\beq\label{bilinear-form-q-q}B_{H}(q_i,q_j)=\frac{\delta_{i,j}}{m_i},\eeq
which defines a positive definite symmetric bilinear form on the space $\mathcal{Q}=\sum_{i=1}^n\mr q_i$.

Now considering $\sg$ as a simple Lie algebra of rank $l$ with simple roots $\alpha_1,\cdots,\alpha_l$ (corresponding to a Cartan subalgebra $\sh$). Assume that
\beqs U=\sum_{i=1}^lr_ie^{\psi_i}\eeqs
for some positive constants $r_i$ and functions $\psi_i\in\mathcal{Q}$. We say that the system is associative to the Lie algebra $\sg$ if
\beq\label{bilinear-form-dynkin} B_{H}(\psi_i,\psi_j)=\kappa(\alpha_i,\alpha_j)\eeq
for all $i,j=1,\cdots,l$, where $\kappa$ is a positive multiple of the Killing form. In this case the Hamiltonian system sometimes is called the Bogoyavlensky-Toda lattice which was introduced by Bogoyavlensky in \cite{B}.


\subsection{} Return to the system of particles.  Now we only focus on the fundamental invariant $I=\frac12Q$ of $S(\sg_l)^{G_l}$. Explicitly one has
\beq\label{I-explicit}I=\frac12\sum_{i=1}^n\sum_{j=0}^lh_i(j)\omega_i(l-j)+\frac12\sum_{\alpha\in\Delta}^n\sum_{j=0}^le_{-\alpha}(j)e_{\alpha}(l-j).\eeq
Thus for any
\beqs y=\ssf+\sum_{1\leq i\leq n\atop 0\leq j\leq l}\rho_{i,j}h_i(j)+\sum_{1\leq i\leq n\atop 0\leq j\leq l}\gamma_{i,j}e_{\alpha_i}(j),\eeqs
one has
\beqs I(y)=\frac12Q(y,y)=\frac12\sum_{i,r=1}^n\sum_{j=0}^lc_{i,r}\rho_{i,j}\rho_{r,l-j}+\sum_{i=1}^n\sum_{j=0}^l\gamma_{i,j},\eeqs
where $(c_{i,r})$ is the Cartan matrix..

Next one considers the case  $l=1$. Thus
\beqs I(y)=\frac12\sum_{i,r=1}^n\sum_{j=0}^lc_{i,r}\rho_{i,0}\rho_{r,1}+\sum_{i=1}^n(\gamma_{i,0}+\gamma_{i,1}).\eeqs
The matrix $[t'_{i;j,s}]$ is
\beqs\left[\begin{array}{cc}0&-\gamma_{i,0}\\-\gamma_{i,0}&-\gamma_{i,1}\end{array}\right]\eeqs
and hence
\beqs T_i=\left[\begin{array}{cc}{-\gamma_{i,1}}/{\gamma_{i,0}}^2&{\gamma_{i,0}}^{-1}\\{\gamma_{i,0}}^{-1}&0\end{array}\right].\eeqs
The symplectic structure of manifold $Z$ then is given by
\beqs \omega_Z=\sum_{i=1}^n(-\frac{\gamma_{i,1}}{{\gamma_{i,0}}^2}d\rho_{i,0}\wedge d\gamma_{i,0}+\frac{1}{{\gamma_{i,0}}}d\rho_{i,0}\wedge d\gamma_{i,1}+\frac{1}{{\gamma_{i,0}}}d\rho_{i,1}\wedge d\gamma_{i,0}).\eeqs
Let $\gamma_{i,0}=\exp(\phi_{i,0}), \gamma_{i,1}/\gamma_{i,0}=\phi_{i,1}$ for some smooth functions $\phi_{i,0}, \phi_{i,1}$ where $\phi_{i,1}>0$. Then one has
\beq\label{omega-Z-2} \omega_Z=\sum_{i=1}^n(d\rho_{i,0}\wedge d\phi_{i,1}+d\rho_{i,1}\wedge d\phi_{i,0}).\eeq

The corresponding Hamiltonian is
\beq\label{takiff-hamiltonian}H=\sum_{i,j=1}^nc_{i,r}\rho_{i,0}\rho_{j,1}+\sum_{i=1}^n(1+\phi_{i,1})e^{\phi_{i,0}}.\eeq
\begin{theo}The system described by \eqref{takiff-hamiltonian} is completely integrable in the Liouville sense.
\end{theo}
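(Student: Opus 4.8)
The plan is to produce on the $2n(l+1)$-dimensional manifold $Z$ a family of $n(l+1)$ functions that pairwise Poisson commute, are functionally independent on a dense open subset, and among which (for $l=1$, $I=\frac12Q$) one finds the Hamiltonian \eqref{takiff-hamiltonian}; Liouville integrability then follows at once. First I would take
\[
\phi_{i,j}:=\psi^{{I_{i,j}}^{\ssf}}={I_{i,j}}^{\ssf}|_Z\in C^\infty(Z),\qquad 1\le i\le n,\ 0\le j\le l,
\]
where the ${I_{i,j}}$ are the generators of $S(\sg_l)^{G_l}$ from \eqref{invariants-S(g-l)}--\eqref{center-gl}; there are exactly $n(l+1)=\frac12\dim Z$ of them. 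Since $I=\frac12 Q$ is a homogeneous $G_l$-invariant of degree $2$ and, because the smallest exponent of $\sg$ is $1$ with multiplicity one, the only degree-$2$ generators among the ${I_{i,j}}$ are ${I_{1,0}},\dots,{I_{1,l}}$, the invariant $I$ is a linear combination of the ${I_{1,j}}$. Hence $\psi^{I^{\ssf}}$ — which after the substitution $\gamma_{i,0}=\exp(\phi_{i,0})$, $\gamma_{i,1}/\gamma_{i,0}=\phi_{i,1}$ is precisely the Hamiltonian \eqref{takiff-hamiltonian} when $l=1$ — is a linear combination of the $\phi_{1,j}$, so it lies in the above family.

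The two Liouville conditions are then checked as follows. Involution is essentially already available: by Theorem \ref{Possion-commute} one has $[{I_{i,j}}^{\ssf},{I_{r,s}}^{\ssf}]=0$ in $S(\sbo_l)$ for all indices, and by \eqref{bracket-omega} the restriction $u\mapsto\psi^u$ is a homomorphism of Poisson algebras, so $[\phi_{i,j},\phi_{r,s}]=\psi^{[{I_{i,j}}^{\ssf},{I_{r,s}}^{\ssf}]}=0$ on $Z$. For functional independence I would work with the Hamiltonian vector fields: by the last assertion of the Proposition establishing \eqref{omega-Z} one has $\chi_{\phi_{i,j}}=\chi_{I_{i,j}}$ with $(\chi_{I_{i,j}})_y=[y,(\delta_{\sbo_l}{I_{i,j}})(y)]$, so it suffices to show that for $y$ in a dense open subset of $Z$ the $n(l+1)$ vectors $[y,(\delta_{\sbo_l}{I_{i,j}})(y)]$ are linearly independent. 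For every $y\in Z$ the component $y_0(0)$ is a Jacobi element of $\sg(0)$, hence regular, so the centralizer $\mathfrak{z}(y)$ of $y$ in $\sg_l$ has dimension $n(l+1)$ and the differentials $(\delta {I_{i,j}})(y)$ of the algebraically independent generators form a basis of it.

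The hard part will be controlling $\mathfrak{z}(y)$ against the triangular pieces, and here I would follow \cite{Kos5}. First I would prove $\mathfrak{z}(y)\cap\sn_l=0$ for all $y\in Z$: if $z\in\sn_l$ with $[y,z]=0$, then in the $\ssx$-grading $\ssf\in{\sg_{[-1]}}_l$ is the component of $y$ of strictly least degree, while $\ad\ssf$ is injective on $\sn_l$ (since $\ad\sf_0$ is injective on $\sn$, $\sf_0$ being principal nilpotent), so inspecting the lowest graded component of $[y,z]$ forces $z=0$. Because $\ker P_{\sbo_l}=\sn_l$, it follows that $P_{\sbo_l}$ is injective on $\mathfrak{z}(y)$, so the $(\delta_{\sbo_l}{I_{i,j}})(y)$ are independent and span $V_y:=P_{\sbo_l}(\mathfrak{z}(y))\subseteq\sbo_l$ of dimension $n(l+1)$; the same fact gives $V_y\cap\mathfrak{z}(y)=\sbo_l\cap\mathfrak{z}(y)$. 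Thus the vectors $[y,(\delta_{\sbo_l}{I_{i,j}})(y)]=\ad(y)(\delta_{\sbo_l}{I_{i,j}})(y)$ are independent exactly when $\ad(y)$ is injective on $V_y$, i.e. when $\sbo_l\cap\mathfrak{z}(y)=0$; that the locus in $Z$ where this fails is nowhere dense is precisely the transversality extracted from the Bruhat--Gelfand decomposition of $G_l$ recorded in Subsection 6.3 (where the big Bruhat cell enters), by the argument of \cite{Kos5}. Granting this, the $\phi_{i,j}$ are $n(l+1)$ functionally independent integrals in involution on a dense open subset of $Z$, one of which is $H$, so the system \eqref{takiff-hamiltonian} is completely integrable in the Liouville sense. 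Equivalently, one may substitute for this last step the structural description from \cite{Kos5} that $Z$ fibers over the image of $(\phi_{i,j})\colon Z\to\mr^{n(l+1)}$ with fibers the $n(l+1)$-dimensional orbits of the centralizer group, which forces that map to be a submersion on a dense open set.
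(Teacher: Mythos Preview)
Your approach is correct and is essentially the paper's: both rest on Theorem~\ref{Possion-commute}, the Poisson commutativity of the restricted invariants. The paper's own proof is literally the single sentence ``This follows from Theorem~\ref{Possion-commute}'', so you have actually supplied more than the paper does --- the passage from $S(\sbo_l)$ to $C^\infty(Z)$ via \eqref{bracket-omega} and, especially, the functional-independence argument through centralizers and the big Bruhat cell (following \cite{Kos5} and the material of Subsection~6.3) are details the paper leaves entirely implicit.
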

\begin{proof}This follows from Theorem \ref{Possion-commute}.
\end{proof}

{\it Example 1:} (1) Let $n=1$ and denoting
\beqs \left(\frac{\partial}{\partial x}, \frac{\partial}{\partial y}, x,y\right)=\left(\frac{\partial}{\partial \phi_{1,0}}, \frac{\partial}{\partial \phi_{1,1}},\phi_{1,0}, \phi_{1,1}\right),\eeqs
 then one has
\beqs H=2\frac{\partial}{\partial x}\frac{\partial}{\partial y}+(1+y)e^x,\;(x,y)\in\mr\times\mr^+,\eeqs
which is a hyperbolic differential operator over the upper half plane.

(2) Let $n=1$ and denoting
\beqs \left(p, \overline{p}, q,\overline{q}\right)=\left(\frac{\partial}{\partial \phi_{1,0}}, \frac{\partial}{\partial \phi_{1,1}},\phi_{1,0}, \phi_{1,1}\right),\eeqs
 then one obtains
 \beqs H=2p\overline{p}+(1+\overline{q})e^q,\eeqs
which is an explicit example of \eqref{h-example-3} with $U=(1+\overline{q})e^q$. Because $\overline{q}$ denotes the difference of the positions of two particles, here the condition $\overline{q}>0$ can be understood as that the paired particles can't exchange their positions on the line.

The  differential operator corresponding to \eqref{takiff-hamiltonian} is of hyperbolic type which acts on  functions with domain $\mr^n\times(\mr^+)^n$.

In general, one considers the following system:
\beq\label{takiff-toda-1} &&H=\sum_{i=1}^nc_{i,j}p_i\overline{p_j}+U,\\
\label{takiff-toda-2}&&\frac{\partial H}{\partial p_i}=\frac{d{\overline{q}_i}}{dt},\;\frac{\partial H}{\partial \overline{p}_i}=\frac{d{q}_i}{dt},\;\frac{\partial H}{\partial q_i}=-\frac{d{p}_i}{dt},\;\frac{\partial H}{\partial \overline{q}_i}=-\frac{d\overline{p}_i}{dt},\eeq
where $\overline{q}_i>0$, $(c_{i,j})$ is the Cartan matrix of $\sg$,  the smooth functions ${p}_i, \overline{p}_i, {q}_i, \overline{q}_i\in C^\infty(\mr^{4n})$ are  a linear canonical coordinate system of $\mr^{2n}\times (\mr^n\times(\mr^+)^n)$  and the potential function $U$ is only dependent on the $q_i, \overline{q}_i$.

We refer this system to a hyperbolic Toda lattice corresponding to the Takiff algebra $\sg_1$. Of course, the invariant $I=\frac12Q$ of generalized Takiff algebra $\sg_l$ suggests a more generalized Hamiltonian system.

\subsection{} Now considering the hyperbolic Toda lattice in the above example, then one has
\beqs &&\frac{d\overline{q}}{dt}=2\overline{p},\\
 &&\frac{d^2\overline{q}}{dt^2}=\frac{2d\overline{p}}{dt}=2e^q,\\
 &&\frac{d^3\overline{q}}{dt^3}=\frac{2de^q}{dt}=4pe^q,\\
 &&\frac{d^4\overline{q}}{dt^4}=\frac{4pe^q}{dt}=4e^q\left(2p^2+\frac{dp}{dt}\right)=4e^q\left(2p^2+(1+\overline{q})e^q\right),\eeqs
and hence
\beqs \frac{d^4\overline{q}}{dt^4}\frac{d^2\overline{q}}{dt^2}=\left(\frac{d^3\overline{q}}{dt^3}\right)^2+(1+\overline{q})\left(\frac{d^2\overline{q}}{dt^2}\right)^3.\eeqs
Furthermore, it is obvious that
\beqs \frac{d}{dt}\left({\frac{d^3\overline{q}}{dt^3}}\Big/{\frac{d^2\overline{q}}{dt^2}}\right)=\frac{d}{dt}\left(\overline{q}+\frac{\overline{q}^2}2\right).\eeqs

Thus in order to solving the Hamiltonian equation, by letting  $\overline{q}=2\Psi(t)$ it suffices to solving the ordinary differential equation
\beq\label{ode} \Psi^{'''}(t)=C_0\Psi^{''}(t)+\Psi(t)^2+\Psi(t)\eeq
for any constant $C_0\in\mr$. It is known that  equation \eqref{ode} is always (at least locally) solvable for any initial values
\beqs \Psi(t_0)=C_1, \Psi^{'}(t_0)=C_2, \Psi^{''}(t_0)=C_3.\eeqs

{\it Example 2:} Now we assume that $|C_0|<1$, $t_0=0$ and
\beqs\Psi(t)=\sum_{n=0}^\infty a_nt^n\eeqs
solves \eqref{ode}. 
Then one has
\beqs a_{n+3}=\frac{1}{(n+1)(n+2)(n+3)}\left((n+1)(n+2)C_0a_{n+2}+a_n+\sum_{i=0}^{n}a_ia_{n-i}\right),\;\forall n\geq0.\eeqs
If $a_0, a_1, a_2\in(-1,1)$, then
\beqs &&|a_3|=\frac{1}{6}\left|2C_0a_2+a_0+2a_1a_2\right|<1<\frac2{1^2},\\
&&|a_4|=\frac{1}{24}\left|6C_0a_3+a_1+2a_1a_3+a_2^2\right|<\frac{10}{24}<\frac2{2^2}\\
&&|a_5|=\frac{1}{60}\left|12C_0a_4+a_2+2a_1a_4+2a_2a_3\right|<\frac{1}{6}<\frac2{3^2}.\eeqs
By induction we assume  $|a_{k+2}|<\frac{2}{k^2}$ for $1\leq k\leq n$ where $n\geq3$. Then one has
\beqs |a_{n+3}|&=&\frac{\left|(n+1)(n+2)C_0a_{n+2}+a_n+\sum_{i=1}^{n}a_ia_{n-i}\right|}{(n+1)(n+2)(n+3)}\\
&<&\frac{n+6}{(n+1)(n+2)(n+3)}<\frac2{(n+1)^2}.\eeqs
Hence for any $C_0\in(-1,1)$, the series function $\Psi(t)$ in variable $t$ absolutely  converges to a smooth function on $\mr$ such that
\beqs C_1=\Psi(0)=a_0, C_2=\Psi'(0)=a_1, C_3=\Psi''(0)=2a_2.\eeqs
Thus the initial value problem of \eqref{ode} is global solvable for bounded initial values.

Moreover, if $C_0,C_1,C_2,C_3\in\mr^+$ it follows $\Psi(t)>0$ (or equivalently $\overline{q}>0$) at any time $t>0$, this agrees with the condition of Example 1 (2). Immediately one has the following
\begin{theo}Considering the initial value problem of the dynamical system
\beq\label{dynamical-system} \frac{d{\overline{q}}}{dt}=2\overline{p},\;\frac{d{q}}{dt}=2p,\;\frac{d{p}}{dt}=-(1+\overline{q})e^q,\;\frac{d\overline{p}}{dt}=-e^q, \quad \overline{q}>0,\eeq
with initial values $c_0=\overline{q}(0), c_1=\overline{q}'(0), c_2=\overline{q}''(0), c_3=\overline{q}'''(0)$. If the initial values satisfy
\beqs c_0,c_1,\frac{c_2}2, \frac{2(c_3-c_1^2-c_1)}{c_2}\in(0,1),\eeqs
then \eqref{dynamical-system} has a unique global smooth solution on $[0,\infty)$.
\end{theo}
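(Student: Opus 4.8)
The plan is to reduce the Cauchy problem for the first-order system \eqref{dynamical-system} to the Cauchy problem for the single third-order equation \eqref{ode}, to solve the latter globally by means of the power-series estimate already established in Example 2, and then to recover $(q,\overline q,p,\overline p)$ from the scalar solution $\Psi$.

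First I would carry out the reduction of Subsection 7.3 with care. Differentiating $\overline q$ repeatedly and using \eqref{dynamical-system} at each step expresses $\overline q',\overline q'',\overline q''',\overline q''''$ as polynomials in $p$, $\overline q$ and $e^{q}$; since $\overline q''$ is a nonzero multiple of $e^{q}$ it never vanishes, so $p$ and $e^{q}$ can be eliminated, yielding the relation $\overline q''''\,\overline q''=(\overline q''')^{2}+(1+\overline q)(\overline q'')^{3}$ and, after dividing by $(\overline q'')^{2}$ and integrating once, an equation for $\overline q$ containing a single integration constant $C_{0}$ fixed by the initial data. The substitution $\overline q=2\Psi$ then puts this in the normal form \eqref{ode}. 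Read in reverse, the same computation shows that $\Psi$ determines $\overline q=2\Psi$, then $q$ from $\overline q''$ (a fixed multiple of $e^{q}$), then $\overline p=\tfrac12\overline q'$ and $p$ from $\overline q'''/\overline q''$, and that this quadruple solves \eqref{dynamical-system}; hence solutions of the two Cauchy problems correspond bijectively.

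Next I would check that the dictionary of the previous step turns the hypotheses into those of Example 2. The constant $C_{0}$ and the Cauchy data $\Psi(0),\Psi'(0),\Psi''(0)$ are explicit functions of $c_{0},c_{1},c_{2},c_{3}$, and the four conditions $c_{0},c_{1},c_{2}/2,\ 2(c_{3}-c_{1}^{2}-c_{1})/c_{2}\in(0,1)$ say precisely that $|C_{0}|<1$ and that $C_{0},\Psi(0),\Psi'(0),\Psi''(0)$ are positive and lie in the admissible range of Example 2 (with room to spare, e.g. $c_{0}\in(0,1)$ forces $\Psi(0)\in(0,1)\subset(-1,1)$). Example 2 then provides a smooth — in fact real-analytic — solution $\Psi$ of \eqref{ode} on all of $\mr$, in particular on $[0,\infty)$, together with $\Psi>0$ there; a short bootstrap using $C_{0}>0$ upgrades this to $\Psi,\Psi',\Psi''>0$ on $[0,\infty)$ (if $\Psi''$ first vanished at $t_{*}>0$ then $\Psi'''=C_{0}\Psi''+\Psi^{2}+\Psi>0$ on $[0,t_{*}]$, so $\Psi''$ would be strictly increasing there, contradicting $\Psi''(t_{*})=0<\Psi''(0)$). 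Consequently $q=\log(\mathrm{const}\cdot\overline q'')$ is defined and smooth on $[0,\infty)$, the reconstructed $(q,\overline q,p,\overline p)$ is smooth there, and $\overline q=2\Psi>0$, so the constraint $\overline q>0$ holds. Uniqueness follows because the first-order system equivalent to \eqref{ode} has polynomial, hence locally Lipschitz, right-hand side, so Picard--Lindel\"of applies and any solution of \eqref{dynamical-system} yields, through the reduction, this same $\Psi$.

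The analytic heart of the argument — global convergence of the series for $\Psi$ — is already in place via Example 2, so the hard part will be the bookkeeping of the reduction step: verifying that eliminating $p$ and $e^{q}$ produces exactly one free constant $C_{0}$ with no loss of solutions, that the reconstruction map $\Psi\mapsto(q,\overline q,p,\overline p)$ is globally well defined and $C^{\infty}$ (which is where $\Psi''>0$ is used), and that the explicit dictionary between $(c_{0},c_{1},c_{2},c_{3})$ and $(C_{0},\Psi(0),\Psi'(0),\Psi''(0))$ matches the four inequalities in the statement.
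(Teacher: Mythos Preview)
Your proposal is correct and follows exactly the route the paper takes: the reduction of Subsection~7.3 to the scalar equation \eqref{ode}, the power-series estimate of Example~2 for global existence and positivity, and then the claim of the theorem. The paper records no proof beyond ``Immediately one has the following,'' so your write-up is simply a fleshed-out version of the intended argument; your extra care about the positivity of $\Psi''$ (needed for the logarithm in the reconstruction of $q$) and about uniqueness via Picard--Lindel\"of fills in points the paper leaves implicit.
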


\section*{Acknowledgments}
The author gratefully acknowledge partial financial supports from the National Natural Science Foundation of China (12171155).

{\bf Data Availability Statement}: Not applicable.

{\bf Conflicts of Interest}: The author declared that he has no conflict of interest to this work.

\def\refname{\centerline{\bf REFERENCES}}

\end{document}